\documentclass[11pt,a4,leqno]{amsart}
\usepackage{amsthm}
\usepackage{amsmath}
\usepackage{amssymb}
\usepackage{amsfonts}
\usepackage{mathrsfs}
\usepackage{graphicx}
\usepackage{bbm}
\usepackage[colorlinks=true, citecolor=blue, linkcolor=blue, urlcolor=red]{hyperref}
\usepackage{color}
\usepackage[all]{xy}
\usepackage{comment}
\usepackage{here}
\usepackage{amscd}
\usepackage{caption}
\usepackage{listings}
\usepackage{setspace}
\usepackage{booktabs}
\usepackage{parskip}
\usepackage{bbm}
\usepackage[dvipsnames]{xcolor}
\allowdisplaybreaks
\usepackage[margin=1in]{geometry}

\newcommand{\mathsym}[1]{{}}
\newcommand{\unicode}[1]{{}}

\makeatletter

\@addtoreset{equation}{section}
\makeatother

\makeatletter
\@namedef{subjclassname@2020}{%
  \textup{2020} Mathematics Subject Classification}
\makeatother

\newtheoremstyle{my theoremstyle}
{1.0em}                    
    {1.0em}                    
    {\itshape}                   
    {}                           
    {\scshape}                   
    {.}                          
    {.5em}                       
    {}  

\newtheoremstyle{dfn}
{1.0em}                    
    {1.0em}                    
    {}                   
    {}                           
    {\scshape}                   
    {.}                          
    {.5em}                       
    {}  
    
\theoremstyle{my theoremstyle}
   \newtheorem{thm}{Theorem}[section]
   \newtheorem{lem}[thm]{Lemma}
   \newtheorem{prop}[thm]{Proposition}
   \newtheorem{cor}[thm]{Corollary}
   \newtheorem{conj}[thm]{Conjecture}
\theoremstyle{dfn}
   \newtheorem{dfn}[thm]{Definition}
   
\theoremstyle{remark}   
   
   \newtheorem{rmk}[thm]{{\scshape Remark}}

\newcommand{\Z}{\mathbb{Z}}
\newcommand{\Q}{\mathbb{Q}}
\newcommand{\R}{\mathbb{R}}
\newcommand{\C}{\mathbb{C}}

\newcommand{\e}{\varepsilon}
\newcommand{\z}{\zeta}
\newcommand{\al}{\alpha}

\numberwithin{equation}{section}

\newcommand{\Spec}{\operatorname{Spec}}

\newcommand{\ord}{\operatorname{ord}}

\newcommand{\dlog}{\operatorname{dlog}}
\newcommand{\darg}{\operatorname{darg}}

\date{\today}

\begin{document} 
\title{Elements in $K_4$ and regulator maps of Fermat curves}
\author{Fran\c{c}ois Brunault}
\author[David Lilienfeldt]{David T.-B. G. Lilienfeldt}
\author{Yusuke Nemoto}
\date{\today}
\address{FB: Unité de mathématiques pures et appliquées (UMPA), ENS de Lyon, 46 allée d'Italie, 69007 Lyon, France.}
\email{francois.brunault@ens-lyon.fr}
\address{DL: Centre de Mathématiques Laurent Schwartz (CMLS), CNRS, École polytechnique, Institut Polytechnique de Paris, Palaiseau, France.} 
\email{david.lilienfeldt@polytechnique.edu}
\address{YN: Graduate School of Science, Chiba University, 
Yayoicho 1-33, Inage, Chiba, 263-8522, Japan.}
\email{y-nemoto@waseda.jp}
\subjclass[2020]{11G55, 11G40, 11D41}
\keywords{Fermat curves, algebraic $K$-theory, $L$-functions, Beilinson's conjecture, polylogarithms.}

\begin{abstract}
    We construct explicit elements in the group $K_4^{(3)}$ of the Fermat curves $x^N+y^N=1$ for all $N\geq 3$. The construction, which is uniform in $N$, uses polylogarithmic complexes and a map of de Jeu to $K$-theory. We prove that the elements are non-trivial by showing that their images under Beilinson's regulator map are non-zero. Notably, we obtain explicit formulas for their regulator integrals involving special values of Zagier's trilogarithm function. As a corollary, we show that these regulator integrals are asymptotic to $\frac{3}{2}\zeta(3)N^2$ as $N\to +\infty$. Moreover, we derive formulas for the regulators of our elements in terms of hypergeometric functions, generalizing results of Otsubo for $K_2$ groups of Fermat curves. Finally, we numerically verify some cases of Beilinson's conjectures on special values of $L$-functions at $s=3$ for $N\in \{ 3,4,6 \}$. 
\end{abstract}

\maketitle

\tableofcontents

\section{Introduction}

For any positive integer $N$, we let $X_N$ denote the (plane, smooth, projective) Fermat curve of degree $N$ over $\Q$ with the affine equation
$x^N+ y^N=1.$ In this paper, we are interested in the third Adams eigenspace $K_4^{(3)}(X_N)$ of $K_4(X_N)\otimes \Q$ 
and its relation to $L$-values in connection with Beilinson's conjecture \cite{Bei80}. 
This work can be viewed as a generalization of works of Ross \cite{Ross94} and Otsubo \cite{Ots11} on $K_2$ of Fermat curves, and it is the first to tackle Beilinson's conjecture for general Fermat curves beyond $K_2$. 

Recall that the group $K_2(\Q(X_N))$ can be described, using Matsumoto's theorem, as
\[
K_2(\Q(X_N)) \simeq \frac{\Q(X_N)^\times \otimes_{\Z} \Q(X_N)^\times}{\langle f \otimes (1-f) \colon f\in \Q(X_N)\setminus \{ 0,1 \} \rangle}.
\]
The group $K_2(X_N) \otimes \Q$ can then be identified with the subgroup of $K_2(\Q(X_N)) \otimes \Q$ obtained by taking the kernel of the tame symbols (see \cite[Section 1]{Ross94} for details). Moreover, we have $K_2(X_N) \otimes \Q = K_2^{(2)}(X_N)$ (see \cite[Section 7.4]{Nek94}) .

The description of the $K_4$ group of $X_N$ is more involved. However, it is expected that $K_4(X_N)\otimes \Q = K_4^{(3)}(X_N)$
\footnote{In motivic cohomology notations, we have $K_4^{(n)}(X_N) = H^{2n-4}_{\mathcal{M}}(X_N, \Q(n))$. According to Beilinson, the spectral sequence for \textrm{Ext} groups in the conjectural category of mixed motives over $\Q$ degenerates to motivic cohomology \cite[Section 3.6]{Nek94}. In this category, Beilinson conjectures that $\mathrm{Ext}^i$ vanishes for $i>1$. This gives rise to short exact sequences as explained in \emph{loc.~cit.}, and would imply that $H^{2n-4}_{\mathcal{M}}(X_N, \Q(n)) = 0$ for $n \neq 3$.}
and a conjectural description of $K_4^{(3)}$ of fields close in spirit to Matsumoto's theorem has been proposed by Goncharov \cite{Goncharov} and de Jeu \cite{Jeu95,Jeu96} using polylogarithmic complexes. De Jeu constructed a map \cite{Jeu96} from the cohomology in degree $2$ of his weight $3$ polylogarithmic complex to $K_4^{(3)}$ of the given field. This map enables us to construct explicit elements  \footnote{De Jeu's map is defined up to a universal sign, so that our $K_4$ elements are only defined up to this sign. This is the reason for the sign ambiguity in our regulator formulas.}
\[
\Xi_{N}:=\Xi_{N}^{-2,1}\in K_4^{(3)}(X_N)
\] 
(see Corollary \ref{cor:K4open} and Theorem \ref{prop:res1}). 
The superscript $(-2,1)$ relates to a specific submotive of $h^1(X_N)$ to which the element belongs (see Corollary \ref{cor:pNrs}). The main theorem is the following. 

\begin{thm}\label{thm1}
    For all $N\geq 3$, the element $\Xi_N$ is non-trivial in $K_4^{(3)}(X_N)$.
\end{thm}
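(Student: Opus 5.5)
The plan is to prove non-triviality by showing that the image of $\Xi_N$ under Beilinson's regulator map
\[
r_{\mathcal D}\colon K_4^{(3)}(X_N)\to H^2_{\mathcal D}(X_{N,\R},\R(3))\simeq \bigl(H^1(X_N(\C),\R(2))\bigr)^{+}
\]
is non-zero, since a non-zero image forces a non-zero element. Concretely, I will pair $r_{\mathcal D}(\Xi_N)$ against a suitable $1$-cycle $\gamma$ on $X_N(\C)$ and show the resulting regulator integral is non-zero. Equivalently, one can pair against a holomorphic or antiholomorphic $1$-form, using the $(-2,1)$-eigencomponent.

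\textbf{Step 1: the regulator form.} I will take $\Xi_N$ to be represented in de Jeu's weight $3$ complex by a cocycle $\sum_i [f_i]_2\otimes g_i$, with $f_i,g_i$ built from $x,y$ and $N$-th roots of unity. Goncharov's explicit description of the regulator on this complex (compatible with de Jeu's map up to the universal sign) gives a real-analytic $1$-form $\eta$ on the complement of the supports. It has the shape
\[
\eta=\sum_i\Bigl( D_2(f_i)\,\darg g_i-\tfrac13\log|g_i|\,\bigl(\log|1-f_i|\,\dlog|f_i|-\log|f_i|\,\dlog|1-f_i|\bigr)\Bigr),
\]
up to normalisation, with $D_2$ the Bloch--Wigner function. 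The cocycle condition makes $\eta$ closed. Since the element extends to $X_N$ (Theorem~\ref{prop:res1}), the class of $\eta$ is the regulator.

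\textbf{Step 2: choice of cycle and computation.} I will integrate $\eta$ along a cycle $\gamma$ lifted from a path on which the functions are real. A natural choice is the real segment $0\le x\le 1$, $y=(1-x^N)^{1/N}$, composed with automorphisms $(x,y)\mapsto(\zeta x,\zeta' y)$ to close it up. I would then twist by the character $(-2,1)$ so that only the relevant eigencomponent survives. On real loci the $D_2\,\darg$ terms vanish or become explicit, and the integral reduces to iterated integrals of logarithms. After the substitution $t=x^N$, these become single-valued trilogarithm values $\mathcal{L}_3$ (Zagier's trilogarithm) at arguments such as $1-\zeta_N^k$ or $\zeta_N^k$, summed against characters. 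This should give a closed formula of the form $\pm\sum_k c_k\,\mathcal{L}_3(\cdot)$.

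\textbf{Step 3: non-vanishing (main obstacle).} Showing that this explicit combination of trilogarithm values is non-zero for every $N\ge3$ is the hardest step. I would first isolate the dominant term by an asymptotic analysis: group the sum as a Riemann sum in $k/N$ and use $\mathcal{L}_3(1)=\zeta(3)$ together with smoothness of $\mathcal{L}_3$ on the circle. The aim is to show the total is $\tfrac32\zeta(3)N^2+O(N)$ with an explicit error constant, which gives non-vanishing for all $N\ge N_0$. The remaining finitely many $N<N_0$ would be checked by rigorous numerical evaluation, for which the rapidly convergent series for $\mathcal{L}_3$ make interval bounds easy. The delicate part is making the error term effective, with a small enough $N_0$. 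If the direct bound fails, the fallback is to pair instead with the holomorphic form of type $(-2,1)$ and use the hypergeometric expression to get a sign-definite integrand.
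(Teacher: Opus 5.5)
Your overall strategy (show the regulator is non-zero by integrating Goncharov's form over a cycle built from the real path $\delta_N$ and its $G_N$-translates, then extract $\mathcal{L}_3$-values) is the one the paper uses. As written, however, there is a genuine gap. Step 2, where all the content lies, is not carried out, and the closed form you anticipate is not what comes out, so Step 3 is aimed at an expression that does not exist.

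Suppose you integrate the regulator forms of $\xi_N^{-2,1}$ and its twists directly along $\delta_N$. The second argument is then $\zeta\,(1-t)^{1/N}t^{-2/N}$, whose modulus sweeps all of $]0,\infty[$; for $\zeta=1$, $1-y/x^2$ even vanishes on the path at $t=(\sqrt5-1)/2$. Nothing in your sketch turns this into values of $\mathcal{L}_3$ at $\zeta_N^k$ or $1-\zeta_N^k$.

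The missing idea is to use the automorphism $\tau$. By Proposition~\ref{cor:pb}, $\tau^*\xi_N^{-2,1}=-\xi_N^{1,1}(\e)$ with $\e^N=-1$, so it suffices to show $r_3(2)(\xi_N^{1,1}(\e))\neq0$. On $\delta_N$ the argument becomes $\e xy=\e\,(t(1-t))^{1/N}$, which has modulus at most $(1/4)^{1/N}<1$ and is symmetric under $t\leftrightarrow1-t$. Then:
\begin{itemize}
\item all $D_2\,\darg$ terms vanish;
\item the average over twists coming from the definition of $\gamma_N$ (Lemma~\ref{lem:reg_xi1}) collapses via $\prod_r(1-\e\zeta_N^r w)=1+w^N$;
\item after folding onto $[0,1/2]$, the substitution $v=(t(1-t))^{1/N}$ yields exactly $\frac1N\mathcal{L}_3(-1/4)-N^2\mathcal{L}_3(\e(1/4)^{1/N})$.
\end{itemize}
The factor $N^2$ comes from $\log(v^N)=N\log v$. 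The leading constant comes from the single value $-\mathcal{L}_3(-1)=\frac34\zeta(3)$ (doubled when passing to $r_B$), not from a Riemann sum over $k$.

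What is left over, $\frac23\int_0^1\bigl(f_{N,\e}(t)-f_{1,-1}(t)/N\bigr)\,dt$, is not a trilogarithm value at all and must be bounded separately. One has $f_{N,\e}<0$, decreasing in $N$, so $\frac23\int_0^1 f_{N,\e}>-\log(2)\pi^2/9$, while $-f_{1,-1}>0$.

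With this formula in hand, the non-vanishing argument also differs from your Step 3.
\begin{itemize}
\item For odd $N$ (take $\e=-1$), $G_{-1}(x)=-3\mathcal{L}_3(-x)$ is increasing on $]0,1[$ and $(1/4)^{1/N}>1/2$. This gives a positive lower bound for all odd $N\geq3$ at once, with only $G_{-1}(1/2)$ and $G_{-1}(1/4)$ evaluated numerically; no effective $N_0$ is needed.
\item For even $N$, where $\e(1/4)^{1/N}$ is not real, the paper avoids estimating altogether. By Proposition~\ref{dist:cocycle} and Proposition~\ref{prop:trace}, $(1+g_N^{0,N/2})\,\Xi_N^{-2,1}=\pi_{N,N/2}^*\Xi_{N/2}^{-2,1}$. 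Since $(\pi_{N,N/2})_*\pi_{N,N/2}^*$ is multiplication by the degree, $\pi_{N,N/2}^*$ is injective, so vanishing for $N$ forces vanishing for $N/2$. Descending reduces to odd $N\geq3$ or $N=4$, and the case $N=4$ is settled by the numerical value of $R_4$.
\end{itemize}
Your asymptotic-plus-finite-check route would also work once the formula is known. The error term is even $O(1)$, since $\mathcal{L}_3(1/\bar z)=\mathcal{L}_3(z)$ and $\mathcal{L}_3(\bar z)=\mathcal{L}_3(z)$ make $-1$ a critical point of $\mathcal{L}_3$. But this only becomes available after the explicit computation above, including a bound on the non-trilogarithmic integral term.
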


This result is reminescent of non-triviality results for the Ross element $\{ 1-x, 1-y\}\in K_2^{(2)}(X_N)$ \cite[Theorem 1]{Ross94}. 
Our construction of the elements $\Xi_N$, which is detailed in Section \ref{s:2coc}, is inspired by a construction of de Jeu \cite{JeuSlides} for a CM elliptic curve of conductor $27$ that is $3$-isogenous to $X_3$ (see Remark \ref{rmk:jeuX3}).

Theorem \ref{thm1} provides non-trivial $K_4$ elements for an infinite family of curves indexed by the degree~$N$, and the construction is uniform in $N$. This is similar to the construction by the first-named author of $K_4$ elements for the family of modular curves $X(N)$ and $X_1(N)$ \cite{Bru20}. 

One can hope to use automorphisms of $X_{N, \overline{\Q}}:=X_N  \times_{\Q} \overline{\Q}$ to produce more $K_4$ elements. An obvious symmetry defined over $\Q$ is given by the map
$
    \iota \colon (x,y) \mapsto (y, x).
$
Let $\e$ denote a $2N$-th root of unity satisfying $\e^N=-1$. Another automorphism, defined this time over $\Q(\e)$, is given by 
$
\tau_\e \colon (x,y) \mapsto \left(\frac{1}{x}, \frac{\varepsilon y}{x} \right).
$
The following result is Corollary \ref{cor:indep} in the main text. 

\begin{thm}\label{thm2}
    If $N\geq 3$ is not divisible by $3$, then the elements $\Xi_N, \iota^*(\Xi_N)$, $\tau_{\e}^*(\Xi_N)$ are non-zero and linearly independent in $K_4^{(3)}(X_{N,\Q(\e)})$, where $X_{N,\Q(\e)}:=X_N  \times_{\Q} \Q(\e)$.
\end{thm}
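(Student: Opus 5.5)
Linear independence will be detected through Beilinson's regulator, since the regulator is a group homomorphism. Compose it with integration against cycles: for a class $\gamma\in H_1(X_N(\C),\Z)$ (or a suitable family of cycles), set $\gamma\mapsto\int_\gamma \mathrm{reg}(\xi)$. It then suffices to find three cycles $\gamma_1,\gamma_2,\gamma_3$ for which the $3\times 3$ matrix $\bigl(\int_{\gamma_i}\mathrm{reg}(\sigma^*\Xi_N)\bigr)$, with $\sigma\in\{\mathrm{id},\iota,\tau_\e\}$, is invertible. Non-vanishing of each element is then automatic: it follows from Theorem \ref{thm1} together with functoriality, because $\iota$ and $\tau_\e$ are automorphisms. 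The regulator may equally be viewed as taking values in $H^1(X_N(\C),\R(2))$, so it is enough to pair with differential forms or cycles.

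\textbf{Transfer to the automorphisms.} The regulator commutes with pullback, so
\[
\int_{\gamma}\mathrm{reg}(\sigma^*\Xi_N)=\int_{\sigma_*\gamma}\mathrm{reg}(\Xi_N).
\]
We therefore only need the regulator integrals of $\Xi_N$ itself over the cycles $\sigma_*\gamma_i$. These are the explicit formulas from the main text, expressed via Zagier's trilogarithm. The natural cycles are the Pochhammer-type or real-locus paths, for instance the path $0\le x\le 1$ with $y$ real and its images under $(x,y)\mapsto(\zeta x,\zeta' y)$ for $\zeta,\zeta'\in\mu_N$. Alternatively, one can work through the character decomposition: $\Xi_N$ lies in the $(-2,1)$-isotypic part of $h^1(X_N)$ (Corollary \ref{cor:pNrs}). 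Then $\iota^*\Xi_N$ lies in the $(1,-2)$-part, and $\tau_\e^*\Xi_N$ lies in the part indexed by the pair obtained from $(-2,1)$ under the $S_3$-action permuting $(r,s,t)$ with $r+s+t\equiv 0$. That pair is $(1,1)$ or similar.

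\textbf{Key step.} The cleanest route is to show that the three elements lie in pairwise distinct $\mu_N\times\mu_N$-eigenspaces (after base change to $\Q(\e)$, or to $\Q(\mu_{2N})$), together with their Galois conjugates. Nonzero vectors in distinct isotypic components are automatically linearly independent. The triple $(r,s,t)=(-2,1,1)$ has orbit under the $S_3$ symmetry $\{(-2,1),(1,-2),(1,1)\}$. These are distinct modulo $N$ exactly when $-2\not\equiv 1\pmod N$, that is, when $3\nmid N$. This explains the hypothesis. I must check carefully that $\tau_\e$ normalizes the $\mu_N^2$-action and transforms characters as claimed. Twisting by $\e$ should only introduce a scalar and not change the eigen-character.

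\textbf{Main obstacle.} Each component also has a Galois-orbit structure over $\Q$, since the elements are rational only over $\Q(\e)$. I have to make sure that the three projections generate independent subspaces rather than merely nonzero vectors. For this I would refine the argument to characters of $\mu_N^2$ acting over $\overline{\Q}$. The Galois orbit of $(-2,1)$ is $\{(-2a,a):a\in(\Z/N)^\times\}$, and the other two orbits are $\{(a,-2a)\}$ and $\{(a,a)\}$. These orbits are disjoint iff $3\nmid N$. Indeed, $(-2a,a)=(b,-2b)$ forces $a=-2b$ and $b=-2a=4b$, so $3b\equiv0$. Once disjointness is checked, nonvanishing from Theorem \ref{thm1} finishes the proof. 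If the isotypic route fails, the fallback is to compute the $3\times3$ determinant numerically or asymptotically, using the $\frac32\zeta(3)N^2$ asymptotics.
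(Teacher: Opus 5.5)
Your isotypic argument is correct, and it is the alternative proof the paper records in the remark after Corollary \ref{cor:indep}: Corollary \ref{cor:pNrs} combined with Theorem \ref{thm:nontrivial}. The paper's main proof separates the characters at the level of regulators instead. Via Proposition \ref{cor:pb} and the explicit formula of Theorem \ref{thm:reg_hyperg}, the regulators of $\Xi_N^{1,1}(\e)$, $\Xi_N^{-2,1}$, $\Xi_N^{1,-2}$ are combinations of $\omega_N^{a,a}$, $\omega_N^{N-2a,a}$, $\omega_N^{a,N-2a}$ respectively, and these families are independent when $3\nmid N$. That route gives independence already of the regulator images. Yours needs only Proposition \ref{projection} and the non-vanishing of a single element, transported by the automorphisms $\iota$ and $\tau_\e$.

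Two points in your write-up need fixing.

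\textbf{The support, and where $3\nmid N$ enters.} By Proposition \ref{projection}, $\Xi_N$ has components $p_N^{-2b,b}\Xi_N$ for every $b\in\Z/N\Z$, not only for $b\in(\Z/N\Z)^\times$; the non-unit $b$ give the old parts coming from $X_{N/d}$. Likewise $\iota^*\Xi_N$ is supported on $\{(b,-2b)\}$. Also $\tau_\e^*\Xi_N=-\Xi_N^{1,1}(\e)$ by Proposition \ref{cor:pb}(iii), so it is supported on $\{(b,b)\}$, again for all $b$; this settles your concern about how $\tau_\e$ and the twist by $\e$ act on characters.

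You claim that $(-2,1),(1,-2),(1,1)$ are distinct iff $3\nmid N$, and that the unit orbits are disjoint iff $3\nmid N$. Both claims confuse $N\mid 3$ with $3\mid N$: the unit orbits are in fact disjoint for every $N\neq 3$. The hypothesis $3\nmid N$ is needed precisely for the non-unit indices. For example, when $N=6$ all three supports contain $(2,2)$ and $(4,4)$, the old part coming from $X_3$. Your computation forcing $3b\equiv 0$ is valid for arbitrary $b$, so the argument goes through once the full support is used.

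\textbf{Non-vanishing after base change.} Theorem \ref{thm1} gives non-vanishing over $\Q$. To pass to $K_4^{(3)}(X_{N,\Q(\e)})$, use that base change along $f\colon X_{N,\Q(\e)}\to X_N$ is injective rationally, since $f_*f^*=[\Q(\e):\Q]$. Alternatively, invoke Theorem \ref{thm5} directly.
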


\subsection{Regulator maps}

We prove Theorems \ref{thm1} and \ref{thm2} by studying the images of our elements under Beilinson's regulator map \cite{Bei80,Nek94}
\begin{equation*}
    r_B : K_4^{(3)}(X_N) \longrightarrow H^1(X_N(\C), \R(2))^+,
\end{equation*}
where $+$ denotes the invariants with respect to the complex conjugation acting on the complex points $X_N(\C)$ and on the coefficients. 

As before, we let $\e$ denote a $2N$-th root of unity with $\e^N=-1$. 
In order to prove Theorem \ref{thm1}, we show that $r_B ((\tau_\e)^*(\Xi_N))$ is non-zero. To do this, we integrate this cohomology class over a closed path $\gamma_N$ in $X_N$ defined by \eqref{def:gammaN} whose homology class generates $H_1(X_N(\C),\Q)$ as a cyclic $\Q[(\Z/N\Z)^{\oplus 2}]$-module. The precise formula is the following. 

\begin{thm}\label{thm3}
     For all $N\geq 3$, we have the equality (up to sign)
    \begin{equation*}
    \int_{\gamma_N} r_B ((\tau_\e)^*(\Xi_N))=\frac43  \int_0^1 \left(f_{N, \e}(t) - \frac{f_{1,-1}(t)}{N}\right) dt
      + \left(\frac1N \mathcal{L}_3(-1/4)-N^2 \mathcal{L}_3(\e (1/4)^{1/N})\right),
\end{equation*}
where $\mathcal{L}_3$ is Zagier's trilogarithm \eqref{GonchL3} and, for $x\in [0,1[$ and $z\in S^1$, the function $f_{N, z}$ is given by
\begin{equation*}
    f_{N,z}(x):=\Re(\log(1- z (x(1-x))^{1/N})) \frac{\log(1-x)}{x}.
\end{equation*}
\end{thm}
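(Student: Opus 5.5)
We will compute $\int_{\gamma_N} r_B(\tau_\e^*\Xi_N)$ in three steps: an explicit Goncharov-type form representing the regulator, a Stokes argument that isolates trilogarithm values at the boundary, and the evaluation of the remaining one-dimensional integral.

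\textbf{Step 1: an explicit representative.} By construction (Section \ref{s:2coc}), $\Xi_N$ is the image under de Jeu's map of a cocycle in the weight $3$ polylogarithmic complex. Such a cocycle is a sum $\sum_i a_i [f_i]_2\otimes g_i$ with $f_i,g_i\in\Q(X_N)^\times$, whose differential in $\bigwedge^3\Q(X_N)^\times_\Q$ vanishes. For elements of this type, the regulator is known to be represented, up to the universal sign, by Goncharov's closed $1$-form. Writing $\widehat{L}_2$ for the Bloch--Wigner dilogarithm, this form is
\[
\omega = \sum_i a_i\Big(\widehat{L}_2(f_i)\,\darg g_i - \tfrac13 \log|f_i|\log|1-f_i|\,\dlog|g_i|\ \text{(up to the standard antisymmetrization)}\Big).
\]
The cocycle condition makes $\omega$ closed on the complement of the supports. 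It is exact near the points where the cocycle has residues, so the cohomology class is well defined. Pulling back by $\tau_\e$ just replaces $f_i,g_i$ by $f_i\circ\tau_\e$ and $g_i\circ\tau_\e$. We will therefore write down $\tau_\e^*\Xi_N$ explicitly in terms of functions such as $x$, $y$, $1-x$, and $\e y/x$.

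\textbf{Step 2: parametrize $\gamma_N$ and apply Stokes.} Using \eqref{def:gammaN}, we parametrize $\gamma_N$ by $t\in[0,1]$. For example, take $x=t^{1/N}$ and $y=(1-t)^{1/N}$, with one real branch followed by the return branch twisted by roots of unity. Then $x^N=t$ and $y^N=1-t$, and $\tau_\e$ sends $(x,y)$ to a point where $\e y/x$ has $N$-th power $-(1-t)/t$. Along this path the relevant functions $f_i$ are either real or of the form $1-z(t(1-t))^{1/N}$. The key step is to use the trilogarithm relation
\[
d\mathcal{L}_3(f)=\widehat{L}_2(f)\,\darg f + (\text{terms with } \log|f|^2\,\darg\ldots),
\]
which says that Zagier's $\mathcal{L}_3$ is a primitive of the Goncharov form for $\{f\}_3$-type pieces. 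We then integrate by parts to rewrite the $\widehat L_2\,\darg$ parts as boundary terms. The boundary points are $t=0,1$ and the midpoint $t=1/2$, where $t(1-t)=1/4$. This should produce exactly the values $\mathcal{L}_3(\e(1/4)^{1/N})$ and $\mathcal{L}_3(-1/4)$. The factors $N^2$ and $1/N$ come from multiplicities: $\log$ of an $N$-th root rescales by $1/N$, and a function raised to the $N$-th power contributes through $\mathcal{L}_3(f^N)$ versus $N^2\sum\mathcal{L}_3(\zeta f)$ distribution relations.

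\textbf{Step 3: the remaining integral.} What is left is a term of the form $\log|1-z(t(1-t))^{1/N}|\,\log|1-t|\,dt/t$, coming from $\log|f|\log|g|\,\dlog$ pieces with $g=x^N=t$. This term gives $\frac43\int_0^1 f_{N,\e}$. The subtracted $f_{1,-1}/N$ is the analogous contribution of the $N=1$ piece arising from the $\mathcal{L}_3(-1/4)$ part of the cocycle. Here the coefficient $4/3$ would come from combining the $1/3$ in Goncharov's form with the symmetry $t\leftrightarrow 1-t$ of the path.

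\textbf{Main obstacle.} The main difficulty will be the bookkeeping in Step 2. We must choose the right branches along the pulled-back path, and check that the boundary and singular contributions at $t=0,1$ vanish. Those are the points where $x$ or $y$ has a zero and the $\log$ terms blow up only logarithmically. We must also verify the coefficients $4/3$, $N^2$ and $1/N$. We would handle the limits first by expanding near $t=0$ to show integrability, and then track the coefficients by testing the formula numerically for $N=3$.
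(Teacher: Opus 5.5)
Your outline is not yet a proof. The steps that carry the content are asserted (``this should produce exactly the values'') rather than computed, and several of the mechanisms you guess are not the ones at work. Compared with the paper's route:

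(1) The pull-back is exact: $\tau_\e^*\xi_N^{-2,1}=-\xi_N^{1,1}(\e)$, since $y/x^2$ pulls back to $\e xy$.

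(2) $\gamma_N$ is not one path with a twisted return branch; it is the combination \eqref{def:gammaN}. Because $(g_N^{a,b})^*\xi_N^{1,1}(\z)=\xi_N^{1,1}(\z\zeta_N^{a+b})$, the integral becomes $\int_{\delta_N}$ of $\xi_N^{1,1}(\e)-\frac1N\sum_r\xi_N^{1,1}(\e\zeta_N^r)$ (Lemma \ref{lem:reg_xi1}). This average, through $\prod_r(1-\e\zeta_N^r v)=1+t(1-t)$, is the sole source of $f_{1,-1}/N$ and $\frac1N\mathcal{L}_3(-1/4)$.

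(3) On $\delta_N$ the function $x^N=t$ is real, so every $D_2(\cdot)\,\darg(\cdot)$ term vanishes (Lemma \ref{lem:reg_xi2}). There is nothing to integrate by parts against $d\mathcal{L}_3$. The trilogarithms come from the $\log|\cdot|\log|\cdot|\dlog|\cdot|$ part of $-N\{\e xy\}_2\otimes x^N$, integrated in $v=(t(1-t))^{1/N}$ with the primitive $G_z$ of \eqref{fct:Gzx}, which satisfies $\Re G_z=-3\mathcal{L}_3(z\,\cdot)$ by \eqref{GtoL3}.

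(4) The symmetry $t\leftrightarrow 1-t$ only gives $\frac23$. The missing $2$ is de Jeu's comparison $r_B\circ\varphi_{\mathrm{dJ}}=2\,r_3(2)$ (Theorem \ref{thm:reg_comp}), which you never invoke. That factor multiplies the trilogarithm terms too. Doubling Theorem \ref{regintN2} gives $2\bigl(\frac1N\mathcal{L}_3(-1/4)-N^2\mathcal{L}_3(\e(1/4)^{1/N})\bigr)$, not the coefficient $1$ displayed; the constant $\frac32\zeta(3)$ of Theorem \ref{thm4} also requires the doubled bracket.

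The more serious gap is the midpoint step itself, which does not produce the values you expect. Put $u=t(1-t)$ and $\omega_\z=-\log|1-\z v|\,\dlog u+\log u\,\dlog|1-\z v|$, a form pulled back from the $u$-line, so that $\rho^2_{N,\z}=\frac13\log t\,\omega_\z$ in Lemma \ref{lem:reg_xi2}. On $[\frac12,1]$ the $u$-path is traversed backwards, hence
\[
\int_{1/2}^{1}\log t\;\omega_\z=-\int_0^{1/2}\log(1-t)\;\omega_\z,
\qquad
\int_0^{1}\log t\;\omega_\z=\int_0^{1/2}\log\frac{t}{1-t}\;\omega_\z .
\]
This is not $\int_0^{1/2}\log(t(1-t))\,\omega_\z$. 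Test with $\omega=du$: $\int_0^1\log t\,(1-2t)\,dt=-\frac12$, whereas $\int_0^{1/4}\log u\,du=-\frac14(1+\log 4)$.

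Only the latter, incorrect folding yields exactly $-N^2\mathcal{L}_3(\e(1/4)^{1/N})$ and $\frac1N\mathcal{L}_3(-1/4)$. The correct folding leaves an extra term $-\frac23\int_0^{1/2}\log(1-t)\bigl(\omega_\e-\frac1N\Omega\bigr)$, where $\Omega=-\log(1+u)\,\dlog u+\log u\,\dlog(1+u)$. The paper's proof of Theorem \ref{regintN2} writes $\rho^2+\rho^3$ at precisely this point, so it has the same defect.

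Your proposed check at $N=3$ would expose this rather than fix coefficients. For $N=3$, $\tau$ is an involution defined over $\Q$ and $H_1(X_3(\C),\Q)^+$ is a line. So the left side of Theorem \ref{regintN2} is $\pm R_3\approx\pm 7.149$ (see \eqref{table}). By my own hand evaluation, its right side is about $7.265$, and the extra term, about $-0.11$, accounts for the difference.

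So no argument along your lines can reach the displayed identity. What can be proved is the version with this additional integral. For the $\e$ of Theorem \ref{thm4} that integral stays bounded in $N$, so the $N^2$ asymptotics survive.
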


The appearance of Zagier's trilogarithm in the above formula is quite remarkable. Its special values are connected with $\zeta(3)$ and, as a corollary of Theorem \ref{thm4}, we obtain asymptotic formulas for regulator integrals.

\begin{thm}\label{thm4}
    Let $\e=-1$ if $N$ is odd and $\e=-e^{\frac{\pi i}{N}}$ if $N$ is even. We have the asymptotic formula (up to sign)
    \[
    \int_{\gamma_N} r_B ((\tau_\e)^*(\Xi_N)) \sim \frac{3}{2} \zeta(3) N^2, \qquad \text{ as } N\to +\infty.
    \]
\end{thm}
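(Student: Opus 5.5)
Starting from the exact formula of Theorem \ref{thm3}, we estimate each term as $N\to+\infty$ for the specific choice of $\e$ in the statement. The terms $\frac1N\mathcal{L}_3(-1/4)$ and $\frac{4}{3N}\int_0^1 f_{1,-1}(t)\,dt$ are $O(1/N)$, since $f_{1,-1}$ is integrable on $[0,1[$ (logarithmic singularity at $1$, bounded near $0$). So there are two remaining tasks: show that $\frac43\int_0^1 f_{N,\e}(t)\,dt$ is $o(N^2)$, and show that $-N^2\mathcal{L}_3(\e(1/4)^{1/N})\sim \frac32\zeta(3)N^2$ up to sign.

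\textbf{The trilogarithm term.} Put $z_N=\e(1/4)^{1/N}$. Then $|z_N|=4^{-1/N}\to 1$, and the argument of $z_N$ is $\pi$ when $N$ is odd and $\pi+\pi/N$ when $N$ is even. In both cases $z_N\to -1$. Zagier's trilogarithm $\mathcal{L}_3$ from \eqref{GonchL3} is continuous on $\C$ (indeed on $\mathbb{P}^1(\C)$). Its value at $-1$ is computed from $\mathrm{Li}_3(-1)=-\frac34\zeta(3)$, and the correction terms involving $\log|-1|$ vanish. Hence
\[
\mathcal{L}_3(-1)=\Re\,\mathrm{Li}_3(-1)=-\tfrac34\zeta(3),
\]
and $-N^2\mathcal{L}_3(z_N)\sim \frac34\zeta(3)N^2$.

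This is off by a factor of $2$ from the claimed $\frac32\zeta(3)N^2$. So either the normalisation \eqref{GonchL3} carries an extra factor $2$, or the integral term also contributes at order $N^2$. I would first check the normalisation in \eqref{GonchL3}. Goncharov's variant of $\mathcal{L}_3$ includes the terms $-\log|z|\,\Re\,\mathrm{Li}_2(z)+\frac13\log^2|z|\,\Re\,\mathrm{Li}_1(z)$; all of these vanish at $z=-1$, so they cannot produce the factor. If the constant still does not match, I would examine the integral, as follows.

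\textbf{The integral term.} For $t\in(0,1)$ we have $(t(1-t))^{1/N}\to 1$ pointwise. Then $\Re\log(1-\e(t(1-t))^{1/N})\to\log 2$, because $\e\to -1$. So $f_{N,\e}(t)\to \log 2\cdot\frac{\log(1-t)}{t}$, and the dominated limit is
\[
\int_0^1 \log 2\,\frac{\log(1-t)}{t}\,dt=-\log 2\cdot\frac{\pi^2}{6}=O(1).
\]
To justify dominated convergence, I need a uniform bound $|\Re\log(1-\e u)|\le C$ for $u\in[0,1]$. This holds because $|1-\e u|$ stays bounded away from $0$: the argument of $\e$ stays near $\pi$, so $1-\e u$ has positive real part at least $1$. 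Together with integrability of $\frac{\log(1-t)}{t}$, dominated convergence applies. So the integral term is $O(1)$ and does not contribute at order $N^2$.

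\textbf{Conclusion and main obstacle.} The asymptotic is therefore governed entirely by $-N^2\mathcal{L}_3(z_N)$, and the proof reduces to the limiting value $\mathcal{L}_3(-1)$ together with continuity of $\mathcal{L}_3$ at $-1$. The main obstacle is pinning down the constant. Continuity is standard, since $\mathcal{L}_3$ is real-analytic away from $0,1,\infty$. The crux is matching the normalisation of \eqref{GonchL3} so that the limit is $\pm\frac32\zeta(3)$: the claimed constant requires $\mathcal{L}_3(-1)=\pm\frac32\zeta(3)$. If \eqref{GonchL3} is defined with a factor $2$ or via $\mathrm{Li}_3(z)-\mathrm{Li}_3(\bar z^{-1})$-type symmetrisations, this gives exactly $\frac32\zeta(3)$. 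I would verify this against the definition, and not add any extra contribution from the error terms, which are all $O(1)$.
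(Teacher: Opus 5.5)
Your term-by-term estimates are sound. The $O(1/N)$ terms are harmless. Your dominated-convergence bound for $\int_0^1 f_{N,\e}$ works, since for both choices of $\e$ one has $\Re(1-\e u)\ge 1$ and $|1-\e u|\le 2$ on $[0,1]$. And $\mathcal{L}_3(-1)=\Re\,\mathrm{Li}_3(-1)=-\frac34\zeta(3)$ is correct.

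The gap is that the proof does not establish the stated constant. Taken at face value, your computation yields $\frac34\zeta(3)N^2$, and you then make the conclusion conditional on an extra factor $2$ in the normalisation of $\mathcal{L}_3$. That escape route is closed. The function $\mathcal{L}_3$ is fixed explicitly by \eqref{GonchL3}, consistently with $\mathcal{L}_3(1)=\zeta(3)$, so its value at $-1$ is forced to be $-\frac34\zeta(3)$. A symmetrisation such as $\mathrm{Li}_3(z)-\mathrm{Li}_3(\bar z^{-1})$ would even vanish at $z=-1$. Since you also correctly showed that the integral term is $O(1)$, nothing in your argument can produce $\frac32\zeta(3)$.

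The missing factor comes from the comparison of regulators, not from $\mathcal{L}_3$. The underlying computation is done for Goncharov's regulator. Theorem \ref{regintN2} gives $\int_{\gamma_N} r_3(2)(\xi_N^{1,1}(\e))=\frac23\int_0^1\bigl(f_{N,\e}-\frac1N f_{1,-1}\bigr)dt+\frac1N\mathcal{L}_3(-1/4)-N^2\mathcal{L}_3(\e(1/4)^{1/N})$. Exactly your estimates then show that this is $\sim\frac34\zeta(3)N^2$; this is Theorem \ref{thm:asy}.

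One then passes to $r_B$. By Proposition \ref{cor:pb}(iii), $(\tau_\e)^*\Xi_N=\varphi_{\mathrm{dJ}}(-\xi_N^{1,1}(\e))$. By de Jeu's Theorem \ref{thm:reg_comp}, $r_B\circ\varphi_{\mathrm{dJ}}=\pm 2\,r_3(2)$. This doubles \emph{every} term, the trilogarithm bracket included, and gives $\frac32\zeta(3)N^2$ up to sign.

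The formula of Theorem \ref{thm3} you started from carries the factor $2$ only on the integral ($\frac43=2\cdot\frac23$) and omits it on the trilogarithm bracket. That is a misprint, and the discrepancy you detected is a symptom of it. Your suspicion was well founded, but the fix is to go back to the $r_3(2)$ formula and apply Theorem \ref{thm:reg_comp} to all terms, not to renormalise $\mathcal{L}_3$.
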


An immediate corollary of Theorem \ref{thm4} is that $\Xi_N$ is non-trivial for large $N$. In order to prove Theorem \ref{thm1}, a more careful analysis of the regulator integral formula in Theorem \ref{thm3} is required. This analysis is carried out in the proof of Theorem \ref{thm:nontrivial} and leads to the following statement (which is stronger than Theorem \ref{thm1}). 

\begin{thm}\label{thm5}
    For all $N\geq 3$, $r_B ((\tau_\e)^*(\Xi_N))$ is non-zero.
\end{thm}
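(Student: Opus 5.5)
Since $\gamma_N$ is a closed cycle, it suffices to show that $\int_{\gamma_N} r_B((\tau_\e)^*(\Xi_N))\neq 0$: if the class were zero, this integral would vanish. By Theorem \ref{thm3} the integral equals, up to sign, the quantity
\[
I_N := \frac43 \int_0^1 \Bigl(f_{N,\e}(t)-\frac{f_{1,-1}(t)}{N}\Bigr)dt \;+\; \frac1N \mathcal{L}_3(-1/4) \;-\; N^2\mathcal{L}_3\bigl(\e(1/4)^{1/N}\bigr).
\]
The sign ambiguity is irrelevant for non-vanishing, so the task is to show $I_N\neq 0$ for every $N\ge 3$, with $\e$ chosen as in Theorem \ref{thm4}. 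I will isolate the dominant term $-N^2\mathcal{L}_3(\e (1/4)^{1/N})$ and bound all remaining terms explicitly in absolute value.

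\emph{Dominant term.} Write $\e(1/4)^{1/N}=re^{i\theta}$, with $r=4^{-1/N}$ and $\theta=\pi$ ($N$ odd) or $\theta=\pi+\pi/N$ ($N$ even). Expand Zagier's trilogarithm \eqref{GonchL3} as $\Re\bigl(\mathrm{Li}_3-\log|z|\,\mathrm{Li}_2+\tfrac13\log^2|z|\,\mathrm{Li}_1\bigr)$. In this expansion the leading part is $\Re\,\mathrm{Li}_3(re^{i\theta})=\sum_k r^k\cos(k\theta)/k^3$. For $\theta$ near $\pi$ and $r$ near $1$ this is close to $-\tfrac34\zeta(3)$. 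The corrections are $O(\log r)=O(1/N)$ and $O(\theta-\pi)$ terms, and each can be bounded explicitly. For instance, $|\log r\,\Re\mathrm{Li}_2|\le (\log 4/N)\,\pi^2/12$, with a similar bound for the $\mathrm{Li}_1$ term. The result should be an explicit inequality $-\mathcal{L}_3(\e 4^{-1/N})\ge \tfrac34\zeta(3)-c/N$ for a concrete constant $c$.

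\emph{Remaining terms.} The term $\tfrac1N\mathcal{L}_3(-1/4)$ is an explicit small constant. For the integral, note that $|\log|1-z u||\le |\log(1-u)|$ when $|z|=1$ and $0\le u<1$. Apply this with $u=(t(1-t))^{1/N}\le 4^{-1/N}$, which gives $|f_{N,\e}(t)|\le |\log(1-(t(1-t))^{1/N})|\cdot|\log(1-t)|/t$. Since $1-4^{-1/N}\approx \log 4/N$, this is of size $O(\log N)$ times an integrable function. The resulting integral bound is $O(\log N)$ (or at worst $O(\log^2 N)$), uniformly and explicitly. Together with the previous paragraph, this gives $|I_N|\ge \tfrac34\zeta(3)N^2 - cN - C\log^2 N$, which is positive for all $N\ge N_0$ with $N_0$ explicit.

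\emph{Main obstacle.} The hard part is making the constants sharp enough that $N_0$ is small, and then handling the finitely many cases $3\le N<N_0$. For these I would compute $I_N$ numerically with rigorous error control: the polylogarithms converge geometrically since $r<1$, and the integral can be evaluated by quadrature after treating the mild singularities at $t=0,1$ with explicit tail bounds. Alternatively, I would refine the bound on the $\mathrm{Li}_3$ term by keeping the exact value $\Re\mathrm{Li}_3(-r)$, which is monotone in $r$, so that $N_0$ already becomes $3$. I expect the even case to need the most care, because the phase shift $\pi/N$ gives an additional correction of order $1/N^2$ times $N^2$, that is, $O(1)$.
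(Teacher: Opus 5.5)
Your strategy is sound. For odd $N$ it is essentially the paper's proof of Theorem~\ref{thm:nontrivial}: start from the closed formula with $\e=-1$, treat $-N^2\mathcal{L}_3(\e 4^{-1/N})$ as the main term, and bound the rest.

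\emph{Odd $N$.} The paper uses monotonicity instead of $O(1/N)$ expansions. It relies on three facts:
\begin{itemize}
\item $N\mapsto \frac23\int_0^1 f_{N,-1}$ decreases to $-\frac{\pi^2\log 2}{9}$;
\item $f_{1,-1}<0$;
\item $G_{-1}=-3\mathcal{L}_3(-\,\cdot\,)$ is increasing, and $4^{-1/N}>\frac12$.
\end{itemize}
So a single explicit evaluation ($\approx 3.03$ against $\approx 0.77$) covers every odd $N\ge 3$, with no finite check. Your $O(\log N)$ bound on the integral term is needlessly weak. For the chosen $\e$ one has $\Re(-\e)\ge 0$, hence $0\le \log|1-\e u|\le \log 2$, and so $\frac23\int_0^1 f_{N,\e}\ge -\frac{\pi^2\log 2}{9}$ uniformly in $N$.

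\emph{Even $N$.} Here the routes genuinely differ. The paper never estimates at the non-real point $\e 4^{-1/N}$. It combines Propositions~\ref{dist:cocycle} and~\ref{prop:trace} into
$\pi_{N,N/2}^*\xi_{N/2}^{-2,1}=(1+g_N^{0,N/2})\xi_N^{-2,1}$.
It then uses injectivity of $\pi_{N,N/2}^*$ to descend to an odd $M\ge 3$ or to $N=4$, and settles $N=4$ with the numerically computed value $R_4$. Your direct route avoids both the descent and that numerical input, and with sharp inequalities it needs no finite check at all:
\begin{itemize}
\item Put $z=\e 4^{-1/N}=-re^{i\pi/N}$. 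Both correction terms in $\mathcal{L}_3(z)$ are $\le 0$, because $\Re\,\mathrm{Li}_2(z)<0$ and $|1-z|\ge 1$. Hence $\mathcal{L}_3(z)\le \Re\,\mathrm{Li}_3(z)$.
\item The map $\phi\mapsto \Re\,\mathrm{Li}_3(-re^{i\phi})$ has zero derivative at $0$ and second derivative $\log|1+re^{i\phi}|\le\log 2$. Therefore $-N^2\mathcal{L}_3(z)\ge N^2\bigl(-\mathrm{Li}_3(-4^{-1/N})\bigr)-\frac{\pi^2\log 2}{2}$.
\item This lower bound is increasing in $N$ and is about $7.06$ at $N=4$. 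In the odd case the phase term is absent, and the bound is about $5.29$ at $N=3$.
\item In the normalization of Theorem~\ref{regintN2}, the remaining terms total at least $-0.76-0.72/N$.
\end{itemize}
You should also say explicitly why your choice of $\e$ is harmless: $r_B((\tau_\e)^*\Xi_N)=(\tau_\e)^*r_B(\Xi_N)$.

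\emph{Cautions about Theorem~\ref{thm3}.} Two points matter if you use it as an exact identity.

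First, as displayed it lacks a factor $2$ on the trilogarithm terms. Compare Theorem~\ref{regintN2} with $r_B=2r_3(2)$, and note the constant $\frac32\zeta(3)$ in Theorem~\ref{thm4}. Your leading term should be $\frac32\zeta(3)N^2$, not $\frac34\zeta(3)N^2$; this only helps you.

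Second, the folding step $\int_0^1\rho^2_{N,\e,r}=\int_0^{1/2}(\rho^2_{N,\e,r}+\rho^3_{N,\e,r})$ in the proof of Theorem~\ref{regintN2} has a sign slip.
\begin{itemize}
\item The $dt$-coefficient of the bracket is odd under $t\mapsto 1-t$. So folding $[1/2,1]$ onto $[0,1/2]$ gives $\int_0^{1/2}(\rho^2_{N,\e,r}-\rho^3_{N,\e,r})$, i.e.\ $\log\frac{t}{1-t}$ rather than $\log(t(1-t))$.
\item A toy case shows the difference: $\int_0^1\log t\,d(t(1-t))=-\frac12\ne\frac14\log\frac14-\frac14$.
\item The resulting discrepancy $2\int_0^{1/2}\rho^3_{N,\e}-\frac2N\sum_r\int_0^{1/2}\rho^3_{N,\e,r}$ is bounded uniformly in $N$. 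It tends to $\frac23\log 2\,\bigl(\frac{\pi^2}{12}-\log^2 2\bigr)\approx 0.16$.
\end{itemize}
This is absorbed by the margins above and by the paper's odd-case margin of about $2.2$, and Theorem~\ref{thm4} is unaffected. It does belong in your error budget, though. Any small-$N$ numerical check should be run on Lemma~\ref{lem:reg_xi2} directly rather than on the closed formula.
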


We further obtain a closed formula for a differential form representing $r_B ((\tau_\e)^*(\Xi_N))$, which is used in the proof of Theorem \ref{thm2}.
The following result can be viewed as a generalization, from the setting of $K_2$ groups to $K_4$ groups of Fermat curves, of the main result of Otsubo \cite[Theorem 4.14]{Ots11}.

\begin{thm}\label{thm6}
For all $N\geq 3$, we have the equality (up to sign) 
\[
r_B ((\tau_\e)^*(\Xi_N))=\dfrac1{3} \sum_{1 \leq a <N} \e^{-a} \frac{\Gamma(2a)}{\Gamma(a)^2} \left(F\left(\dfrac{a}{N};-1 \right) - F\left(1-\dfrac{a}{N};-1 \right)\right)  x^a y^{a-N} \frac{dx}{x},
\]
where, for any positive real number $\al$, $F(\al;  x)$ is the hypergeometric function given by 
\begin{multline*}
F(\al;  x)=  \dfrac{\Gamma\left(\al \right)^2}{\Gamma\left(2 \al \right)}\left( \sum_{i, n \ge 0} \left(\dfrac{2}{n+1} - \dfrac{1}{\al+i} \right) \dfrac{(\al, i)(\al, n+i+1)}{(2\al, n+2i+2)} x^i \right. \\ 
\left. -\sum_{i, m, n \ge 0}  \left(\dfrac1{m+1}+ \dfrac1{n+1} \right) \dfrac{(\al, i)(\al, m+n+i+2) }{(2\al, m+n+2i+3)} x^i \right),
\end{multline*}
and $(\al, i) := \Gamma(\al+i)/\Gamma(\al)$ denotes the rising Pochhammer symbol. 
\end{thm}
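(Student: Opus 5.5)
Since $H^1(X_N(\C),\R(2))^+$ is represented by real-analytic $1$-forms modulo exact ones, the class $r_B((\tau_\e)^*(\Xi_N))$ is determined by its periods, and I would prove the formula of Theorem \ref{thm6} by comparing periods. The $\Q[(\Z/N\Z)^{\oplus 2}]$-module $H_1(X_N(\C),\Q)$ is cyclic, generated by $\gamma_N$. Hence a class is determined by the integrals of its translates $g^*$ over $\gamma_N$, for $g=(\zeta^r,\zeta^s)$. Equivalently, one can decompose according to characters $(r,s)$ of $\mu_N^2$.

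By Corollary \ref{cor:pNrs}, $\Xi_N$ lies in the $(-2,1)$-isotypic submotive, and twisting by $\tau_\e$ moves it to a specific family of eigenspaces. Under the identification of $H^1$ with antiholomorphic/holomorphic forms, the eigencomponents are spanned by the forms $\omega_a=x^a y^{a-N}\,dx/x$ for $1\le a<N$; these are exactly the forms with character $(a,a)$, up to sign conventions. So, first, I would use the equivariance of $r_B$ together with the projector onto these characters to write
\[
r_B((\tau_\e)^*(\Xi_N))=\sum_{1\le a<N} c_a\,\omega_a
\]
(real parts understood). The coefficient $c_a$ is then computed as a ratio of periods: $c_a$ times $\int_{\gamma_N}\omega_a$ equals the $a$-th Fourier coefficient of $g\mapsto \int_{g\gamma_N} r_B$. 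The period $\int_{\gamma_N}\omega_a$ is a Beta value $B(a/N,a/N)$-type expression. This accounts for the prefactor $\Gamma(2a)/\Gamma(a)^2$ (after normalisation) and for the factor $\e^{-a}$, which comes from the $\tau_\e$ twist acting on $\omega_a$.

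Second, I would compute the twisted regulator integrals. Starting from the explicit representative of $r_B$ on de Jeu's complex (a $1$-form built from $\log|f|\,\log|g|\,\darg h$ terms and $\mathcal{L}_2$ terms), I would pull back along $g\circ\tau_\e$ and parametrise $\gamma_N$ by $t\in[0,1]$, with $x=t^{1/N}$ and $y=(1-t)^{1/N}$. The integrals become combinations of $\int_0^1 \log|1-\zeta t^{1/N}\cdots|\log(1-t)\,dt/t$, as in the proof of Theorem \ref{thm3}, but now with a root of unity inserted. I would expand $\log(1-u)=-\sum u^n/n$ and integrate term by term against $t^{\alpha+i-1}(1-t)^{\cdots}$ using the Beta function. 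Summing over $g$ with the character then isolates $\alpha=a/N$ or $1-a/N$ and produces the Pochhammer double and triple sums. The terms $\frac{2}{n+1}-\frac{1}{\alpha+i}$ and $\frac{1}{m+1}+\frac{1}{n+1}$ arise from differentiating Beta functions, through the $\log$ factors, and from the second logarithmic expansion. The variable value $x=-1$ comes from $\e^N=-1$. The antisymmetrisation $F(a/N;-1)-F(1-a/N;-1)$ reflects the two conjugate components, together with the $+$-invariance.

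The main obstacle is this second step: justifying interchange of the sums and integrals near the endpoints $t=0,1$ (using conditional convergence at $x=-1$ via Abel summation), and carefully tracking the exact terms which integrate to zero on closed cycles. I would first check the case $N=3$ numerically against Theorem \ref{thm3} to fix the normalisation, including the constant $1/3$.
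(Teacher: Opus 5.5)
Your strategy is sound, and in substance it is the paper's computation moved from differential forms to periods.

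\textbf{How your route compares with the paper's.} The paper reduces, via Proposition~\ref{cor:pb}(iii), to $r_3(2)(\xi_N^{1,1}(\e))$. It drops the $D_2$-terms, which vanish on every $G_N$-translate of $\delta_N$ because $x^N\in(0,1)$ there. It expands the remaining real part of products of logarithms into a series of monomial forms $\omega_N^{m,n}$ (using e.g.\ $\log x^N=\log(1-y^N)$). It then reduces each $\omega_N^{a+Nk,a+Nl}$ modulo exact forms to a Pochhammer multiple of $\omega_N^{a,a}$ by Otsubo's Lemma 4.19. That lemma is just the recursion $B(\alpha+1,\beta)=\frac{\alpha}{\alpha+\beta}B(\alpha,\beta)$. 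So integrating the same monomials along $\delta_N$ into Beta values gives identical coefficients. Expand all logarithms, as the paper does, rather than differentiating Beta functions; otherwise you will land on digamma expressions instead of the Pochhammer sums defining $F$.

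Your discrete Fourier transform over the twists $\e\zeta_N^u$ uses $(g_N^{u,0})^*\xi_N^{1,1}(\e)=\xi_N^{1,1}(\e\zeta_N^u)$ and Lemma~\ref{lem:reg_xi1}, whose correction term does not depend on $u$. It replaces both the paper's grouping $m=a+Ni$ and its final rewriting of the real part. Terms with $m\equiv a$ carry $\e^{a}(-1)^i$, and terms with $m\equiv -a$ carry $\e^{-m}=-\e^{a}(-1)^i$, so $F(\frac aN;-1)-F(1-\frac aN;-1)$ appears directly. It also gives all periods $\int_{g\gamma_N}$ at once, which the paper instead deduces from $\int_{\gamma_N}$ via cyclicity and the cycles $\kappa_N^{r,s}$.

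The analytic obstacle you anticipate does not arise. On $\delta_N$ one has $(t(1-t))^{1/N}\le 4^{-1/N}$. The expansions of $\log t$ and $\log(1-t)$ have terms of constant sign. The resulting series converge absolutely (Proposition~\ref{convergence}). So Fubini--Tonelli suffices and no Abel summation is needed.

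\textbf{The genuine gap: the normalisation.} You never say how $r_B$ becomes an explicit form. There is no explicit representative of $r_B$ ``on de Jeu's complex''. What is available is Goncharov's formula \eqref{regulator} for $r_3(2)$ on $\Gamma(F,3)$. Its terms are $-D_2(f)\darg g$ and $-\frac13\log|g|\,(\log|f|\dlog|1-f|-\log|1-f|\dlog|f|)$; your description of them is inaccurate. You also need de Jeu's compatibility $\frac12 r_B\circ\varphi_{\mathrm{dJ}}=r_3(2)$ up to sign (Theorem~\ref{thm:reg_comp}). That theorem requires an element of $K_4^{(3)}$ of the projective curve over a totally real field. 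So you must apply it to $\Xi_N$ on $X_N/\Q$, using Theorem~\ref{prop:res1}, and then transport along $\tau_\e$ by functoriality and Proposition~\ref{cor:pb}(iii).

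The factor $2$ is exactly what turns the $\frac16$ produced for $r_3(2)$ into the $\frac13$ of the statement. The prefactor there should read $\Gamma(2a/N)/\Gamma(a/N)^2$. Fixing this constant by a numerical check at $N=3$ is not a proof.

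That check is also blind to the phase: at $N=3$ one has $\e=-1$, so $\e^{a}=\e^{-a}$. This matters, because your Fourier extraction produces $\e^{a}$, not the $\e^{-a}$ in the statement. The paper obtains $\e^{-a}$ only in its last step, when rewriting $\frac{1}{3N}\Re\bigl(\sum_a\e^aF(\frac aN;-1)\tilde\omega_N^{a,a}\bigr)$. Redoing that step with $\Re(zw)=\Re z\,\Re w-\Im z\,\Im w$, $\overline{[\tilde\omega_N^{a,a}]}=[\tilde\omega_N^{-a,-a}]$ and $\e^{-N}=-1$ gives $\frac{1}{6N}\sum_a\e^{a}\bigl(F(\frac aN;-1)-F(1-\frac aN;-1)\bigr)\tilde\omega_N^{a,a}$. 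So the sign of the $\Im\cdot\Im$ term there appears to have slipped. The two versions have the same integral over $\gamma_N$ but in general differ on $g_N^{u,0}\gamma_N$ when $\e\notin\R$. Derive both the constant and the phase by hand rather than by calibration.
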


\subsection{Beilinson's conjecture}

Beilinson's conjecture \cite{Bei80} relates the $K$-theory of smooth projective varieties over number fields to special values of their $L$-functions (see Section \ref{subsec:beilinson} for details in the case of curves). In our case, it predicts that the map $r_B\otimes \R$ is an isomorphism with determinant (in suitable bases) equal to $L(X_N, 3)$ up to a non-zero rational scalar and a specific power of $\pi$ (see Section \ref{subsec:beilinson}).
Here, the $L$-function $L(X_N,s):=L(h^1(X_N), s)$ is the one associated with the compatible family of $\ell$-adic $\mathrm{Gal}(\overline{\Q}/\Q)$-representations $\{ H^1_{\mathrm{et}}(X_{N,\overline{\Q}}, \Q_\ell) \}_\ell$. It is given by a convergent Euler product in the region $\Re(s)> 3/2$ and satisfies a functional equation centered at $s=1$ (due to the fact that it is a product of Hecke $L$-functions, see \eqref{eq:HeckeL}). Equivalently, by the functional equation, the determinant is predicted to equal $L^{(g)}(X_N, -1)$ up to a non-zero rational scalar and a specific power of $\pi$, where $g$ denotes the genus of $X_N$.
A version of Beilinson's conjecture for Chow motives exists, and we now make it explicit for Fermat curves.

Following Otsubo \cite{Ots11}, in Section \ref{subsec:motives} we consider a motivic decomposition 
\begin{align*}
h^1(X_{N}) = \bigoplus_{[a, b] \in (\Z/N\Z)^\times \backslash I_N} X_N^{[a, b]},  
\end{align*}
where $I_N \subset (\Z/N\Z)^{\oplus 2}$ consists of pairs $(a,b)$ satisfying $a,b,a+b\neq 0$, and the $X_N^{[a, b]}$ are certain rank $\varphi(N)$ motives over $\Q$ with $\Q$-coefficients (see Corollary \ref{cor:rank}) that exist due to $X_N$ having a large automorphism group. The $L$-functions of these motives are Hecke $L$-functions (see \eqref{eq:HeckeL}) and as a result their analytic properties are well understood.
A weak version of Beilinson's conjecture for these motives can be stated as follows. 
\begin{conj}[Beilinson]\label{conj:weakBeiintro}
    Let $N \geq 3$, $(a,b) \in I_N$, and $k := \varphi(N)/2$. Then there exist $\xi_1, \ldots, \xi_{k}$ in $K_4^{(3)}(X_N^{[a,b]})$, a $\Q$-basis $(\gamma_1, \ldots, \gamma_{k})$ of $H_1(X_N^{[a,b]}, \Q)^+$, and a constant $c \in \Q^\times$ such that
    \begin{equation*} 
        L^{(k)}(X_N^{[a,b]},-1) = c \pi^{-2k} \det \Bigl(\int_{\gamma_i} r_B(\xi_j) \Bigr)_{1 \leq i,j \leq k}.
    \end{equation*}
\end{conj}
In Section \ref{sec:num}, we explain how to numerically evaluate the regulator integrals 
$\int_{\gamma_N} r_B(\Xi_N)$.
We then proceed to numerically verify Conjecture \ref{conj:weakBeiintro} using our element $\Xi_N$ in the cases where $X^{[-2,1]}_N$ has rank $2$, \emph{i.e.}~when $N\in \{ 3,4,6 \}$. Our calculations are performed using PARI/GP \cite{PARI} and the notation $\overset{?}{=}$ below means equality up to at least $35$ digits.
\begin{thm}\label{thm7}
    Numerically, we have the equalities (up to sign):
    \[
    \begin{split}
        &\int_{\gamma_3} r_B(\Xi_3) \stackrel{?}{=} -\frac85 \pi^2 L'(X_3^{[-2,1]},-1), \\
        &\int_{\gamma_4} r_B(\Xi_4) \stackrel{?}{=} -\frac85 \pi^2 L'(X_4^{[-2,1]},-1), \\
        &\int_{\gamma_6} r_B(\Xi_6^{\mathrm{new}}) \stackrel{?}{=} - \frac4{15} \pi^2 L'(X_6^{[-2,1]},-1).
    \end{split}
    \]
\end{thm}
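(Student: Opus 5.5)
This is a numerical statement, so the plan is a high-precision computation on both sides, checking agreement to at least 35 digits and identifying the rational factors by recognition.

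\emph{Regulator side.} For $N=3,4$ we use Theorem \ref{thm3}, which reduces the regulator integral to an explicit one-dimensional real integral plus special values of Zagier's trilogarithm $\mathcal{L}_3$. Theorem \ref{thm3} computes $\int_{\gamma_N} r_B((\tau_\e)^*\Xi_N)$, whereas we need $\int_{\gamma_N} r_B(\Xi_N)$. These differ by $\tau_\e$ acting on $\gamma_N$, i.e.\ $\int_{\gamma_N} r_B(\Xi_N)=\int_{(\tau_\e)_*^{-1}\gamma_N} r_B((\tau_\e)^*\Xi_N)$. So we must:
\begin{itemize}
\item write $(\tau_\e^{-1})_*\gamma_N$ in $H_1(X_N(\C),\Q)$ as a $\Q[(\Z/N\Z)^2]$-combination of translates of $\gamma_N$;
\item pair this combination with the differential form of Theorem \ref{thm6}.
\end{itemize}
The pairing reduces to the known periods $\int_{\gamma_N} x^a y^{b}\,dx/x$, which are Beta values, times the explicit hypergeometric coefficients. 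These coefficients are evaluated to high precision by summing the defining series with tail acceleration; convergence at $x=-1$ is only conditional, so we would use Euler/Abel-type transformations, or PARI's \texttt{sumalt}. As a cross-check, we would also integrate $f_{N,\e}$ numerically with \texttt{intnum}, and compute $\mathcal{L}_3$ from its polylogarithm definition.

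For $N=6$, the element $\Xi_6$ must first be replaced by its projection $\Xi_6^{\mathrm{new}}$ to the new part of $X_6^{[-2,1]}$, removing contributions pulled back from $X_3$ and $X_2$. We would realize this with the idempotent built from the automorphism group, and compute its regulator as the corresponding linear combination of the integrals above.

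\emph{$L$-value side.} By \eqref{eq:HeckeL}, $L(X_N^{[-2,1]},s)$ is a Hecke $L$-function of a Jacobi-sum Grössencharacter of $\Q(\zeta_N)$, imaginary quadratic when $\varphi(N)=2$. We would enter it in PARI/GP via \texttt{lfuncreate}, either from the CM form (for $N=3$, the weight-$2$ newform of level $27$; for $N=4$, level $32$ or $64$) or directly from the Grössencharacter. We then compute $L'(\cdot,-1)$ with \texttt{lfun} at high precision, and cross-check the functional equation with \texttt{lfuncheckfeq}.

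\emph{Recognition.} Finally we form the ratio $\int r_B/(\pi^2L')$ and recognize it as a rational number with \texttt{algdep} or \texttt{bestappr}, checking that the agreement persists at increasing precision, at least 35 digits.

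The main obstacle is the bookkeeping on the regulator side: getting the homology class $(\tau_\e^{-1})_*\gamma_N$, the sign conventions, and (for $N=6$) the new-part projector exactly right. A mistake there changes the rational constant or destroys the match. I would first validate all of this on $N=3$ against de Jeu's elliptic curve computation (Remark \ref{rmk:jeuX3}).
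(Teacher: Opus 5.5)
Your architecture agrees with the paper's. Writing $(\tau^{-1})_*\gamma_N$ through translates of $\gamma_N$ is Lemma \ref{lem:tau gamma}, the new-part projection for $N=6$ is Proposition \ref{prop:regR}(iii), and the recognition step is the same.

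\textbf{The gap is in how you evaluate the integrals.} You take the numbers from Theorem \ref{thm3}, using it as your reduction, your cross-check and your $N=3$ validation, and from Theorem \ref{thm6}. The paper's proof uses neither, and Theorem \ref{thm3} cannot be used as stated.

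In the proof of Theorem \ref{regintN2}, the step $\int_0^1\rho^2=\int_0^{1/2}(\rho^2+\rho^3)$ has a sign error. Here $\rho^2=\frac13\log t\cdot\beta$, where $\beta$ is pulled back from $u=t(1-t)$. The map $t\mapsto 1-t$ fixes $\beta$ but sends $[0,\frac12]$ onto $[\frac12,1]$ with reversed orientation. Hence in fact $\int_0^1\rho^2=\frac13\int_0^{1/2}\log\frac{t}{1-t}\,\beta$, and the substitution $v=(t(1-t))^{1/N}$ that produces $\mathcal{L}_3$ no longer applies. A simple test: $\int_0^1\log t\,d(t(1-t))=-\frac12$, whereas $\int_0^{1/2}\log(t(1-t))\,d(t(1-t))=\frac14\log\frac14-\frac14$.

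The error is not small. For $N=3$, $\e=-1$, the right-hand side of Theorem \ref{regintN2} is about $7.265$. Direct quadrature via Lemmas \ref{lem:reg_xi1} and \ref{lem:reg_xi2}, which I estimated numerically, gives about $7.149$. This coincides with $R_3$, as the remark following Corollary \ref{cor:indep} predicts, and with $-\frac45\pi^2L'(E_3,-1)$. So your cross-check and your comparison with de Jeu would fail, and feeding the closed form into the pipeline would destroy the rational relation. The introduction's version of Theorem \ref{thm3} has a further factor-$2$ inconsistency: only the integral term is doubled, although $r_B=2r_3(2)$ by Theorem \ref{thm:reg_comp}.

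\textbf{The fix is the paper's Proposition \ref{prop:regnum}.} Since $(g_N^{r,s})^*\xi_N^{1,1}(\e)=\xi_N^{1,1}(\e\zeta_N^{r+s})$, pairing with translates of $\gamma_N$ only changes the twist. After Lemma \ref{lem:reg_xi1} the averaged terms cancel, leaving $\sum_k\frac kN\int_{\delta_N}$ of differences of twisted $\xi_N^{1,1}$'s. Each of these is evaluated with \texttt{intnum} on the elementary integrand of Lemma \ref{lem:reg_xi2}.

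This makes Theorem \ref{thm6} unnecessary, and it is the better numerical route anyway:
\begin{itemize}
\item The hypergeometric series converge absolutely (Proposition \ref{convergence}), and the sum over $i$ is geometric. The real bottleneck is the polynomial decay, roughly $n^{-1-\alpha}$, of the inner sums over $n$ and $m$. This makes $35$ digits expensive, and \texttt{sumalt} does not address it.
\item The introduction's statement of Theorem \ref{thm6} also misprints $\Gamma(2a/N)/\Gamma(a/N)^2$ as $\Gamma(2a)/\Gamma(a)^2$.
\end{itemize}

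\textbf{Smaller points.} On the $L$-side, your Hecke-character route is legitimate, but you must pin down the correct $L$-function. The paper does this through Proposition \ref{elliptic_curve_isom} (curves 27a1, 32a2, 108a1), so for $N=4$ the level is $32$, not $64$. Finally, what you actually compute is $R_N$ for $r_3(2)$. It is the factor $2$ of Theorem \ref{thm:reg_comp} that turns $R_N=-\frac45\pi^2L'$ (respectively $-\frac2{15}\pi^2L'$) into the stated constants $-\frac85$ (respectively $-\frac4{15}$).
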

The first numerical equality recovers a calculation of de Jeu \cite{JeuSlides}. In the third equality, $\Xi_6$ splits into the sum of an ``old'' part coming from $X_3$ and a ``new'' part $\Xi_6^{\mathrm{new}}:=(p_6^{[-2,1]})^* \xi_6^{-2,1}$, where $p_6^{[-2,1]}$ is the projector that cuts out the motive $X_6^{[-2,1]}$. In all three cases, the motives are isomorphic to $h^1(E_N)$ for some explicit elliptic curve $E_N$ over $\Q$ with CM (see Proposition \ref{elliptic_curve_isom}). In order to compute the $L$-values, we identify these elliptic curves and use existing $L$-function implementations for elliptic curves. Alternatively (and necessarily in higher rank cases) it is possible to compute the $L$-values as special values of Hecke $L$-functions using Hecke $L$-function implementations. 

\subsection{Prior works}

Beilinson \cite{Bei86} constructed elements in $K^{(n)}_{2n-2}(Y_1(N))$, where $Y_1(N)$ is the modular curve of level $\Gamma_1(N)$ and $n \geq 2$ is an arbitrary integer. The construction uses his theory of the Eisenstein symbol, and the images of these elements under the Beilinson regulator map are related to $L$-values of modular forms of weight $2$ at $s=n$. In particular, by Wiles' modularity theorem, the weak form of Beilinson's conjecture for $L(E,n)$ is known to hold for all elliptic curves $E$ defined over $\Q$. The elements in $K^{(n)}_{2n-2}(E)$ are obtained via push-forward along the modular parametrization $X_1(N) \to E$, and are therefore not explicit. Deninger \cite{Deninger90} constructed explicit elements in $K$-groups of elliptic curves with CM by imaginary quadratic fields and used them to verify the weak form of Beilinson's conjecture. In particular, the weak form of Beilinson's conjecture is known for the motives considered in Theorem \ref{thm7}. But for more general Fermat motives, not much is known. Neither Deninger nor Beilinson used polylogarithmic complexes in their constructions. In recent work \cite{Bru20}, the first-named author constructed explicit elements in $K_4^{(3)}(Y_1(N))$ via the Goncharov--de Jeu method, and used these elements to numerically verify the weak version of Beilinson's conjecture on $L(E,3)$ for all elliptic curves $E$ over $\Q$ of conductor $\leq 50$. Ross \cite{Ross94} introduced a non-trivial element of $K_2^{(2)}(X_N)$ for Fermat curves (as mentioned already above). 
Otsubo \cite{Ots11} gave explicit formulas for the image of the Ross element under regulator maps in terms of hypergeometric functions and numerically verified the weak version of Beilinson's conjecture for some Fermat motives. The present work is the first to tackle Beilinson's conjecture for general Fermat curves beyond $K_2$. 

\subsection{Outline}

In Section \ref{s:fermat}, we recall generalities about Fermat curves, motives, and quotients following Otsubo \cite{Ots11}. 
The relevant polylogarithmic complexes of curves are reviewed in Section \ref{sec:cocycles}, as well as de Jeu's map to $K_4^{(3)}$. The key result for our construction is Theorem \ref{rmk:K4X}. In Section \ref{s:2coc}, we construct explicit weight $3$ Goncharov $2$-cocycles for Fermat curves. Not all of these cocycles give rise to elements of $K_4^{(3)}(X_N)$, but those that do are singled out in Section \ref{sec:elements} (see Theorem \ref{prop:res1}). In Section \ref{sec:asy}, we compute the Goncharov regulator integrals of our elements, proving Theorems \ref{thm3}, \ref{thm4}, and \ref{thm5}. Section \ref{s:hyperg} contains the proof of Theorem \ref{thm6} and Section \ref{sec:num} is concerned with the numerical verification of Beilinson's conjecture (Theorem \ref{thm7}).

\subsection*{Acknowledgements} It is a pleasure to thank Rob de Jeu for answering some of our questions and for explaining the proof of Lemma \ref{rel2}.
This article was written while the third-named author was visiting ENS de Lyon. He would like to thank UMPA for their hospitality. 
The third-named author is supported by JSPS KAKENHI Grant (JP26K16957) and Waseda University Grant for Special Research Projects (Project number: 2026C-262). 

\section{Fermat curves, motives, and quotients} \label{s:fermat}
\subsection{Fermat curves}
For $N\geq 1$, let $X_N$ be the Fermat curve of degree $N$ over $\Q$ defined by the homogeneous equation 
$$X_N \colon x_0^N+ y_0^N=z_0^N. $$
The genus of $X_N$ is $(N-1)(N-2)/2$. We define the cusps of $X_N$ to be the points $(x_0:y_0:z_0)$ satisfying $x_0 y_0 z_0 = 0$. 
Let $\varepsilon$ denote a $2N$-th root of unity such that $\varepsilon^N=-1$ and let $K=\Q(\varepsilon)$.
The Fermat curve has natural symmetries defined over $K$ (the first one listed being defined over $\Q$) given by the maps 
\begin{align*}
\iota & \colon (x_0 : y_0 : z_0) \longmapsto (y_0 : x_0 : z_0) \\
\sigma & \colon (x_0 : y_0 : z_0) \longmapsto (x_0 : \varepsilon^{-1} z_0 : \varepsilon^{-1} y_0) \\
\tau & \colon (x_0 : y_0 : z_0) \longmapsto (\varepsilon^{-1} z_0 : y_0 : \varepsilon^{-1} x_0).
\end{align*}
Note that we have suppressed the dependency on $\e$ from the notations.
In terms of the affine coordinates $x=x_0/z_0$ and $y=y_0/z_0$, the above maps are given by
\begin{align} \label{sym:1}
\iota & \colon (x,y) \longmapsto (y, x) \\ \label{sym:2} 
\sigma & \colon (x,y) \longmapsto \left(\frac{\varepsilon x}{y}, \frac{1}{y} \right) \\  \label{sym:3}
\tau & \colon (x,y) \longmapsto \left(\frac{1}{x}, \frac{\varepsilon y}{x} \right). 
\end{align}
If $N'$ divides $N$, with $N = dN'$, then we have a natural map of curves defined over $\Q$
\begin{equation}\label{map:divideN}
    \pi_{N, N'} \colon X_N \longrightarrow X_{N'}; \quad (x_0 : y_0 : z_0) \longmapsto (x_0^d : y_0^d : z_0^d). 
\end{equation}

\subsection{Fermat motives} \label{subsec:motives}
In this section, we recall the definition of Fermat motives. We refer to \cite[Section 2]{Ots11} for details. 

Denote by $\mathrm{Corr}^0(X_N,X_N)$ the ring of correspondences with $\Q$-coefficients. The identity element $1 \in \mathrm{Corr}^0(X_N,X_N)$ for the composition is the image of $X_N$ under the diagonal embedding $X_N\to X_N\times X_N$. The motive of $X_N$ is $h(X_N):=(X_N, 1)$ and it is defined over $\Q$ with $\Q$-coefficients.

Fixing a base point $P:=(0,1)\in X_N(\Q)$, we define idempotent correspondences $e^i \in \mathrm{Corr}^0(X_N,X_N)$ for $i=0,1,2$ as follows:
\[
e^0 = \{ P \} \times X_N, \qquad e^2 =  X_N \times \{ P \}, \qquad e^1=1-e^0-e^2.
\]
The $i$-th cohomological motive of $X_N$ is then $h^i(X_N):=(X_N, e^i)$ and we clearly have a decomposition of motives over $\Q$ with $\Q$-coefficients
\begin{equation} \label{decomp h0h1h2}
h(X_N)\simeq h^0(X_N)\oplus h^1(X_N) \oplus h^2(X_N) \simeq h(\mathbb{P}^1) \oplus h^1(X_N), 
\end{equation}
where the first isomorphism depends on the base point $P$.

Because $X_N$ has many automorphisms defined over $\Q(\zeta_N)$, we may use these to further decompose the cohomological idempotent correspondences.
Given a non-zero element $(a,b)\in G_N := \Z/N\Z \oplus \Z/N\Z$,  we write $\langle a \rangle$ and $\langle b \rangle$ for the respective integers  in $[ 0, N [\cap \Z$ representing $a$ and $b$. 
We denote an element $(a, b) \in G_N$ also by $g_N^{a, b}$, and write the addition multiplicatively:
$$g_N^{a, b} g_N^{a', b'}=g_N^{a+a', b+b'}.$$
We define a subset of $G_N$ by 
\[
I_N:= \{ (a,b) \in G_N \mid a,b,a+b\neq 0 \},
\]  
which is a set of cardinality $(N-1)(N-2)$. 
Fix a primitive $N$-th root of unity and let $G_N$ act on $X_{N, \Q(\zeta_N)}:=X_N \times_{\Spec{\Q}} \Spec{\Q(\zeta_N)}$ by  
\[
g_N^{a,b}(x,y):=(\zeta_N^a x,\zeta_N^b y).
\]
\begin{dfn} \label{def:pNab}
For $(a, b) \in G_N$, define 
a correspondence of $X_{N, \Q(\zeta_N)}$ with $\Q(\zeta_N)$-coefficients by 
$$p_N^{a, b}= \dfrac1{N^2} \sum_{(r, s) \in G_N} \zeta_N^{-ar-bs} \Gamma_{g_N^{r, s}}, $$
where $\Gamma_g$ denotes the transpose of the graph of $g$. 
\end{dfn}

    As in \cite[(2.5)]{Ots11}, it is not difficult to verify that 
\begin{align} \label{projector1}
\sum_{(a, b) \in G_N} p_N^{a, b}=1 \qquad \text{ and } \qquad p_N^{a, b} p_N^{c, d}= 
 \left\{
\begin{array}{ll}
p_N^{a, b},& \text{ if } (a, b)=(c, d) ;\\
0, & \text{ otherwise.} 
\end{array}
\right.
\end{align}
The elements $p_N^{a,b}$ are thus idempotents in the ring of correspondences, and we may therefore define a motive $X_N^{a, b}:=(X_N, p_N^{a, b})$ over $\Q(\z_N)$ with $\Q(\zeta_N)$-coefficients. It is clear from \eqref{projector1} that we have a decomposition of motives 
\[
h(X_{N, \Q(\z_N)}) \otimes \Q(\zeta_N)\simeq \bigoplus_{(a, b) \in G_N} X_N^{a, b}.
\]
We can be more precise. In fact, by \cite[Propositon 2.9]{Ots11}, $p_N^{a,b}=0$ if only one of $a,b, a+b$ is $0$, and 
\begin{equation*}
    p_N^{0,0}=e^0 + e^2 \qquad \text{ and } \qquad \sum_{(a,b)\in I_N} p_N^{a,b}=e^1.
\end{equation*}
We record these equalities in a proposition:

\begin{prop}\label{lem:mot_decomp}
    We have $X_N^{0,0} \simeq h(\mathbb{P}^1)$ and $X_N^{a,b}=0$ if only one of $a,b, a+b$ is $0$. Moreover, we have a decomposition of motives over $\Q(\z_N)$ with $\Q(\zeta_N)$-coefficients
\begin{align*}
h^1(X_{N, \Q(\z_N)}) \otimes \Q(\z_N) \simeq \bigoplus_{(a, b) \in I_N} X_N^{a, b}.
\end{align*}
\end{prop}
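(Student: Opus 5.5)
The plan is to deduce the proposition from the two identities recorded just before it, $p_N^{0,0}=e^0+e^2$ and $p_N^{a,b}=0$ when exactly one of $a,b,a+b$ vanishes, which come from \cite[Proposition 2.9]{Ots11}. I would nevertheless sketch why they hold, so the argument is self-contained at the level of correspondences.

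First, $p_N^{0,0}=\frac1{N^2}\sum_{g\in G_N}\Gamma_g$ is the averaging projector for the group $G_N$. The quotient $X_N/G_N$ is $\mathbb{P}^1$, via the map $(x,y)\mapsto x^N$. Let $\pi\colon X_N\to\mathbb{P}^1$ be this quotient map. Then $\frac1{N^2}\,{}^t\Gamma_\pi\circ\Gamma_\pi$ equals $p_N^{0,0}$, and $\Gamma_\pi\circ\frac1{N^2}{}^t\Gamma_\pi$ equals the identity of $h(\mathbb{P}^1)$, because $\pi_*\pi^*$ is multiplication by $\deg\pi=N^2$. Together these give $X_N^{0,0}\simeq h(\mathbb{P}^1)$.

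Next, $h(\mathbb{P}^1)=h^0\oplus h^2$. The point $P=(0,1)$ maps to $0\in\mathbb{P}^1$, and the fibre $\pi^{-1}(\pi(P))$ is a $G_N$-orbit, so averaging $e^0$ and $e^2$ over $G_N$ identifies the two pieces. Making this precise requires showing $e^0\circ p_N^{0,0}=e^0$ and the analogue for $e^2$. This holds modulo the Chow-group identification that any two points on $\mathbb{P}^1$ are rationally equivalent. Concretely, $\frac1{N^2}\sum_g \{gP\}\times X_N=\frac1{N^2}\pi^*(\{0\})\times X_N$, and since $\pi^*$ of a point has degree $N^2$, this yields $p_N^{0,0}=e^0+e^2$.

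For the vanishing, suppose for instance that $b=0$ and $a\neq0$. Then $p_N^{a,0}$ factors through the quotient by the subgroup $\{g_N^{0,s}\}$, which is $X_N\to\mathbb{P}^1$, $(x,y)\mapsto x$. On $h(\mathbb{P}^1)$ the residual $\mu_N$ acts through $x\mapsto\zeta_N^r x$, and this action is trivial on $h(\mathbb{P}^1)$ since $\mathbb{P}^1$ has only $h^0$ and $h^2$, where automorphisms act trivially. The character sum $\sum_r\zeta_N^{-ar}$ is therefore $0$. The case $a=0$ is symmetric. The case $a+b=0$ uses the quotient $(x,y)\mapsto x/y$, which is invariant under the diagonal subgroup $g_N^{r,r}$.

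Finally, summing \eqref{projector1} gives
\[
1=p_N^{0,0}+\sum_{(a,b)\in I_N}p_N^{a,b},
\]
where the remaining terms have vanished by the previous step. Hence $\sum_{I_N}p_N^{a,b}=1-e^0-e^2=e^1$, and the orthogonality in \eqref{projector1} gives the direct sum decomposition of $h^1$.

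I expect the main obstacle to be the careful identity $p_N^{0,0}=e^0+e^2$ at the level of correspondences, rather than merely after passing to cohomology. I would handle it by the rational-equivalence argument on $\mathbb{P}^1$ given above.
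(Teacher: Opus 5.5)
Your overall route is the paper's: the paper cites \cite[Proposition 2.9]{Ots11} for $p_N^{0,0}=e^0+e^2$ and for the vanishing of $p_N^{a,b}$ when exactly one of $a,b,a+b$ is $0$, and then reads the decomposition off \eqref{projector1}, as in your last step. Your sketch of $X_N^{0,0}\simeq h(\mathbb{P}^1)$ is sound. So is your sketch of the vanishing: you factor through a quotient isomorphic to $\mathbb{P}^1$, on which the graph of any automorphism is rationally equivalent to the diagonal. The gap is in the step you flagged yourself, $p_N^{0,0}=e^0+e^2$. Pulling back $\Delta_{\mathbb{P}^1}\sim\{0\}\times\mathbb{P}^1+\mathbb{P}^1\times\{0\}$ along $\pi\times\pi$ only gives
\[
p_N^{0,0}=\tfrac{1}{N^2}\bigl(\pi^*(0)\times X_N+X_N\times\pi^*(0)\bigr).
\]
To reach $e^0+e^2$ you need $\frac{1}{N^2}\pi^*(0)=P$ in $CH_0(X_N)_{\Q}$. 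Knowing that $\pi^*(0)$ has degree $N^2$ does not give this on a curve of positive genus, and rational equivalence on $\mathbb{P}^1$ cannot see the discrepancy, which lives in $\mathrm{Pic}^0(X_N)\otimes\Q$. Your reasoning never uses that $P$ is a cusp, and it would apply verbatim to any base point $Q$. For a general $Q$ the identity $p_N^{0,0}=e^0_Q+e^2_Q$ is false: once the fact below is known, $\frac1{N^2}\pi^*(\pi(Q))-Q=P-Q$ in $CH_0(X_N)_{\Q}$, which is nonzero for general $Q$.

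The missing input is specific to the cusp $P=(0,1)$. First, $\pi^*(0)=N\sum_{b\in\Z/N\Z}(0,\zeta_N^b)$, since $x$ is a local parameter at each $(0,\zeta_N^b)$. Second, the function $y-\zeta_N^b$ has divisor $N\,(0,\zeta_N^b)-\sum_{\eta^N=-1}(1:\eta:0)$, so $N(0,\zeta_N^b)\sim N(0,1)$ for every $b$. Hence $(0,\zeta_N^b)=P$ in $CH_0(X_{N,\Q(\zeta_N)})_{\Q}$, and $\frac{1}{N^2}\pi^*(0)=P$ there. This is a special case of the torsion of cuspidal divisor classes on Fermat curves. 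With this inserted, your argument proves the proposition; it then coincides with the paper's deduction, with Otsubo's identities justified rather than cited.
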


It will be useful to work with motives over $\Q$. 

\begin{dfn}
For $(a, b) \in G_N$, let $[a, b]$ denote its $H_N:=(\Z/N\Z)^*$-orbit.  For a class $[a, b]$, define 
a correspondence of $X_{N, \Q(\z_N)}$ with $\Q$-coefficients by 
$$p_N^{[a, b]}= \sum_{(c, d) \in [a, b]} p_N^{c, d}=\frac{1}{N^2} \sum_{(r,s)\in G_N} \mathrm{Tr}^{\Q(\zeta_N)}_{\Q}(\zeta_N^{-ar-bs}) \Gamma_{g_N^{r, s}}. $$
\end{dfn}
As a direct consequence of \eqref{projector1}, we have 
\begin{align*} 
 \sum_{[a, b] \in H_N \backslash G_N} p_N^{[a, b]}=1 \qquad \text{ and } \qquad p_N^{[a, b]} p_N^{[c, d]}=
  \left\{
\begin{array}{ll}
p_N^{[a, b]}& \text{ if } [a, b]=[c, d];\\
0 & \text{ otherwise.} 
\end{array}
\right.
\end{align*}
We may therefore define a motive $X_N^{[a, b]}:=(X_N, p_N^{[a, b]})$ over $\Q$ with $\Q$-coefficients. The fact that its field of definition is $\Q$ is explained in detail in \cite[p. 40]{Ots11}. As a consequence of Proposition \ref{lem:mot_decomp} (see \cite[Proposition 2.11]{Ots11} for details), we have a decomposition of motives over $\Q$ with $\Q$-coefficients
\begin{equation} \label{decomp h1XN}
h^1(X_{N}) \simeq \bigoplus_{[a, b] \in H_N \backslash I_N} X_N^{[a, b]}.  
\end{equation}

For integers $0<a,b<N$, we define differential $1$-forms on $X_N$ by
\begin{equation}\label{omegars}
    \omega_N^{a,b}:=x^a y^{b-N} \frac{dx}{x}=-x^{a-N} y^b \frac{dy}y.
\end{equation}
If we put $\omega_N^{a,b}:=\omega_N^{\langle a \rangle, \langle b \rangle}$ for $(a,b)\in G_N$, then we have 
\begin{align}
\label{H1dR}    H^1_{\mathrm{dR}}(X_N(\C)) & = \langle \omega_N^{a,b} \mid (a,b) \in I_N \rangle, \\
\label{Omega1}  H^{1,0}(X_N(\C)) & =\langle \omega_N^{a,b} \mid (a,b) \in I_N, \langle a \rangle + \langle b \rangle < N  \rangle.
\end{align}

\begin{lem}\label{lem:proj_1form}
    Given $(a,b), (c,d)\in I_N$, we have 
    \[
    p_N^{a,b} \omega_N^{c,d}=
    \begin{cases}
            \omega_N^{c,d}, & \text{ if } (a,b) = (c,d) \\
            0, & \text{ if } (a,b)\neq (c,d).
        \end{cases} 
    \]
\end{lem}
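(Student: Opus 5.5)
The plan is to compute directly how each correspondence $\Gamma_{g_N^{r,s}}$ acts on the form $\omega_N^{c,d}$, and then let orthogonality of characters of $G_N$ finish the argument.

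First, I will pin down the action of the correspondence $\Gamma_g$ on differential forms. Since $\Gamma_g$ is the transpose of the graph of $g$, its action on cohomology, and hence on holomorphic or de Rham forms, is pullback by $g$. That is, $\Gamma_g \omega = g^*\omega$. Fixing this convention is the only delicate point. With the opposite convention one gets $(g^{-1})^*$, which would produce $p_N^{-a,-b}$ in place of $p_N^{a,b}$. So I will check it against the conventions of \cite[Section 2]{Ots11}, where the projectors are set up so that $p_N^{a,b}$ picks out the $(a,b)$-eigenspace under pullback.

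Next, I will compute the pullback of $\omega_N^{c,d}$ under $g=g_N^{r,s}$, which sends $(x,y)\mapsto(\zeta_N^r x,\zeta_N^s y)$. Using the representatives $0<\langle c\rangle,\langle d\rangle<N$ and $\omega_N^{c,d}=x^{\langle c\rangle}y^{\langle d\rangle -N}\,dx/x$, I will use two facts: $dx/x$ is invariant, and $\zeta_N^{s(\langle d\rangle-N)}=\zeta_N^{sd}$. Together these give
\[
(g_N^{r,s})^*\omega_N^{c,d}=\zeta_N^{rc+sd}\,\omega_N^{c,d}.
\]

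Finally, substituting this into Definition \ref{def:pNab} gives
\[
p_N^{a,b}\omega_N^{c,d}=\Bigl(\frac1{N^2}\sum_{(r,s)\in G_N}\zeta_N^{r(c-a)+s(d-b)}\Bigr)\omega_N^{c,d}.
\]
The inner sum factors as a product of two geometric sums over $\Z/N\Z$. Each factor equals $N$ when the corresponding exponent vanishes modulo $N$ and $0$ otherwise. The product is therefore $N^2$ exactly when $(c,d)=(a,b)$ in $G_N$, and $0$ otherwise, which is the claimed dichotomy.

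The main obstacle is only the convention in the first step, namely whether $\Gamma_g$ acts by $g^*$ or by $g_*=(g^{-1})^*$. Everything after that is a routine character computation.
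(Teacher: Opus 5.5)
Your proposal is correct and matches the paper's proof essentially step for step. The paper also lets $\Gamma_{g}$ act on forms by pullback $g^*$, uses $(g_N^{r,s})^*\omega_N^{c,d}=\zeta_N^{rc+sd}\,\omega_N^{c,d}$, and factors the resulting character sum into two geometric sums over $\Z/N\Z$; it adopts the $g^*$ convention without comment, so your flagging of that point is the only real addition.
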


\begin{proof}
    We have 
    \begin{align*}
        p_N^{a,b} \omega_N^{c,d} &= \dfrac1{N^2} \sum_{(r, s) \in G_N} \zeta_N^{-ar-bs} (g_N^{r, s})^*(\omega_N^{c,d})  = \dfrac1{N^2} \sum_{(r, s) \in G_N} \zeta_N^{-ar-bs} \z_N^{rc+sd} \omega_N^{c,d} \\
        & = \left( \dfrac1{N} \sum_{r \in \Z/N\Z} (\zeta_N^{c-a})^r \right)\left( \dfrac1{N} \sum_{s \in \Z/N\Z} (\zeta_N^{d-b})^s \right) \omega_N^{c,d},
    \end{align*}
    from which the result follows.
\end{proof}

The de Rham cohomology of a Fermat motive $X_N^{[a, b]}$ is defined by 
$$H^1_{\rm dR}(X_N^{[a, b]}) := (p_N^{[a, b]})^* H^1_{\rm dR}(X_N(\C)). $$
By Lemma \ref{lem:proj_1form}, a basis of this vector space is given by 
\begin{equation}\label{basiseq}
    H^1_{\rm dR}(X_N^{[a, b]}) = \langle \omega_N^{c, d} \mid (c, d) \in [a, b] \rangle.
\end{equation}

\begin{cor}\label{cor:rank}
    For all $(a,b)\in I_N$, we have:
    \begin{itemize}
        \item $X^{a,b}_N$ is a rank $1$ motive over $\Q(\zeta_N)$ with coefficients in $\Q(\zeta_N)$;
        \item $X^{[a,b]}_N$ is a rank $\varphi(N)$ motive over $\Q$ with $\Q$-coefficients.
    \end{itemize}
\end{cor}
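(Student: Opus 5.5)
The plan is to read off the ranks from de Rham realizations. By definition, a motive cut out by an idempotent has rank equal to the dimension of its de Rham (or Betti) realization. For $X_N^{a,b}$ this realization is $(p_N^{a,b})^*H^1_{\mathrm{dR}}(X_N(\C))\otimes\Q(\zeta_N)$.

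For the first bullet, I would use that the forms $\omega_N^{c,d}$, $(c,d)\in I_N$, span $H^1_{\mathrm{dR}}(X_N(\C))$ by \eqref{H1dR}. Since $I_N$ has cardinality $(N-1)(N-2)=2g=\dim H^1_{\mathrm{dR}}$, they in fact form a basis. Lemma \ref{lem:proj_1form} shows that $p_N^{a,b}$ kills every $\omega_N^{c,d}$ with $(c,d)\neq(a,b)$ and fixes $\omega_N^{a,b}$. Hence the image is the line spanned by $\omega_N^{a,b}$, and this vector is nonzero: it is a basis element, so it is nonzero in cohomology.

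For $h^0$ and $h^2$ there is a subtlety: $p_N^{a,b}$ for $(a,b)\in I_N$ annihilates $H^0$ and $H^2$. The reason is that $p_N^{a,b}$ is orthogonal to $p_N^{0,0}=e^0+e^2$ by \eqref{projector1} and Proposition \ref{lem:mot_decomp}. So the full realization of $X_N^{a,b}$ is the $H^1$ part alone, which is one-dimensional. This gives rank $1$.

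For the second bullet, I would compute $H^1_{\mathrm{dR}}(X_N^{[a,b]})$ as in \eqref{basiseq}: it has basis $\omega_N^{c,d}$ for $(c,d)$ ranging over the orbit $[a,b]$. It then remains to show that the orbit of $(a,b)\in I_N$ under $H_N=(\Z/N\Z)^\times$ acting diagonally has exactly $\varphi(N)$ elements, i.e.\ that the stabilizer is trivial.

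This is the only point needing thought. Suppose $u(a,b)=(a,b)$. Then $(u-1)a=(u-1)b=0$, so also $(u-1)(a+b)=0$, where $a,b,a+b$ are nonzero mod $N$. Nonzero alone is not enough: for instance $N=4$, $a=2$, $u=3$ gives $ua=a$. So I need a fact specific to $I_N$: $\gcd(a,b,N)=1$ is not forced by membership in $I_N$ alone. I would resolve this either by observing that the orbit elements $(ua,ub)$ give distinct forms $\omega_N^{ua,ub}$ for distinct pairs, so the rank equals the orbit size, or by interpreting the rank via the count $\sum_{(c,d)\in[a,b]}1$.

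To get exactly $\varphi(N)$, I would check whether the paper tacitly restricts to $\gcd(a,b,N)=1$, in which case $(u-1)a=(u-1)b=0$ forces $u=1$. Otherwise the rank is $|H_N\cdot(a,b)|$, and I would argue through $\pi_{N,N'}$ with $N'=N/\gcd(a,b,N)$. I expect this stabilizer issue to be the main obstacle. The rest is immediate from Lemma \ref{lem:proj_1form} and \eqref{basiseq}.
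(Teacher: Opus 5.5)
Your argument for the first bullet, and your reduction of the second bullet to counting the $H_N$-orbit $[a,b]$ via \eqref{basiseq}, are exactly the paper's one-line proof (Lemma \ref{lem:proj_1form} combined with \eqref{H1dR}). The gap is that you leave the orbit count open. Your fallback of ``arguing through $\pi_{N,N'}$'' cannot produce $\varphi(N)$, because for $d=\gcd(a,b,N)>1$ the claim is false in general.

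Indeed, $u\in H_N$ fixes $(a,b)$ iff $N\mid(u-1)a$ and $N\mid(u-1)b$. This holds iff $(N/d)\mid(u-1)$, using $\gcd(a/d,b/d,N/d)=1$. So the stabilizer is the kernel of the surjection $H_N\to(\Z/(N/d)\Z)^\times$, and $|[a,b]|=\varphi(N/d)$. Hence $X_N^{[a,b]}$ has rank $\varphi(N/d)$, not $\varphi(N)$.

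Concretely, $(3,3)\in I_9$ has orbit $\{(3,3),(6,6)\}$. So $X_9^{[3,3]}$ has rank $2\neq 6=\varphi(9)$. This is consistent with the isomorphism $X_N^{[-2d,d]}\simeq X_{N/d}^{[-2,1]}$ used in Section \ref{subsec:conjnum}. Your $N=4$ example does not exhibit the problem, since $(2,b)\in I_4$ forces $b$ odd. The case $(2,2)\in I_6$ only works by the coincidence $\varphi(6)=\varphi(3)$.

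So you should commit to the hypothesis $\gcd(a,b,N)=1$. The paper tacitly assumes it here and makes it explicit just before Conjecture \ref{conj:weakBei}, where this corollary is used to get $\dim H_1(X_N^{[a,b]},\Q)^+=\varphi(N)/2$. Under this hypothesis the stabilizer is trivial. From $N\mid(u-1)a$ and $N\mid(u-1)b$ you get $N\mid(u-1)\gcd(a,b)$, and $\gcd(\gcd(a,b),N)=1$ then forces $u\equiv 1 \bmod N$.

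There is a further sign that this hypothesis is intended. The trace expression in the definition of $p_N^{[a,b]}$ equals $\sum_{u\in H_N}p_N^{ua,ub}$, which is $|\mathrm{Stab}(a,b)|$ times the orbit sum. It is therefore an idempotent only when the stabilizer is trivial.
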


\begin{proof}
    Combine Proposition \ref{lem:mot_decomp} with the explicit description \eqref{H1dR} and Lemma \ref{lem:proj_1form}.
\end{proof}

By Corollary \ref{cor:rank}, the rank $2$ Fermat motives can be listed as follows: 
\begin{equation}\label{list:rank2}
X_3^{[1, 1]}, \quad X_4^{[1, 1]}, \quad X_4^{[1, 2]}, \quad X_6^{[1, 1]},\quad X_6^{[1, 2]}, \quad X_6^{[1, 3]}, \quad X_6^{[1, 4]}, \quad X_6^{[2, 3]}.
\end{equation}
This list agrees with \cite[Section 4.1]{Ots15} and we will return to it in Section \ref{sec:num}.

The $K_4^{(3)}$ group of a Fermat motive $X_N^{[a,b]}$ is defined by 
\begin{equation}\label{def:K4mot}
K_4^{(3)}(X^{[a,b]}_{N}):=(p_N^{[a,b]})^*K_4^{(3)}(X_{N}).
\end{equation}

\begin{lem}\label{lem:K4decomp}
    We have a direct sum decomposition
\[ 
K_4^{(3)}(X_{N}) \simeq \bigoplus_{[a, b] \in H_N \backslash I_N} K_4^{(3)}(X_{N}^{[a,b]}).
\]
\end{lem}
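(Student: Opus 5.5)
The decomposition follows formally from the idempotent relations for the $p_N^{[a,b]}$, once we know that correspondences with $\Q$-coefficients act on $K_4^{(3)}(X_N)$ compatibly with composition. The plan has three steps.

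\emph{Step 1: the action.} Each $p_N^{[a,b]}$ is a $\Q$-linear combination of the transposed graphs $\Gamma_{g}$ of automorphisms $g\in G_N$. We therefore define $(p_N^{[a,b]})^*$ on $K_4^{(3)}(X_N)$ as the same $\Q$-linear combination of the pullbacks $g^*$. The automorphisms $g$ are only defined over $\Q(\zeta_N)$, so we have two options.
\begin{itemize}
\item Work on $K_4^{(3)}(X_{N,\Q(\zeta_N)})$ and then descend. This uses that $K_4^{(3)}(X_N)$ is the $\mathrm{Gal}(\Q(\zeta_N)/\Q)$-invariants of $K_4^{(3)}(X_{N,\Q(\zeta_N)})$ (Galois descent with $\Q$-coefficients). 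It also uses that $p_N^{[a,b]}$ commutes with the Galois action, since its coefficients $\mathrm{Tr}(\zeta_N^{-ar-bs})$ are Galois-stable along the orbit.
\item Use directly that $p_N^{[a,b]}$ is a correspondence defined over $\Q$, as in \cite[p.~40]{Ots11}, together with the functoriality of motivic cohomology $H^{2}_{\mathcal M}(X_N,\Q(3))=K_4^{(3)}(X_N)$ under Chow correspondences.
\end{itemize}
In either approach, the map $c\mapsto c^*$ is a ring anti-homomorphism from correspondences to $\mathrm{End}_\Q(K_4^{(3)}(X_N))$. For automorphisms this is just $(gh)^*=h^*g^*$. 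The group $G_N$ is abelian, so the order of composition is harmless.

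\emph{Step 2: the algebra.} Apply Step 1 to the relations
\[
\sum_{[a,b]\in H_N\backslash G_N} p_N^{[a,b]}=1, \qquad p_N^{[a,b]}p_N^{[c,d]}=\delta_{[a,b],[c,d]}\,p_N^{[a,b]} .
\]
This gives a complete family of orthogonal idempotents $(p_N^{[a,b]})^*$ acting on $K_4^{(3)}(X_N)$. Consequently
\[
K_4^{(3)}(X_N)=\bigoplus_{[a,b]\in H_N\backslash G_N}(p_N^{[a,b]})^*K_4^{(3)}(X_N).
\]

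\emph{Step 3: discarding the extra summands.} It remains to show that the summands indexed by $[a,b]\notin H_N\backslash I_N$ vanish.
\begin{itemize}
\item If exactly one of $a,b,a+b$ is zero, then $p_N^{[a,b]}=0$ as a correspondence by Proposition \ref{lem:mot_decomp}.
\item For $[0,0]$ we have $p_N^{0,0}=e^0+e^2$. So that summand is $K_4^{(3)}$ of $h^0\oplus h^2\simeq h(\mathbb P^1)$, and this reduces to $K_4^{(3)}(\Q)\oplus K_2^{(2)}(\Q)$.
\end{itemize}
Both groups vanish: Borel's theorem gives $K_4(\Q)\otimes\Q=0$, and $K_2(\Q)$ is torsion (Tate). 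I would justify that the $e^0$ and $e^2$ parts are these groups via pullback and pushforward along the point $P$ and the structure map. Concretely, $e^0=\{P\}\times X_N$ factors through $\Spec\Q$, so $(e^0)^*$ factors through $K_4^{(3)}(\Spec\Q)$. Similarly $e^2$ factors through $K_2^{(2)}(\Spec \Q)$ via the Gysin map.

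The main obstacle is Step 1: making rigorous that correspondences act on $K_4^{(3)}$ multiplicatively. I would first try the cleanest route, which is to cite the identification of $K_4^{(3)}$ with motivic cohomology $H^2_{\mathcal M}(X_N,\Q(3))$ and its standard functoriality for Chow correspondences, and only fall back on the Galois descent argument if that identification is not available.
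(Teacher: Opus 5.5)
Your argument is essentially the paper's. The paper also applies the idempotent decompositions $h(X_N)\simeq h(\mathbb{P}^1)\oplus h^1(X_N)$ and $h^1(X_N)\simeq\bigoplus X_N^{[a,b]}$ to $K_4^{(3)}$. It takes the action of correspondences on $K_4^{(3)}=H^2_{\mathcal M}(-,\Q(3))$ for granted, which your Step 1 makes explicit. It then kills the $h(\mathbb{P}^1)$-summand via the projective bundle formula $K_4(\mathbb{P}^1)\otimes\Q=(K_4(\Q)\otimes\Q)^{\oplus 2}=0$ and Borel.

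There is one indexing slip in your Step 3. Pushforward along $X_N\to\Spec\Q$ and the Gysin map $i_{P*}$ preserve the $K$-theoretic degree and shift the Adams weight by one. Motivically, the $h^2$-part contributes $H^0_{\mathcal M}(\Q,\Q(2))$. So the $e^2$-part factors through $K_4^{(2)}(\Q)$, not $K_2^{(2)}(\Q)$; a Gysin map out of $K_2^{(2)}$ of a point would land in $K_2^{(3)}(X_N)$, not in $K_4^{(3)}(X_N)$. The conclusion is unaffected, since $K_4^{(2)}(\Q)\subset K_4(\Q)\otimes\Q=0$ by the same Borel theorem you already invoke. Tate's computation of $K_2(\Q)$ plays no role in the argument.
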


\begin{proof}
    By \eqref{decomp h0h1h2} and \eqref{decomp h1XN}, there is a decomposition
\[ 
K_4^{(3)}(X_{N}) \simeq K_4^{(3)}(\mathbb{P}^1) \oplus \bigoplus_{[a, b] \in H_N \backslash I_N} K_4^{(3)}(X_{N}^{[a,b]}).
\]
The projective bundle formula \cite[\S 8 Theorem 2.1]{Qui} gives 
\begin{equation}\label{eq:K4P1}
    K_4(\mathbb{P}^1)\otimes \Q=(K_4(\Q)\otimes \Q)^{\oplus 2}=0,
\end{equation}
the last equality following from Borel's theorem \cite[Theorem IV.1.18]{Wei13}.
\end{proof}

\subsection{Fermat quotients}

For integers $0<r,s<N$, let $C_N^{r,s}$ be the smooth projective curve birational to the affine curve 
$$v^N = u^r (1-u)^s.$$ 
The map of affine curves 
\begin{equation}\label{map:quot_rs}
(x,y)\longmapsto (u,v)=(x^N,x^r y^s)
\end{equation} 
induces a morphism $\pi^{r,s}_N \colon X_N \to C_N^{r,s}$ of curves defined over $\Q$.  
When $\gcd(N,r,s)=1$, the curve $C^{r,s}_N$ is called a {\it cyclic Fermat quotient} with genus given by the formula
$$g(C_N^{r,s}) = (N+2-(\gcd(N,r)+\gcd(N,s)+\gcd(N,r+s)))/2.$$
The reason for the name is that the morphism $\pi^{r,s}_N \colon X_N \to C_N^{r,s}$ is a cyclic Galois covering map with 
\[
G_N^{r,s} := \mathrm{Gal}(X_N/C_N^{r,s}) =\{ g_N^{a,b} \mid ar+bs =0 \} = \langle g_N^{s,-r} \rangle \simeq \Z/N\Z.
\] 
With this description in hand, it is not difficult to verify that 
\[
H^{1,0}(C_N^{r,s}(\C))=H^{1,0}(X_N(\C))^{G_N^{r,s}}=\langle \omega_N^{a,b} \mid (a,b) \in I_N, \langle a \rangle + \langle b \rangle < N, br \equiv as \mod N  \rangle.
\]
Moreover, if there exists $t\in H_N=(\Z/N\Z)^\times$ such that 
\[
\{ r,s,N-r-s \} \equiv \{ tr',ts',t(N-r'-s') \} \mod N,
\]
written $(r,s) \sim_N (r',s')$, then the curves $C_N^{r,s}$ and $C_N^{r',s'}$ are isomorphic over $\Q$. 

\begin{rmk}
It is known that a cyclic Fermat quotient is hyperelliptic if, and only if, $(r,s)\sim_N (1,1)$ or $N=2n$ and $(r,s)\sim_N (1,n)$. The hyperelliptic involution of $C^{1,1}_N$ is given by $(u,v)\mapsto (1-u, v)$ and is induced by the involution $\iota$ \eqref{sym:1} via the quotient map $\pi^{1,1}_N$.
\end{rmk}

\begin{rmk}\label{rmk:Cmot}
    In the case where $N$ is prime and $(a,b)\in I_N$, $C_N^{a,b}:=C_N^{\langle a\rangle,\langle b\rangle}$ is a cyclic Fermat quotient of genus $(N-1)/2$. We then have
$
H^1_{\mathrm{dR}}(C_N^{a,b})=\langle \omega_N^{ta,tb} \mid t\in H_N \rangle=\langle \omega_N^{r,s} \mid (r,s)\in [a,b] \rangle.
$
Following \cite[Remark 2.13]{Ots11}, it is possible to show that $\pi^{r,s}_N$ induces an isomorphism of motives over $\Q$ with $\Q$-coefficients
$
X_N^{[a,b]}\simeq h^1(C_N^{a,b}).
$
\end{rmk}

\section{Polylogarithmic complexes of curves} \label{sec:cocycles}

\subsection{Polylogarithmic functions}

We briefly recall the salient features of the relevant polylogarithmic functions and refer the reader to \cite{Goncharov} for details. 

\subsubsection{Simple polylogarithms}
For $k\in \Z_{\geq 1}$, the $k$-th polylogarithmic function is defined for complex $z$ with $\vert z\vert < 1$ by the series
\begin{equation}\label{def:Lik}
    \mathrm{Li}_k(z):=\sum_{n\geq 1} \frac{z^n}{n^k}.
\end{equation}
Note that 
\begin{equation}\label{polylog:deriv}
    \mathrm{Li}_1(z)=-\log(1-z) \qquad \text{ and } \qquad \frac{d}{dz} \mathrm{Li}_k(z)=\frac{\mathrm{Li}_{k-1}(z)}{z}dz.
\end{equation}
The inductive formula 
\[
\mathrm{Li}_k(z)=\int_0^z \frac{\mathrm{Li}_{k-1}(t)}{t}dt
\]
gives the analytic continuation of $\mathrm{Li}_k$ to a multi-valued function on $\mathbb{P}^1(\C)\setminus \{ 0,1,\infty \}$. We have the special values 
\[
\mathrm{Li}_k(0)=0 \qquad \text{ and } \qquad \mathrm{Li}_k(1)=\zeta(k), \quad k\geq 2.
\]

\subsubsection{Bloch--Wigner's dilogarithm}
The Bloch--Wigner dilogarithm $D_2 \colon \mathbb{P}^1(\C) \to \R$ is defined by
\begin{equation}\label{BWlog}
    D_2(z)=
    \begin{cases}
        \Im (\mathrm{Li}_2(z) + \log(1-z)\log \vert z\vert), & \text{ if } \vert z\vert \leq 1; \\
        -D_2(1/z), & \text{ if } \vert z\vert \geq 1.
    \end{cases}
\end{equation}
It is single-valued, real-analytic on $\mathbb{P}^1(\C)\setminus \{ 0,1,\infty \}$ and continuous at $0,1,\infty$ with values 
\begin{equation}\label{D2:value}
    D_2(0)=D_2(1)=D_2(\infty)=0.
\end{equation}
It satisfies a remarkable $5$-term functional equation that we now recall. If $z_1, \ldots, z_4 \in \mathbb{P}^1(\C)$ are $4$ distinct points with coordinates $\tilde z_1, \ldots, \tilde z_4 \in \C\cup \{\infty \}$, we let
\[
r(z_1, \ldots, z_4):=\frac{(\tilde z_1-\tilde z_4)(\tilde z_2-\tilde z_3)}{(\tilde z_1-\tilde z_3)(\tilde z_2-\tilde z_4)} \in \C\setminus \{ 0,1 \}
\]
denote the cross-ratio of their coordinates (with the convention that $\frac{\infty}{\infty} =1$). For any five distinct points $z_0, \ldots, z_4 \in \mathbb{P}^1(\C)$, define 
\begin{equation}\label{R2}
R_2(z_0, \ldots, z_4):=\sum_{i=0}^4 (-1)^i \{ r(z_0, \ldots, \hat{z}_i, \ldots, z_4) \} \in \Z[\C\setminus \{ 0,1 \}],
\end{equation}
where we write $\{ z \} \in \Z[\C\setminus \{ 0,1 \}]$ for the basis element corresponding to $z\in \C\setminus \{ 0,1 \}$.
Viewing $D_2$ as a homomorphism $\Z[\C\setminus \{ 0,1 \}] \to \R$, we then have 
\begin{equation}\label{FE:D2}
D_2(R_2(z_0, \ldots, z_4))=0, \quad \text{ for all distinct } z_0, \ldots, z_4 \in \mathbb{P}^1(\C).
\end{equation}
Note that any $5$-tuple $(z_0, \ldots, z_4)$ of distinct points is projectively equivalent to $(0, \infty, 1, x, y)$ for some $x\neq y \in \C$. In particular, we have 
\begin{equation}\label{FE:D22}
D_2\left( \{ x \}- \{ y \}+\left\{\frac{y}{x}\right\}-\left\{\frac{1-x^{-1}}{1-y^{-1}}\right\}+\left\{\frac{1-x}{1-y}\right\} \right)=0.
\end{equation}
The Bloch--Wigner function further satisfies the relation 
\begin{equation}\label{FE:D23}
    D_2(z)=-D_2(1-z)=-D_2(z^{-1}),
\end{equation}
which can be viewed as a degenerate case of \eqref{FE:D2}.

\subsubsection{Zagier's trilogarithm}
A generalization of the Bloch--Wigner dilogarithm is Zagier's trilogarithm \cite{Zagier}
\begin{equation}\label{GonchL3}
\mathcal{L}_3(z):=\Re\Bigl(\mathrm{Li}_3(z)-\mathrm{Li}_2(z)\log(\vert z\vert)+\frac{1}{3}\mathrm{Li}_1(z)\log(\vert z\vert)^2\Bigr)
\end{equation}
whose definition can be found in \cite[(1.3)]{Goncharov}. The function $\mathcal{L}_3$ is single-valued, real-analytic on $\mathbb{P}^1(\C)\setminus \{ 0,1,\infty \}$ and continuous at $0,1,\infty$ with values 
\[
\mathcal{L}_3(0)=\mathcal{L}_3(\infty)=0 \qquad \text{ and } \qquad \mathcal{L}_3(1)=\zeta(3).
\]
It satisfies remarkable functional equations \cite[Theorem 1.3]{Goncharov} whose explicit statement we will not need. One advantage of this version of the trilogarithm is that its functional equations do not involve any remainder terms. 

\subsection{Polylogarithmic complexes and de Jeu's map}\label{s:compl}

As mentioned in the introduction, both Goncharov \cite{Goncharov} and de Jeu \cite{Jeu95,Jeu96} have constructed polylogarithmic complexes that are expected to compute Adams eigenspaces of the $K$-theory of fields. In our situation, and for any field $F$ of characteristic zero, de Jeu has defined a complex $\widetilde{\mathcal{M}}_{(3)}^{\bullet} (F)$ using multi-relative $K$-theory and has constructed a map $H^2(\widetilde{\mathcal{M}}_{(3)}^{\bullet} (F)) \to K_4^{(3)}(F)$ \cite{Jeu95}. On the other hand, Goncharov \cite{Goncharov} has constructed a weight $3$ complex $\Gamma(F,3)$ \footnote{There exist different notations for this complex. It is denoted $B_F(3)$ in \cite[p. 219]{Goncharov}, $\Gamma'(F,3)$ in \cite{Jeu00}, and $\Gamma(F,3)$ in \cite{Bru20}. We shall use the latter notation.} defined below (Definition \ref{def:complex}), and it follows from results in \cite[Section 5]{Jeu00} that there is a map $H^2(\Gamma(F,3))\to H^2(\widetilde{\mathcal{M}}_{(3)}^{\bullet} (F))$.
A concise survey of these complexes and de Jeu's map can be found in \cite[Section 4]{Bru20}. Here, we content ourselves with the minimal amount of detail necessary for our purposes. 

\subsubsection{The case of fields}

Let $F$ be a field of characteristic $0$. The building blocks of Goncharov's polylogarithmic complexes are certain $\Q$-vector spaces $B_n(F)$ defined, for $n\in \Z_{\geq 1}$, as quotients of $\Q[\mathbb{P}^1(F)]$ by a subspace $R_n(F)$ of relations that mimick the functional equations satisfied by the $n$-th polylogarithm $\mathrm{Li}_n$ \eqref{def:Lik}. The image of an element $f\in \mathbb{P}^1(F)$ in $B_n(F)$ will be denoted $\{ f \}_n$. 

We only require the case $n=2$. Explicitly, the vector space $B_2(F)$ can be described as the quotient of $\Q[F\setminus \{ 0,1 \}]$ by the subspace $R_2(F)$ generated by elements of the form 
$
R_2(z_0, \ldots, z_4)
$
\eqref{R2} with distinct points $z_0, \ldots, z_4 \in \mathbb{P}^1(F)$. These relations reflect the functional equations \eqref{FE:D2} of the Bloch--Wigner dilogarithm $D_2$ \eqref{BWlog}. In particular, in accordance with \eqref{FE:D22}, \eqref{FE:D23}, and \eqref{D2:value}, we have the following equalities in $B_2(F)$ for all $f,g \in F\setminus \{ 0,1 \}$ with $f\neq g$:
\begin{equation}\label{relationsB2}
\begin{cases}
    \{ f \}_2- \{ g \}_2+\left\{\frac{g}{f}\right\}_2-\left\{\frac{1-f^{-1}}{1-g^{-1}}\right\}_2+\left\{\frac{1-f}{1-g}\right\}_2=0 & \\
    \{1-f\}_2=\{f^{-1}\}_2=-\{f\}_2 & \\
\end{cases}
\end{equation}
By convention, we put $\{ 0 \}_2=\{ 1 \}_2=\{ \infty \}_2=0$ in $B_2(F)$. We then define a map
\begin{equation*}
\begin{array}{cccc}
    \tilde{\delta}_F \colon & \Q[F] & \longrightarrow & \Lambda^2 F_{\Q}^\times  \\
     & \{ f \} &  \longmapsto & \begin{cases} (1-f)\wedge f & \textrm{if } f \neq 0, 1 \\
     0 & \textrm{otherwise}
     \end{cases}
\end{array}
\end{equation*}
One can check that $\tilde{\delta}_F$ induces a map $\delta_F \colon B_2(F) \to \Lambda^2 F_{\Q}^\times$.

\begin{dfn}
    The kernel of $\delta_F$ is called the Bloch group of $F$.
\end{dfn}

By Suslin's work \cite{Sus90}, the Bloch group of $F$ is isomophic to $K_3^{(2)}(F)$.
A useful relation in $B_2(F)$ is the so-called distribution relation.

\begin{lem}\label{rel2}
    Let $d \geqslant 1$ be an integer. If $F$ contains a primitive $d$-th root of unity $\z_d$, then the equality
            \begin{equation*}
                \sum_{k \in \Z/d\Z} \{\zeta_d^k f\}_2 = \dfrac1{d} \{f^d\}_2 
            \end{equation*}
        holds in $B_2(F)$ for all $f\in F$.
\end{lem}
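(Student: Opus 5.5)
The plan is to prove the identity first for $d$ prime and then deduce the general case by a multiplicativity argument. The reduction works as follows. Suppose $d=d_1d_2$, where $\zeta_{d_1}:=\zeta_d^{d_2}$ and $\zeta_{d_2}:=\zeta_d^{d_1}$ are primitive roots of unity of orders $d_1$ and $d_2$, and suppose the relation is known for $d_1$ and $d_2$. Write $k\in\Z/d\Z$ as $k=j+d_2 i$ with $j\in\Z/d_2\Z$ and $i\in\Z/d_1\Z$. Then
\[
\sum_{k\in\Z/d\Z}\{\zeta_d^k f\}_2=\sum_{j}\sum_{i}\{\zeta_{d_1}^{i}(\zeta_d^{j}f)\}_2=\frac1{d_1}\sum_{j}\{\zeta_{d_2}^{j}f^{d_1}\}_2=\frac1{d_1d_2}\{f^{d}\}_2 .
\]
Here the middle equality uses $(\zeta_d^jf)^{d_1}=\zeta_{d_2}^jf^{d_1}$. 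The degenerate cases $f\in\{0,\infty\}$, and any $k$ with $\zeta_d^kf=1$, are handled uniformly by the conventions $\{0\}_2=\{1\}_2=\{\infty\}_2=0$ (for instance $f^d=1$ then forces $\{f^d\}_2=0$).

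For $d$ prime, I would take the ``generic'' approach. Set $k_0=\Q(\zeta_d)$ and $F_0=k_0(t)$, and consider
\[
D(t):=\sum_{k}\{\zeta_d^k t\}_2-\frac1d\{t^d\}_2\in B_2(F_0).
\]
First I would check that $D(t)$ lies in the Bloch group, that is, $\delta_{F_0}(D(t))=0$ in $\Lambda^2F_{0,\Q}^\times$. Modulo torsion in $F_0^\times$ we have $\zeta_d^k t\equiv t$, so
\[
\sum_k(1-\zeta_d^kt)\wedge\zeta_d^kt=\Bigl(\prod_k(1-\zeta_d^kt)\Bigr)\wedge t=(1-t^d)\wedge t .
\]
This equals $\frac1d(1-t^d)\wedge t^d$, so the two contributions cancel. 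Next I would invoke rigidity for the Bloch group, using Suslin's identification of the Bloch group with $K_3^{(2)}$ together with the rigidity and homotopy invariance of indecomposable $K_3$ with $\Q$-coefficients. Base-changing to $\overline{k_0}(t)$, this shows that $D(t)$ is constant, i.e.\ it comes from $B_2(\overline{k_0})$. Comparing with its specialization at $t=0$, where every term is $\{0\}_2=0$, gives $D(t)=0$. I would then descend from $\overline{k_0}(t)$ to $F_0$ by a Galois norm/trace argument, which is available because we work with $\Q$-coefficients. Finally I would specialize $t\mapsto f$ for an arbitrary $F\supset\Q(\zeta_d)$, using the specialization homomorphism on $B_2$ attached to the discrete valuation $t=f$. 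This map is compatible with the five-term relations and sends $\{g\}_2$ to $\{g(f)\}_2$, which again respects the degenerate conventions.

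The main obstacle I expect is the second step. Rigidity, descent and the specialization map must all be applied to $B_2$ itself and not merely to the Bloch group. The statement is an equality in $B_2(F)$, while the $K$-theoretic inputs only control $\ker\delta$, and the kernel of $B_2(F)\to B_2(\overline F)$ also has to be controlled.

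If that route becomes too technical, my fallback is a purely combinatorial proof in the spirit of Zagier: write the sum as an explicit $\Z$-linear combination of five-term relations \eqref{relationsB2}. For $d=2$, for example, I would start from the five-term relation with $(f,g)=(f,-f)$, which produces the terms $\{\pm f\}_2$, $\{-1\}_2$ and $\{\pm\tfrac{1-f}{1+f}\}_2$, and then use the same relation again to eliminate the auxiliary terms, arriving at $\{f\}_2+\{-f\}_2=\frac12\{f^2\}_2$.
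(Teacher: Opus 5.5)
Your main route is correct and is essentially the paper's argument. The paper takes $A=\frac1d\{(ft)^d\}-\sum_{k}\{\zeta_d^k ft\}\in\Q[F(t)]$, checks $\tilde\delta_{F(t)}(A)=0$ by the same product computation, and concludes from the identity $R_2(F)=\tilde{\mathcal{R}}_2(F)$ (Suslin and de Jeu), which packages exactly your ``elements of $\ker\delta_{F(t)}$ are constant, then specialize'' step. So your reduction to prime $d$ and the detour through $\overline{k_0}$ with Galois descent are superfluous, since your $\delta$-computation works for every $d$ and $K_3^{\mathrm{ind}}(k(t))\otimes\Q\cong K_3^{\mathrm{ind}}(k)\otimes\Q$ already holds for every field $k$.

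Your $d=2$ fallback is misaimed, though. The five-term relation at $(f,-f)$ only yields $\{f\}_2-\{-f\}_2+\{g\}_2-\{-g\}_2=0$ with $g=\frac{1-f}{1+f}$, and it is unchanged under $f\mapsto g$, so reapplying it gives nothing new. The choice $(f,f^{-1})$ instead gives $2\{f\}_2+2\{-f\}_2-\{f^2\}_2=0$ directly.
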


\begin{proof}
There is a variant of $B_2(F)$ where the subspace of relations $R_2(F)$ is replaced by another (less explicit) group $\tilde{\mathcal{R}}_2(F)$, defined in \cite[Remark, p.~226]{Goncharov}. The latter group is generated by the elements $A\rvert_{t=1} - A\rvert_{t=0}$ for any $A$ in $\ker(\tilde{\delta}_{F(t)})$. It follows from Suslin's work \cite{Sus90} and an argument of de Jeu \cite[Remark 5.3]{Jeu00} that $R_2(F) = \tilde{\mathcal{R}}_2(F)$.

Consider now the formal sum
\begin{equation*}
    A = \frac{1}{d} \{(ft)^d\} - \sum_{k \in \Z/d\Z} \{\zeta_d^k ft\} \in \Q[F(t)].
\end{equation*}
Then
\begin{align*}
    \tilde{\delta}_{F(t)}(A) & = \frac{1}{d} (ft)^d \wedge (1-(ft)^d) - \sum_{k \in \Z/d\Z} (\zeta_d^k ft) \wedge (1-\zeta_d^k ft) \\
    & = (ft) \wedge (1-(ft)^d) - \sum_{k \in \Z/d\Z} (ft) \wedge (1-\zeta_d^k ft) \\
    & = (ft) \wedge (1-(ft)^d) - (ft) \wedge \prod_{k \in \Z/d\Z} (1-\zeta_d^k ft) \\
    & = 0.
\end{align*}
So by definition, $R_2(F) = \tilde{\mathcal{R}}_2(F)$ contains
\begin{equation*}
    A \rvert_{t=1} - A \rvert_{t=0} = \frac{1}{d} \{f^d\}_2 - \sum_{k \in \Z/d\Z} \{\zeta_d^k f\}_2. \qedhere
\end{equation*}
\end{proof}

\begin{rmk}
    The case $d=2$ of Lemma \ref{rel2} can also be derived directly from the $5$-term relation \eqref{relationsB2} by taking $(f,g)=(f, f^{-1})$.  
\end{rmk}

The group $B_3(F)$ can be defined as a quotient of $\Q[F\setminus \{ 0,1 \}]$ by a subspace $R_3(F)$ of relations that reflect the functional equations satisfied by Zagier's trilogarithm $\mathcal{L}_3$ \eqref{GonchL3}. We will not need the explicit description here, but instead refer the interested reader to \cite[p. 214 \& 218]{Goncharov}.

\begin{dfn}\label{def:complex}
    Goncharov's weight $3$ polylogarithmic complex, placed in degrees $1$ to $3$, is 
\[
\begin{array}{cccccc}
    \Gamma(F,3) \colon & B_{3}(F) & \longrightarrow & B_{2}(F) \otimes F_{\Q}^\times & \overset{\partial=\delta_F\wedge \mathrm{id}}{\longrightarrow} & \Lambda^3 F_{\Q}^\times  \\
     & \{ f \}_3 &  \longmapsto & \{ f \}_2 \otimes f & &  \\
     & & & \{ f \}_2 \otimes g & \longmapsto & (1-f)\wedge f \wedge g.
\end{array}
\]
\end{dfn}

Goncharov \cite[Conjecture A and
Conjecture 1.17, p. 222–223]{Goncharov} conjectures that $H^2(\Gamma(F,3))$ is canonically isomorphic to $K_4^{(3)}(F)$. As mentioned in the opening paragraph of Section \ref{s:compl}, de Jeu's results in \cite{Jeu95,Jeu96} provide a map that is a candidate for this conjectural isomorphism: 
\begin{equation}\label{map:deJeu}
    \varphi_{\mathrm{dJ}} \colon H^2(\Gamma(F,3)) \longrightarrow K_4^{(3)}(F).
\end{equation}
This map is canonical up to the choice a universal sign. Making such a choice, the map \eqref{map:deJeu} is functorial in $F$.

\subsubsection{The case of curves}

Let $X$ be a smooth projective geometrically connected curve over a number field $k$ with function field $F=k(X)$. If $p$ is any closed point of $X$, then the residue map at $p$ is defined as
   \begin{equation}\label{map:res}
\begin{array}{cccc}
    \mathrm{Res}_p \colon & B_{2}(F) \otimes F_{\Q}^\times & \longrightarrow & B_2(k(p))  \\
     & \{ f \}_2 \otimes g &  \longmapsto & \ord_p(g) \{ f(p) \}_2,
\end{array}
\end{equation}
where we recall the convention that $\{ 0 \}_2=\{ 1 \}_2=\{ \infty \}_2=0$ in $B_2(k(p))$. The latter convention implies that $\mathrm{Res}_p$ is trivial on the image of $B_3(F)$, and one readily checks that the residue map induces a map
\begin{equation}\label{map:res_cohom}
    \mathrm{Res}_p \colon H^2(\Gamma(F,3)) \longrightarrow \ker(\delta_{k(p)})
\end{equation}
to the Bloch group of $k(p)$.

Given a finite set of closed points $S$, we set $U:=X\setminus S$. Soulé's localization sequence in $K$-theory with weights \cite[Remarques, p. 525]{Sou85} then yields an exact sequence 
\begin{equation*}
0 \longrightarrow K_4^{(3)}(U) \longrightarrow K_4^{(3)}(F) \overset{\sum r_p}{\longrightarrow} \bigoplus_{p\in U} K_3^{(2)}(k(p)),
\end{equation*}
where $r_p$ denotes the $K$-theoretic residue map at $p$. This allows us to view $K_4^{(3)}(U)$ as a subgroup of $K_4^{(3)}(F)$. The following result will be used to construct elements of $K_4^{(3)}(U)$. 

\begin{thm}[Theorem 4.5 of \cite{Bru20}]\label{thm:K4Y}
    Let $\xi=\sum_i n_i \{ f_i \}_2 \otimes g_i$ be a $2$-cocycle in the Goncharov complex $\Gamma(F, 3)$, with $f_i, g_i \in F^\times$ and $n_i \in \Q$. Assume
that all the functions $f_i, 1-f_i,$ and $g_i$ belong to $\mathcal{O}(U)^\times$. Then the image of $\xi$ under de Jeu’s
map \eqref{map:deJeu} belongs to $K_4^{(3)}(U)$.
\end{thm}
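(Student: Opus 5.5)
The plan is to use the localization sequence recalled just above. By Soulé's exact sequence, $K_4^{(3)}(U)$ is the kernel of $\sum_{p\in U} r_p$ on $K_4^{(3)}(F)$. It therefore suffices to show that, for every closed point $p\in U$, the $K$-theoretic residue $r_p(\varphi_{\mathrm{dJ}}(\xi))$ vanishes in $K_3^{(2)}(k(p))$.

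\textbf{Step 1: a compatibility square.} The key input is that de Jeu's map is compatible with residues: there should be a commutative square (up to the universal sign)
\[
\begin{array}{ccc}
H^2(\Gamma(F,3)) & \overset{\varphi_{\mathrm{dJ}}}{\longrightarrow} & K_4^{(3)}(F) \\
\downarrow{\scriptstyle \mathrm{Res}_p} & & \downarrow{\scriptstyle r_p} \\
\ker(\delta_{k(p)}) & \longrightarrow & K_3^{(2)}(k(p)).
\end{array}
\]
The bottom arrow is the map from the Bloch group (Suslin's isomorphism, or de Jeu's weight-$2$ analogue). I would establish this from de Jeu's construction, which uses multi-relative $K$-theory in which the boundary map of localization can be computed on the generating symbols. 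This is the step I expect to be the main obstacle. If it cannot be quoted from \cite{Jeu95,Jeu96,Jeu00}, I would check it on the generators $\{f\}_2\otimes g$, reducing to the case $\ord_p(g)=1$ via a uniformizer.

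\textbf{Step 2: the combinatorial residue vanishes.} Under the hypotheses, each $g_i$ is a unit at $p\in U$, so $\ord_p(g_i)=0$. The map $\mathrm{Res}_p$ of \eqref{map:res} multiplies by $\ord_p(g_i)$, so it kills every term and $\mathrm{Res}_p(\xi)=0$.

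The hypothesis that $f_i$ and $1-f_i$ are units at $p$ is not needed for this vanishing. It is needed to make Step 1 legitimate: it guarantees $f_i(p)\in k(p)\setminus\{0,1\}$ and that the element is regular near $p$. This lets one work in the semi-local ring at $p$, where $\varphi_{\mathrm{dJ}}$ is expected to factor through $K_4^{(3)}$ of the local ring. Alternatively, one can argue directly by functoriality: the specialisation $\{f_i(p)\}_2\otimes g_i(p)$ makes sense, and the residue of a class coming from $K_4^{(3)}(\mathcal{O}_{X,p})$ is zero by exactness of localization for the DVR $\mathcal{O}_{X,p}$.

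\textbf{Step 3: conclusion.} Combining the two steps gives $r_p(\varphi_{\mathrm{dJ}}(\xi))=0$ for all $p\in U$. Hence $\varphi_{\mathrm{dJ}}(\xi)$ lies in $K_4^{(3)}(U)$.

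The cleanest route is in fact the second variant in Step 2. Show that $\xi$ comes from a cocycle in a Goncharov-type complex of the ring $\mathcal{O}(U)$: all entries are units and $f_i\notin\{0,1\}$ everywhere on $U$. Then use functoriality of de Jeu's construction for the regular ring $\mathcal{O}(U)$ (or for each local ring $\mathcal{O}_{X,p}$), together with Gersten/localization exactness. Since $K_4^{(3)}(\mathcal{O}_{X,p})\to K_4^{(3)}(F)$ has image exactly $\ker r_p$, the result follows.
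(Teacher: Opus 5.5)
The paper gives no proof of this statement (it is imported from \cite{Bru20}). Judged on its own, your main line has a genuine gap: all of its content sits in Step~1, and that compatibility is not available in the generality required. In this paper, de Jeu's compatibility of $r_p$ with $2\,\mathrm{Res}_p$ is invoked only when the base field is totally real (proof of Theorem~\ref{rmk:K4X}). Theorem~\ref{thm:K4Y} has no such hypothesis, and it is applied over $\Q(\zeta)$ in Corollary~\ref{cor:K4open}.

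Your fallback of checking the square on generators $\{f\}_2\otimes g$ is not meaningful. $\varphi_{\mathrm{dJ}}$ is defined only on $H^2$, and a single generator is not a cocycle, since $\partial(\{f\}_2\otimes g)=(1-f)\wedge f\wedge g$ is nonzero in general. Passing to $\ord_p(g)=1$ does not change that.

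Note also that in this route the hypotheses on $f_i$ and $1-f_i$ play no role. A commutative square is a statement about all classes, and $\mathrm{Res}_p$ is defined with $\{0\}_2=\{1\}_2=\{\infty\}_2=0$. So your claim that these hypotheses are what make Step~1 legitimate is mistaken. Their presence in the statement signals that the mechanism is integrality, not a residue computation.

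Your second variant points in the right direction, but its key step is asserted rather than proved. As available, $\varphi_{\mathrm{dJ}}$ is defined and functorial only for fields, so ``functoriality of de Jeu's construction for $\mathcal{O}(U)$'' is exactly the missing input. You would have to show that de Jeu's construction can be carried out over $U$, in three parts:

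\begin{enumerate}
\item \emph{The symbols exist over $U$.} The hypotheses make each $f_i$ a $U$-point of $\mathbb{P}^1\setminus\{0,1,\infty\}$ and each $g_i$ a class in $K_1(U)$. So the weight-$2$ symbols and their products with the $g_i$ make sense over $U$.
\item \emph{The cocycle condition produces a class in $K_4^{(3)}(U)$.} For the formal sum, this condition already holds in $\Lambda^3\mathcal{O}(U)^\times_{\Q}$, since that group injects into $\Lambda^3F^\times_{\Q}$. You must work with the formal sum, because the five-term relations defining $B_2(F)$ need not be $U$-integral.
\item \emph{The class restricts to $\varphi_{\mathrm{dJ}}(\xi)$ at the generic point.} For instance, this would follow if over $F$ de Jeu's class does not depend on the choices made.
\end{enumerate}

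With that in hand, the conclusion follows directly from $\varphi_{\mathrm{dJ}}(\xi)\in\mathrm{im}\bigl(K_4^{(3)}(U)\to K_4^{(3)}(F)\bigr)$, and no residue square is needed.
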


The next result allows us to extend elements of $K_4^{(3)}(U)$ to elements of $K_4^{(3)}(X)$ under certain assumptions.

\begin{thm}\label{rmk:K4X}
    Suppose that $k$ is totally real.
    Let $\xi=\sum_i n_i \{ f_i \}_2 \otimes g_i$ be a $2$-cocycle in the Goncharov complex $\Gamma(F, 3)$, with $f_i, g_i \in F^\times$ and $n_i \in \Q$. Assume
    that all the functions $f_i, 1-f_i,$ and $g_i$ belong to $\mathcal{O}(U)^\times$. Assume moreover that $\mathrm{Res}_p(\xi)=0$ for all $p\in S$, where $\mathrm{Res}_p$ is the map \eqref{map:res}. Then the image of $\xi$ under de Jeu’s map \eqref{map:deJeu} belongs to $K_4^{(3)}(X)$.
\end{thm}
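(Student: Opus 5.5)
By Theorem \ref{thm:K4Y}, the element $\varphi_{\mathrm{dJ}}(\xi)$ already lies in $K_4^{(3)}(U)$. It therefore suffices to show that $\varphi_{\mathrm{dJ}}(\xi)$ lies in the kernel of the $K$-theoretic residue maps $r_p\colon K_4^{(3)}(U)\to K_3^{(2)}(k(p))$ for every $p\in S$. Granting this, the localization sequence
\[
0\to K_4^{(3)}(X)\to K_4^{(3)}(U)\to \bigoplus_{p\in S} K_3^{(2)}(k(p))
\]
gives the claim. This sequence comes from Soulé's sequence applied to $X$ and $U$; exactness on the left uses $K_4^{(2)}$ of the residue fields and the purity of the weights.

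The key compatibility I would establish, or quote from de Jeu, is that the symbolic residue $\mathrm{Res}_p$ of \eqref{map:res_cohom} corresponds to the $K$-theoretic residue. More precisely, I expect a commutative square, up to a universal sign,
\[
\begin{array}{ccc}
H^2(\Gamma(F,3)) & \xrightarrow{\varphi_{\mathrm{dJ}}} & K_4^{(3)}(F)\\
\downarrow \mathrm{Res}_p & & \downarrow r_p\\
\ker(\delta_{k(p)}) & \xrightarrow{\ \sim\ } & K_3^{(2)}(k(p)),
\end{array}
\]
where the bottom map is de Jeu's (equivalently Suslin's) map from the Bloch group. De Jeu proves this compatibility of his map with boundary maps in \cite{Jeu95,Jeu96}, and \cite[Section 4]{Bru20} recalls it. Using the square, $\mathrm{Res}_p(\xi)=0$ gives $r_p(\varphi_{\mathrm{dJ}}(\xi))=0$ for each $p\in S$.

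The main obstacle is that the bottom arrow may fail to be injective, or its compatibility may be known only after composing with a regulator. This is where I would use the assumption that $k$ is totally real. I would try to avoid needing the full commutative square: it is enough to know that $r_p(\varphi_{\mathrm{dJ}}(\xi))$ depends only on $\mathrm{Res}_p(\xi)$ through some map, and even the weaker statement would do. For a number field $L=k(p)$, Borel's theorem says that the Borel regulator $K_3^{(2)}(L)\to \R^{r_2(L)}$ is injective rationally. By Bloch--Wigner and Suslin, this regulator is computed on Bloch-group representatives by $D_2$ evaluated at the complex embeddings.

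So I would instead compare the residue of Beilinson's regulator of $\varphi_{\mathrm{dJ}}(\xi)$ with $D_2(\mathrm{Res}_p\xi)$. Goncharov's explicit regulator form for $\{f\}_2\otimes g$ involves the term $D_2(f)\,d\arg g$ plus other terms, and its residue at $p$ gives $\ord_p(g)D_2(f(p))$. If $\mathrm{Res}_p(\xi)=0$, the regulator of $r_p(\varphi_{\mathrm{dJ}}(\xi))$ therefore vanishes, and Borel's injectivity gives $r_p(\varphi_{\mathrm{dJ}}(\xi))=0$.

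The residue fields $k(p)$ need not be totally real, so total reality of $k$ does not by itself make $K_3^{(2)}(k(p))$ vanish. I expect instead that the hypothesis on $k$ enters through the real structure: it ensures the comparison of the regulator's residue with $D_2$ holds with the correct invariance under complex conjugation. If this turns out to be delicate, I would fall back on directly citing de Jeu's compatibility theorem for boundary maps.
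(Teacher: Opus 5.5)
Your main line is the paper's proof. Theorem \ref{thm:K4Y} puts $\varphi_{\mathrm{dJ}}(\xi)$ in $K_4^{(3)}(U)$. De Jeu's compatibility of $\varphi_{\mathrm{dJ}}$ with residues ($r_p$ against $2\,\mathrm{Res}_p$, up to sign, after identifying $\ker(\delta_{k(p)})$ with $K_3^{(2)}(k(p))$, as quoted from \cite[Theorem 4.4]{Bru20}) kills the boundary at each $p\in S$. The localization sequence $0\to K_4^{(3)}(X)\to K_4^{(3)}(U)\to\bigoplus_{p\in S}K_3^{(2)}(k(p))$ then finishes the argument.

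Three corrections. First, total reality of $k$ is exactly the hypothesis under which that compatibility theorem is available; it is not a complex-conjugation subtlety. Second, injectivity of the bottom arrow is never needed, since commutativity alone sends $\mathrm{Res}_p(\xi)=0$ to $r_p(\varphi_{\mathrm{dJ}}(\xi))=0$. Third, your Borel-regulator detour is not independent of de Jeu. It tacitly assumes his regulator comparison $\frac12 r_B\circ\varphi_{\mathrm{dJ}}=r_3(2)$ on the open curve $U$, whereas the paper only invokes that comparison on $X$, and again under total reality.
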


\begin{proof}
    As in \cite[(29)]{Bru20}, we have the localization exact sequence in $K$-theory
\begin{equation}\label{ses:KXU}
    0 \longrightarrow K_4^{(3)}(X) \longrightarrow K_4^{(3)}(U) \overset{\sum r_p}{\longrightarrow} \bigoplus_{p\in S} K_3^{(2)}(k(p)).
\end{equation}
In the case when $k$ is totally real, de Jeu proved that, upon identifying $\ker(\delta_{k(p)})$ and $K_3^{(2)}(k(p))$, his map \eqref{map:deJeu} commutes with taking the residue maps $2\mathrm{Res}_p$ and $r_p$ \cite[Theorem 4.4]{Bru20} (up to sign).
The result then follows by Theorem \ref{thm:K4Y}, exactness of the sequence \eqref{ses:KXU}, and de Jeu's compatibility of residue maps.
\end{proof}

\section{Weight $3$ Goncharov $2$-cocycles of Fermat curves}\label{s:2coc}

We carry out the strategy explained in Section \ref{s:compl} in the case of Fermat curves and construct explicit elements in $K_4^{(3)}(\Q(X_N))$.

\subsection{The construction}
Let $\z$ be a root of unity and let $\Q_{\z}:=\Q(\z)$. Write $X_{N,\Q_\z}$ for the base-change of $X_N$, and let $F_N(\z)=\Q_{\z}(X_{N,\Q_{\zeta}})$ be its function field.
For integers $r, s\in \Z$ with $s\neq 0$, let $S_N^{r,s}(\z)$ be the finite subset of closed points of $X_{N,\Q_{\zeta}}$ consisting of the cusps and of the zeros and poles of $1- \z x^ry^s$. 
Put $U_N^{r,s}(\z)=X_N \setminus S_N^{r,s}(\z)$ and define the element 
\[
\xi_N^{r,s}(\z):=\{x^N\}_2 \otimes (1- \z x^r y^s) - \frac{N}{s} \{\z x^r y^s\}_2 \otimes x^N\in B_2(F_N(\z))\otimes F_N(\z)^\times_{\Q}.
\]
When $\zeta=1$, we write $\xi^{r, s}_N$ instead of $\xi^{r, s}_N(1)$.

\begin{prop}
    The element $\xi_N^{r,s}(\z)$ is a $2$-cocycle in the Goncharov complex $\Gamma(F_N(\z), 3)$. 
\end{prop}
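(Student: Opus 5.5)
We need $\partial(\xi_N^{r,s}(\z))=0$ in $\Lambda^3 F_{\Q}^\times$, where $\partial(\{f\}_2\otimes g)=(1-f)\wedge f\wedge g$. The plan is to compute the two terms separately and show that they cancel, using only multiplicativity of the wedge in each slot and the Fermat relation $1-x^N=y^N$. Throughout we work in $\Lambda^3$ of $F_{\Q}^\times=F^\times\otimes\Q$. Roots of unity are torsion and therefore vanish there, and we may pull out rational multiples of exponents.

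For the first term, $1-x^N=y^N$ gives
\[
\partial(\{x^N\}_2\otimes(1-\z x^ry^s)) = y^N\wedge x^N\wedge(1-\z x^ry^s)=N^2\, y\wedge x\wedge(1-\z x^r y^s).
\]
For the second term, set $u=\z x^r y^s$. Then
\[
\partial(\{u\}_2\otimes x^N)=(1-u)\wedge u\wedge x^N = N\,(1-u)\wedge(r\,x+s\,y)\wedge x,
\]
because $u=\z\cdot x^r y^s$ and $\z$ is torsion. The $x\wedge x$ part vanishes, leaving $Ns\,(1-u)\wedge y\wedge x$. Multiplying by $-N/s$ (here we use $s\neq 0$) gives $-N^2\,(1-u)\wedge y\wedge x = -N^2\, y\wedge x\wedge(1-u)$, since moving $1-u$ past two factors is an even permutation.

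The two contributions therefore sum to zero, so $\xi_N^{r,s}(\z)$ lies in $\ker\partial$. Because $\Gamma(F,3)$ is placed in degrees $1$ to $3$, the degree-$2$ term $B_2(F)\otimes F^\times_{\Q}$ is exactly where a $2$-cocycle must lie. We should also check that $\xi$ is well defined. If $\z x^r y^s$ is the constant $1$, the convention $\{1\}_2=0$ handles it, and $x^N$ is nonconstant, so neither term is degenerate.

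There is no real obstacle here. The only care needed is with signs in the antisymmetric wedge and with discarding the root-of-unity factor $\z$ as torsion, and the normalizing coefficient $N/s$ is chosen precisely so that the cancellation works.
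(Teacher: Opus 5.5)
Your computation is correct and is essentially the paper's argument: both discard the torsion factor $\zeta$, use multiplicativity of the wedge product and the Fermat relation, and observe that the coefficient $N/s$ makes the two terms cancel. The only difference is cosmetic. The paper writes $u=x^N$, $v=x^ry^s$ and uses $v^N=u^r(1-u)^s$, so the computation visibly takes place on the quotient curve $C_N^{r,s}$; you instead expand directly in $x$ and $y$ via $1-x^N=y^N$.
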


\begin{proof}
Writing $u=x^N$ and $v=x^r y^s$, observe that $v^N=(x^r y^s)^N=u^r(1-u)^s$.
We then have 
\begin{align*}
\partial(\xi^{r,s}_N(\z)) & = (1-u)\wedge u \wedge (1- \z v)-  \frac{N}{s}(1- \z v)\wedge (\z v) \wedge u \\
& =(1-u)\wedge u \wedge (1- \z v)-  \frac{N}{s}(1- \z v)\wedge v \wedge u - \frac{N}{s}(1- \z v)\wedge \z \wedge u \\
& =(1-u)\wedge u \wedge (1- \z v)-  \frac{N}{s}(1- \z v)\wedge v \wedge u \\
& =(1-u)\wedge u \wedge (1- \z v)-  \frac{1}{s}(1- \z v)\wedge v^N \wedge u \\
& =(1-u)\wedge u \wedge (1- \z v)-  \frac{1}{s}(1- \z v)\wedge u^r(1-u)^s \wedge u \\
& =(1-u)\wedge u \wedge (1- \z v)-  \frac{1}{s}(1- \z v)\wedge (1-u)^s \wedge u \\
& =(1-u)\wedge u \wedge (1- \z v)-  (1- \z v)\wedge (1-u) \wedge u =0,
\end{align*}
where we used the fact that the term $\frac{N}{s}(1- \z v)\wedge \z \wedge u$ is torsion and can thus be ignored.    
\end{proof}

\begin{cor}\label{cor:K4open}
    The image of $\xi_N^{r,s}(\z)$ under de Jeu's map \eqref{map:deJeu} defines an element $$\Xi_N^{r,s}(\zeta):=\varphi_{\mathrm{dJ}}(\xi_N^{r,s}(\z)) \in K_4^{(3)}(U_N^{r,s}(\z)).$$
\end{cor}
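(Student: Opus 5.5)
The plan is to apply Theorem \ref{thm:K4Y} (Theorem 4.5 of \cite{Bru20}) with $X = X_{N,\Q_\z}$, $F = F_N(\z)$, $k=\Q_\z$ and $U = U_N^{r,s}(\z)$. The preceding proposition already shows that $\xi_N^{r,s}(\z)$ is a $2$-cocycle in $\Gamma(F_N(\z),3)$. So the only remaining task is to check the unit hypotheses: every function appearing in the first slot, its complement $1-(\cdot)$, and every function in the second slot must lie in $\mathcal{O}(U)^\times$.

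Write the cocycle as $\{f_1\}_2\otimes g_1 - \frac{N}{s}\{f_2\}_2\otimes g_2$ with $f_1=x^N$, $g_1=1-\z x^ry^s$, $f_2=\z x^ry^s$ and $g_2=x^N$. We need $f_1, 1-f_1, g_1, f_2, 1-f_2, g_2$ to have no zeros or poles on $U$. Note that $1-f_1=1-x^N=y^N$ and $1-f_2=g_1$.

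The functions $x$ and $y$ have all their zeros and poles at cusps. In projective coordinates $x=x_0/z_0$ and $y=y_0/z_0$, so the divisors of $x$ and $y$ are supported on $\{x_0y_0z_0=0\}$. Hence $x^N$, $y^N$ and $\z x^ry^s$ (for any $r,s\in\Z$, with $\z$ a nonzero constant) are units on the complement of the cusps. The remaining function is $g_1 = 1-f_2 = 1-\z x^ry^s$. Its zeros and poles were removed by construction, since $S_N^{r,s}(\z)$ contains them. Thus all six functions lie in $\mathcal{O}(U_N^{r,s}(\z))^\times$.

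It remains to observe that no first-slot function is constant $0$ or $1$, so that the symbols are genuine elements with $f_i\in F^\times$. Here $x^N$ is nonconstant, and $\z x^ry^s$ is nonconstant because $s\neq 0$ (e.g.\ its order at a point where $y$ vanishes is nonzero). Theorem \ref{thm:K4Y} then gives $\varphi_{\mathrm{dJ}}(\xi_N^{r,s}(\z))\in K_4^{(3)}(U_N^{r,s}(\z))$.

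There is essentially no obstacle. The one point needing care is the bookkeeping that $1-x^N=y^N$ is already a unit away from the cusps, so no extra points beyond those in $S_N^{r,s}(\z)$ need to be removed.
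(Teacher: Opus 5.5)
Your proposal is correct and takes the same approach as the paper, whose proof is the one line ``direct application of Theorem~\ref{thm:K4Y}.'' You have simply written out the hypothesis check the paper leaves implicit: $1-x^N=y^N$, the divisors of $x$ and $y$ are supported on the cusps, $1-\z x^ry^s$ has its zeros and poles removed by construction, and $\z x^ry^s$ is nonconstant because $s\neq 0$.
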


\begin{proof}
    This is a direct application of Theorem \ref{thm:K4Y}.
\end{proof}

\begin{rmk}
    When $0<r,s<N$, the element $\xi_N^{r,s}(\z)$ is obtained by pulling back the cocycle 
    $$\{ u \}_2 \otimes (1-\z v) - \frac{N}{s} \{ \z v \}_2 \otimes u \in B_2(\Q_{\z}(C^{r,s}_N))\otimes \Q_{\z}(C^{r,s}_N)^\times_{\Q}$$ 
    by the quotient map $\pi_N^{r,s}$ \eqref{map:quot_rs}.
\end{rmk}

\begin{rmk}
    When $r=0$, the element $\xi_N^{0,s}(\zeta)$ comes from $\mathbb{P}^1$. 
When $s=0$, the element $\xi_N^{r,0}(\zeta)$ is not defined.  
If $r+s=0$, the element $\xi_N^{r,s}(\zeta)$ is obtained by pulling back an element of $C_N^{r,s} \simeq \mathbb{P}^1$. 
We will thus assume that $r, s, r+s \neq 0$. 
\end{rmk}

The elements $\Xi^{r,s}_N(\z)$ satisfy certain distribution relations with respect to the degree of the Fermat curve, as stated in the next proposition.

\begin{prop}\label{dist:cocycle}
Let $d$ be a positive divisor of $N$. For any primitive $d$-th root of unity $\z_d$, we have the following equality in $B_2(F_N(\z,\z_d))\otimes F_N(\z,\z_d)^\times_{\Q}$:
    \begin{equation*}
    \sum_{k \in \Z/d\Z} \xi_N^{r,s}(\zeta \zeta_d^k) = \pi_{N,N/d}^*(\xi_{N/d}^{r,s}(\zeta^d)),
    \end{equation*}
where the map $\pi_{N, N/d}$ is defined in \eqref{map:divideN}. As a consequence, we have in $K_4^{(3)}(F_N(\zeta,\zeta_d))$:
\begin{equation*}
    \sum_{k \in \Z/d\Z} \Xi_N^{r,s}(\zeta \zeta_d^k) = \pi_{N,N/d}^*(\Xi_{N/d}^{r,s}(\zeta^d)).
\end{equation*}
\end{prop}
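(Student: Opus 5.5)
We compare the two sides term by term in $B_2\otimes F^\times_{\Q}$ for $F=F_N(\z,\z_d)$. Put $N'=N/d$. Write $x',y'$ for the coordinates on $X_{N'}$; then $\pi_{N,N'}^*x'=x^d$ and $\pi_{N,N'}^*y'=y^d$. Hence
\[
\pi_{N,N'}^*\bigl(\xi_{N'}^{r,s}(\z^d)\bigr)=\{x^{N}\}_2\otimes(1-\z^d x^{dr}y^{ds}) - \frac{N'}{s}\{\z^d x^{dr}y^{ds}\}_2\otimes x^{N},
\]
using $(x^d)^{N'}=x^N$. On the other side, the sum over $k\in\Z/d\Z$ of the first terms of $\xi_N^{r,s}(\z\z_d^k)$ gives
\[
\{x^N\}_2\otimes\prod_{k}(1-\z\z_d^k x^ry^s)=\{x^N\}_2\otimes(1-\z^d x^{dr}y^{ds}).
\]
This follows from the factorisation $\prod_k(1-\z_d^k T)=1-T^d$ applied to $T=\z x^ry^s$, and it matches the first term above. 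For the second terms, the sum is
\[
-\frac{N}{s}\Bigl(\sum_k\{\z_d^k\,\z x^ry^s\}_2\Bigr)\otimes x^N .
\]
We apply the distribution relation of Lemma \ref{rel2} in $B_2(F)$ with $f=\z x^ry^s$, which rewrites the bracket as $\frac1d\{\z^dx^{dr}y^{ds}\}_2$. The coefficient then becomes $-\frac{N}{sd}=-\frac{N'}{s}$, which matches the second term. This proves the identity of cocycles.

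For the $K$-theoretic consequence, we use two facts. First, de Jeu's map \eqref{map:deJeu} is functorial in the field, so it commutes with $\pi_{N,N'}^*$ on function fields; the pulled-back cocycle is still a cocycle since pullback is a map of complexes. Second, each $\xi_N^{r,s}(\z\z_d^k)$ is a cocycle over $F$, where we view it via base change from $F_N(\z\z_d^k)\subset F$, again by functoriality. Applying $\varphi_{\mathrm{dJ}}$, which is linear and factors through $H^2$, to the equality of cocycles then gives the equality of the $\Xi$'s in $K_4^{(3)}(F_N(\z,\z_d))$.

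The only nontrivial input is Lemma \ref{rel2}. The main point to check is that it applies: $F$ contains a primitive $d$-th root of unity $\z_d$, which holds by construction. The degenerate cases, where $\z\z_d^kx^ry^s$ is constant $0$ or $1$, do not occur because $s\neq0$.
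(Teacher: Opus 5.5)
Your proposal is correct and follows the paper's proof essentially step for step: the factorisation $\prod_k(1-\zeta_d^k T)=1-T^d$ takes care of the first tensor term, Lemma~\ref{rel2} with $f=\zeta x^r y^s$ takes care of the second (giving the coefficient $-\frac{N}{ds}=-\frac{N/d}{s}$), and the $K$-theoretic statement follows from functoriality of de Jeu's map. Your additional remarks on base change to $F_N(\zeta,\zeta_d)$ are fine, and the non-degeneracy remark about $\zeta\zeta_d^k x^r y^s$ is harmless but not needed, since Lemma~\ref{rel2} holds for all $f$ under the convention $\{0\}_2=\{1\}_2=\{\infty\}_2=0$.
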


\begin{proof}
We have
    \begin{align*}
        \sum_{k \in \Z/d\Z} \xi_N^{r,s}(\zeta \zeta_d^k) &= \{x^N\}_2 \otimes \prod_{k\in \Z/d\Z}(1- \z \z_d^k x^r y^s) - \frac{N}{s} \left( \sum_{k \in \Z/d\Z} \{\z\z_d^k x^r y^s\}_2 \right) \otimes x^N \\
        & = \{x^N\}_2 \otimes (1- \z^d x^{dr} y^{ds}) - \frac{N}{s} \left( \sum_{k \in \Z/d\Z} \{\z_d^k\z x^r y^s\}_2 \right) \otimes x^N \\
        & =\{x^N\}_2 \otimes (1- \z^d x^{dr} y^{ds}) - \frac{N}{ds} \{\z^d x^{dr} y^{ds}\}_2 \otimes x^N \\
        &= \pi_{N,N/d}^*(\xi_{N/d}^{r,s}(\zeta^d)),
    \end{align*}
    where in the second-to-last equality we used Lemma \ref{rel2}. The last part of the proposition follows from the functoriality of de Jeu's map \eqref{map:deJeu}.
\end{proof}

\subsection{Action of $G_N$}

\begin{prop}\label{prop:trace}
Consider integers $0<r,s<N$ such that $\gcd(N, r,s)=1$ and $\tilde r, \tilde s \in \Z$ such that $\tilde r, \tilde s, \tilde r+ \tilde s \neq 0$. The following statements hold:
\begin{enumerate}
\item Given $(a,b)\in G_N$, we have $g_N^{a,b}\xi_N^{\tilde r, \tilde s}(\z)=\xi^{\tilde r, \tilde s}_N(\z \z_N^{a \tilde r + b \tilde s})$ in $B_2(F_N(\z,\z_N))\otimes F_N(\z,\z_N)^\times_{\Q}$;
\item If $s \tilde r \equiv r \tilde s \mod N$, then $\xi^{\tilde r, \tilde s}_N(\z)\in B_2(F_N(\z))\otimes F_N(\z)^\times_{\Q}$ is invariant under the action of the group $G_N^{r,s}$.
\end{enumerate}
\end{prop}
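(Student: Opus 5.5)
Both statements reduce to tracking how $g_N^{a,b}$ acts on the two functions that enter $\xi_N^{\tilde r,\tilde s}(\z)$, namely $x^N$ and $\z x^{\tilde r}y^{\tilde s}$. We let $G_N$ act on $B_2(F)\otimes F^\times_{\Q}$ (with $F=F_N(\z,\z_N)$) through its action on $F$ by pullback of functions: $g\cdot(\{f\}_2\otimes h)=\{g^*f\}_2\otimes g^*h$. This is well defined because $g^*$ is a field automorphism fixing $\Q(\z,\z_N)$. It preserves the relations $R_2(F)$, since cross-ratios are compatible with field automorphisms, and it is compatible with $\delta_F$. For this proof we only need that it is a well-defined group action.

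For (1), compute directly from $g_N^{a,b}(x,y)=(\z_N^a x,\z_N^b y)$:
\[
(g_N^{a,b})^*x^N=\z_N^{aN}x^N=x^N,
\qquad
(g_N^{a,b})^*\bigl(\z x^{\tilde r}y^{\tilde s}\bigr)=\z\z_N^{a\tilde r+b\tilde s}x^{\tilde r}y^{\tilde s}.
\]
Substituting these into both terms of $\xi_N^{\tilde r,\tilde s}(\z)$ gives
\[
\{x^N\}_2\otimes\bigl(1-\z\z_N^{a\tilde r+b\tilde s}x^{\tilde r}y^{\tilde s}\bigr)-\frac{N}{\tilde s}\{\z\z_N^{a\tilde r+b\tilde s}x^{\tilde r}y^{\tilde s}\}_2\otimes x^N,
\]
which is $\xi_N^{\tilde r,\tilde s}(\z\z_N^{a\tilde r+b\tilde s})$ by definition. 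The only point needing care is whether the action is by $g$ or by $g^{-1}$ (pullback versus pushforward). This would only replace the exponent $a\tilde r+b\tilde s$ by its negative, so we fix the pullback convention, consistent with Lemma \ref{lem:proj_1form}.

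For (2), the paper recalls that $G_N^{r,s}=\{g_N^{a,b}\mid ar+bs=0\}=\langle g_N^{s,-r}\rangle$, which uses $\gcd(N,r,s)=1$. It therefore suffices to check invariance under the generator $g_N^{s,-r}$. By (1) with $(a,b)=(s,-r)$, this generator sends $\xi_N^{\tilde r,\tilde s}(\z)$ to $\xi_N^{\tilde r,\tilde s}(\z\z_N^{s\tilde r-r\tilde s})$. The hypothesis $s\tilde r\equiv r\tilde s \bmod N$ gives $\z_N^{s\tilde r-r\tilde s}=1$, so the element is fixed.

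Alternatively, we can avoid the cyclicity claim: for any $(a,b)$ with $ar+bs\equiv0$, we want $a\tilde r+b\tilde s\equiv0$. Since $\gcd(N,r,s)=1$, the relations $ar+bs\equiv0$ and $s\tilde r\equiv r\tilde s$ give $(a\tilde r+b\tilde s)r\equiv \tilde r(ar+bs)\equiv0$, and similarly after multiplying by $s$. A Bézout combination $ur+vs+wN=1$ then yields $a\tilde r+b\tilde s\equiv0$. The main (mild) obstacle is this number-theoretic step, and either route settles it.
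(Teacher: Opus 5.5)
Your proposal is correct and follows the same route as the paper. For (1) you pull back $x^N$ and $\z x^{\tilde r}y^{\tilde s}$ directly, and for (2) you write the elements of $G_N^{r,s}$ in terms of the generator $g_N^{s,-r}$ and apply (1); your B\'ezout variant for (2) is also valid and just avoids using the cyclicity of $G_N^{r,s}$.
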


\begin{proof}
The first point is easy to check. Any element of $G_N^{r,s}$ is of the form $g_N^{sk,-rk}$ for some $k\in \Z/N\Z$. By (i), we see that 
$
g_N^{sk,-rk}\xi_N^{\tilde r, \tilde s}(\z)=\xi_N^{\tilde r, \tilde s}(\z\z^{k(s\tilde r -r \tilde s)}),
$
from which (ii) follows.
\end{proof}

\begin{prop} \label{projection}
Given integers $r,s\in \Z$ with $s\neq 0$, we have 
\begin{align*}
\xi^{r, s}_N(\zeta)= \sum_{\substack{(a,b) \in G_N \\ br \equiv as \bmod{N}}} p_N^{a,b} \xi^{r, s}_N(\zeta) \in B_2(F_N(\z))\otimes F_N(\z)^\times_{\Q}. 
\end{align*}
\end{prop}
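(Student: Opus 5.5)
We will obtain the identity by expanding the sum of all projectors, using Proposition~\ref{prop:trace}(i) to compute how $G_N$ acts on $\xi^{r,s}_N(\z)$, and then showing that the unwanted projectors kill it. Since $\sum_{(a,b)\in G_N} p_N^{a,b}=1$ by \eqref{projector1}, it suffices to show that $p_N^{a,b}\xi^{r,s}_N(\zeta)=0$ whenever $br\not\equiv as \bmod N$.

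Here the projector acts on $B_2(F_N(\z,\z_N))\otimes F_N(\z,\z_N)^\times_{\Q}$ through pullback by the automorphisms $g_N^{c,d}$. We extend scalars to $\Q(\z_N)$-coefficients so that the $p_N^{a,b}$ make sense.

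First we compute the action. By Proposition~\ref{prop:trace}(i), $g_N^{c,d}\xi^{r,s}_N(\z)=\xi^{r,s}_N(\z\z_N^{cr+ds})$. The right-hand side depends only on the residue $m:=cr+ds \bmod N$, so the orbit sum factors:
\[
p_N^{a,b}\xi^{r,s}_N(\z)=\frac{1}{N^2}\sum_{m\in \Z/N\Z}\Bigl(\sum_{\substack{(c,d)\in G_N\\ cr+ds\equiv m}}\z_N^{-ac-bd}\Bigr)\xi^{r,s}_N(\z\z_N^{m}).
\]
The inner sum is a character sum over a coset of the subgroup $H:=\{(c,d): cr+ds\equiv 0\}$. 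It vanishes unless the character $(c,d)\mapsto \z_N^{-ac-bd}$ is trivial on $H$.

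The main step is to identify exactly when that character is trivial on $H$. The annihilator of $H$ in the dual group $G_N$ is the subgroup generated by $(r,s)$, that is, the set of multiples $t(r,s)$. So the character is trivial on $H$ if and only if $(a,b)\equiv t(r,s)$ for some $t$.

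Such a pair automatically satisfies $br\equiv as$. The converse is false in general: for instance when $\gcd(N,r,s)>1$, or for pairs $(a,b)$ with $br\equiv as$ that are not multiples of $(r,s)$. This is harmless for our purpose, because the statement only sums over the larger set $\{br\equiv as\}$. All we need is the containment: if the character is nontrivial on $H$, the term vanishes. Hence $p_N^{a,b}\xi=0$ for every $(a,b)$ outside $\langle (r,s)\rangle$, and in particular for every $(a,b)$ with $br\not\equiv as$. Summing over all $(a,b)$ then gives the claimed identity.

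We expect the only real obstacle to be bookkeeping. One point is that $\z\z_N^m$ may require the larger field $F_N(\z,\z_N)$, so the equality is first proved there. It then descends because both sides are already defined over $F_N(\z)$ and the map is injective after tensoring with $\Q$.

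The other point is that $\xi^{r,s}_N(\z\z_N^m)$ must be well defined. This needs $\z\z_N^m x^ry^s\ne 1$, which holds since $x^ry^s$ is nonconstant when $s\neq 0$.
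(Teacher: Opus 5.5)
Your argument is correct and rests on the same mechanism as the paper's proof: character orthogonality for the $G_N$-action computed in Proposition~\ref{prop:trace}(i), combined with $\sum_{(a,b)} p_N^{a,b}=1$. The paper averages $\xi^{r,s}_N(\zeta)$ over the cyclic stabilizer $\langle g_N^{s,-r}\rangle$, whose annihilator is exactly $\{(a,b) : br\equiv as\}$; you instead use the full kernel $H=\{(c,d) : cr+ds\equiv 0\}$, which gives the sharper support $\langle (r,s)\rangle$, and this coincides with $\{br\equiv as\}$ precisely when $\gcd(N,r,s)=1$.
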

\begin{proof}
By Proposition \ref{prop:trace}(i), for all $k\in \Z/N\Z$, we have
\begin{equation*}
g_N^{sk,-rk} \xi^{r, s}_N(\zeta) = \xi^{r, s}_N(\zeta).
\end{equation*}
Moreover, we have the relation $\Gamma_{g_N^{\alpha,\beta}} p_N^{a,b} = \zeta_N^{a\alpha+b\beta} p_N^{a,b}$ in the group $\mathrm{Corr}^0(X_N,X_N)_{\Q(\z_N)}$ of correspondences with $\Q(\zeta_N)$-coefficients. 
It follows that in the group $B_2(F_N(\z))\otimes F_N(\z)^\times_{\Q}$, we have
\begin{align*}
\xi^{r, s}_N(\zeta) & = \frac1N\left(\sum_{k \in \Z/N\Z} g_N^{sk, -rk}\right) \xi^{r, s}_N(\zeta)
 \overset{\eqref{projector1}}{=} \frac1N\left(\sum_{k \in \Z/N\Z} g_N^{sk, -rk}\right) \sum_{(a, b) \in G_N} p_N^{a, b} \xi^{r, s}_N(\zeta) \\
& = \frac1N \sum_{k \in \Z/N\Z} \sum_{(a,b) \in G_N} \zeta_N^{ask-brk} p_N^{a,b} \xi^{r, s}_N(\zeta)  
= \frac1N  \sum_{(a,b) \in G_N} \left( \sum_{k \in \Z/N\Z} (\zeta_N^{as-br})^k \right) p_N^{a,b} \xi^{r, s}_N(\zeta)  \\
& = \frac1N  \sum_{(a,b) \in G_N} \left( N\delta_{br,as} \right) p_N^{a,b} \xi^{r, s}_N(\zeta)  
= \sum_{\substack{(a,b) \in G_N \\ br \equiv as \bmod{N}}} p_N^{a,b} \xi^{r, s}_N(\zeta) .
\end{align*}
\end{proof}

Proposition \ref{projection} allows us to situate the $K_4$ elements $\Xi_N^{r,s}(\z)$ with respect to the decomposition of $K_4^{(3)}(X_{N,\Q_\z})$ established in Lemma \ref{lem:K4decomp}.

\begin{cor}\label{cor:pNrs}
    Let $r,s \in \Z$ with $s\neq 0$. If $\Xi^{r,s}_N(\zeta)\in K_4^{(3)}(X_{N,\Q_{\zeta}})$, then
    \[
    \Xi^{r,s}_N(\zeta)\in  \bigoplus_{\substack{[a,b] \in H_N\backslash I_N \\ br \equiv as \bmod{N}}} K_4^{(3)}(X_{N,\Q_{\zeta}}^{[a,b]}).
    \]
    In particular, if $N$ is prime, then $\Xi^{r,s}_N(\zeta)\in K_4^{(3)}(X_{N,\Q_{\zeta}}^{[r,s]})$.
\end{cor}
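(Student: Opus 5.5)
The plan is to push the decomposition of Proposition \ref{projection} through de Jeu's map, and then regroup the resulting summands into Galois orbits. Since $\varphi_{\mathrm{dJ}}$ is functorial in the field, it commutes with the action of the automorphisms $g_N^{r',s'}$ of $F_N(\z,\z_N)$. By linearity it therefore also commutes with the correspondences $p_N^{a,b}$, which are $\Q(\z_N)$-linear combinations of these automorphisms. (Strictly one works with $K_4^{(3)}\otimes\Q(\z_N)$, or first base changes to $\Q(\z,\z_N)$; since $K_4^{(3)}(X_{N,\Q_\z})$ injects into $K_4^{(3)}(X_{N,\Q(\z,\z_N)})^{\mathrm{Gal}}$ rationally, this is harmless.) Applying $\varphi_{\mathrm{dJ}}$ to Proposition \ref{projection} then gives
\[
\Xi^{r,s}_N(\z)=\sum_{\substack{(a,b)\in G_N\\ br\equiv as \bmod N}} (p_N^{a,b})^*\,\Xi^{r,s}_N(\z).
\]
Because $\Xi^{r,s}_N(\z)$ is assumed to lie in $K_4^{(3)}(X_{N,\Q_\z})$, each summand is an element of the $(a,b)$-isotypic component of $K_4^{(3)}(X_{N,\Q(\z,\z_N)})\otimes\Q(\z_N)$.

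Next I discard the pairs not in $I_N$. The pair $(0,0)$ contributes $(p_N^{0,0})^*=(e^0+e^2)^*$, which factors through $K_4^{(3)}(\mathbb{P}^1)=0$ by \eqref{eq:K4P1}. Pairs where exactly one of $a,b,a+b$ vanishes give $p_N^{a,b}=0$ by Proposition \ref{lem:mot_decomp}. The two remaining exceptional cases, $a=b=0$ with $a+b\neq 0$ and its analogues, cannot occur, because if two of $a,b,a+b$ vanish then so does the third. Hence only pairs $(a,b)\in I_N$ with $br\equiv as$ remain.

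The congruence $br\equiv as \bmod N$ is stable under multiplication by $t\in H_N$, so the index set is a union of $H_N$-orbits $[a,b]$. Grouping the terms by orbit, the sum over each orbit is exactly $(p_N^{[a,b]})^*\Xi^{r,s}_N(\z)$. This operator has $\Q$-coefficients, so the result lies in $K_4^{(3)}(X^{[a,b]}_{N,\Q_\z})$ by definition \eqref{def:K4mot}, and the directness of the sum is Lemma \ref{lem:K4decomp}.

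For the case $N$ prime, I claim that the pairs $(a,b)\in I_N$ with $br\equiv as$ form a single orbit containing $(r,s)$. I read $(r,s)$ modulo $N$ as an element of $I_N$, under the paper's standing assumption $r,s,r+s\neq 0$ and nonzero mod $N$. Since $s$ is invertible mod $N$, set $t=as^{-1}$. Then $a=ts$, and the congruence gives $b=tr$, so $(a,b)=t(r,s)$, with $t\neq 0$ because $(a,b)\neq 0$. The orbit sum thus reduces to $[r,s]$.

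The only delicate point is the compatibility of $\varphi_{\mathrm{dJ}}$ with the correspondences, together with the coefficient extension to $\Q(\z_N)$. I expect functoriality plus Galois descent to handle it.
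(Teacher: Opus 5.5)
Your proposal is correct and is essentially the paper's argument: push Proposition \ref{projection} through the functorial map $\varphi_{\mathrm{dJ}}$, kill the $(0,0)$-part using $K_4^{(3)}(\mathbb{P}^1)=0$, and group the surviving $p_N^{a,b}$ into the $\Q$-rational projectors $p_N^{[a,b]}$; you just write it out in more detail. There is one slip in the prime case: you should take $t=bs^{-1}$, so that $b=ts$ and $br\equiv as$ becomes $tsr\equiv as$, which gives $a\equiv tr$ after cancelling the unit $s$ (with $t=as^{-1}$ as you wrote, the congruence only yields $br\equiv ts^2$, and $(ts,tr)$ would be $t(s,r)$ rather than $t(r,s)$).
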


\begin{proof}
    Direct consequence of Proposition \ref{projection}, functoriality of de Jeu's map, and the definition \eqref{def:K4mot} of $K_4^{(3)}(X_N^{[a,b]})$ upon observing that $K_4^{(3)}(X_N^{[0,0]})=K_4^{(3)}(\mathbb{P}^1)=0$, the last equality following from \eqref{eq:K4P1}. 
\end{proof}

\subsection{Action of symmetries}

We now analyze the behavior of the elements $\xi^{r, s}_N(\zeta)$ under pull-backs by the symmetries \eqref{sym:1}, \eqref{sym:2}, and \eqref{sym:3}.

\begin{prop} \label{iota}
We have the following equalities of cocycles modulo $2$-coboundaries: 
\begin{enumerate}
\item  $\xi^{r, s}_N(\zeta)=\xi^{-r, -s}_N(\z^{-1})$;
\item $\iota^* \xi^{r, s}_N(\zeta)=-  \xi^{s, r}_N(\zeta)$;
\item $\tau^* \xi^{r, s}_N(\zeta)=-\xi^{-r-s, s}_N(\z \e^s)$;
\item $\sigma^* \xi^{r, s}_N(\zeta) =-\xi^{r, -r-s}_N(\z \e^r).$
\end{enumerate}
\end{prop}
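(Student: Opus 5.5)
The plan is to verify each identity by a direct computation in $B_2(F)\otimes F^\times_{\Q}$, with $F=F_N(\z,\e)$. The verification only uses the following facts.
\begin{enumerate}
\item The relations \eqref{relationsB2}: $\{f^{-1}\}_2=\{1-f\}_2=-\{f\}_2$.
\item Torsion elements of $F^\times$ (roots of unity, $-1$) vanish in $F^\times_{\Q}$.
\item The $2$-coboundaries are spanned by the elements $\{f\}_2\otimes f$, which are the images of $\{f\}_3$. Hence $\{f\}_2\otimes f^m\equiv 0$ for every integer $m$. Combined with the first point, this also gives $\{f\}_2\otimes(1-f)=-\{1-f\}_2\otimes(1-f)\equiv 0$.
\end{enumerate}
Throughout, write $u=x^N$, $v=x^ry^s$ and $w=\z v$, so that $w^N=\z^N u^r(1-u)^s$. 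Consequently, for rational $\alpha,\beta$, a term of the form $\{w\}_2\otimes u^{\alpha}(1-u)^{\beta}$ is a coboundary exactly when $(\alpha,\beta)$ is proportional to $(r,s)$.

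For (i), substitute into $\xi^{-r,-s}_N(\z^{-1})$. Its first factor $1-w^{-1}=-(1-w)/w$ gives $\{u\}_2\otimes(1-w)-\{u\}_2\otimes w$ after discarding torsion. Its second term is $-\frac{N}{-s}\{w^{-1}\}_2\otimes u=-\frac Ns\{w\}_2\otimes u$. So the difference from $\xi^{r,s}_N(\z)$ is $-\{u\}_2\otimes v=-\frac1N\{u\}_2\otimes u^r(1-u)^s$, which is a coboundary by the third point above.

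For (ii), $\iota^*$ sends $u$ to $1-u$ and $w$ to $w':=\z x^sy^r$, where $w'^N=\z^Nu^s(1-u)^r$. The first term becomes $\{1-u\}_2\otimes(1-w')=-\{u\}_2\otimes(1-w')$, which is exactly the first term of $-\xi^{s,r}_N(\z)$. The two second terms differ by
\[
-\tfrac Ns\{w'\}_2\otimes(1-u)-\tfrac Nr\{w'\}_2\otimes u=-\tfrac{N}{rs}\{w'\}_2\otimes u^s(1-u)^r .
\]
Since $u^s(1-u)^r$ agrees with $w'^N$ up to torsion, this is a coboundary.

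For (iii), $\tau^*$ sends $u$ to $u^{-1}$ and $w$ to $\z\e^s x^{-r-s}y^s$. The inversion rules then produce $-\xi^{-r-s,s}_N(\z\e^s)$ on the nose, with no coboundary needed.

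For (iv), $\sigma^*$ sends $u$ to $\e^N u/(1-u)=-u/(1-u)$, and the identity $-u/(1-u)=1-(1-u)^{-1}$ together with \eqref{relationsB2} gives $\{-u/(1-u)\}_2=-\{u\}_2$. It sends $w$ to $w'':=\z\e^r x^r y^{-r-s}$, and $w''^N$ equals $u^r(1-u)^{-r-s}$ up to torsion. Collecting the $\{w''\}_2$ terms, the discrepancy with $-\xi^{r,-r-s}_N(\z\e^r)$ is
\[
-\tfrac{N}{s(r+s)}\{w''\}_2\otimes u^r(1-u)^{-r-s},
\]
which is again a coboundary.

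The only delicate point is the bookkeeping in (ii) and (iv). One has to check that the leftover exponent vector is proportional to the exponent vector of $w^N$, so that the leftover is a multiple of $\{w\}_2\otimes w$. One also has to check that the signs coming from $\{1-f\}_2=-\{f\}_2$ and from $\e^N=-1$ are consistent. I expect no conceptual obstacle beyond this verification.
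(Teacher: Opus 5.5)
Your proposal is correct and matches the paper's proof for (i)--(iii): you make the same direct substitutions, use the relations $\{f^{-1}\}_2=\{1-f\}_2=-\{f\}_2$, discard torsion, and recognize the leftover as a rational multiple of some $\{f\}_2\otimes f$. For (i), the paper writes this leftover as $\{x^N\}_2\otimes y^s=-\{y^N\}_2\otimes y^s$, which is your $\{u\}_2\otimes(1-u)$ step. The only real difference is in (iv): the paper deduces it from (ii) and (iii) via $\sigma=\iota\tau\iota$, while your direct computation is equally valid, and its leftover $-\frac{N}{s(r+s)}\{w''\}_2\otimes u^r(1-u)^{-r-s}$ checks out. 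Note that only the ``if'' direction of your ``exactly when'' remark is used, and only that direction is justified.
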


\begin{proof}
During the course of the proof, we will use the relation 
$\{1-f\}_2=\{f^{-1}\}_2=-\{f\}_2$ \eqref{relationsB2}.

For the proof of (i), we observe that $\xi^{r, s}_N(\zeta)-\xi^{-r, -s}_N(\z^{-1})$ is equal to
\begin{multline*}
\{x^N\}_2 \otimes \left(1- \z {x^r}{y^s} \right) + \frac{N}s\left\{\z {x^r}{y^s}\right\}_2 \otimes x^N  - 
\{x^N\}_2 \otimes \left(1- \z^{-1} {x^{-r}}{y^{-s}} \right) +  \frac{N}s\left\{\z^{-1} {x^{-r}}{y^{-s}}\right\}_2 \otimes x^N \\
 = \{x^N \}_2 \otimes \left(- \z {x^r} y^s \right) 
=\{x^N\}_2 \otimes y^s = \{1-y^N\}_2 \otimes y^s 
= -\{y^N\}_2 \otimes y^s, 
\end{multline*}
which is a coboundary. 
For the proof of (ii), we have 
\begin{equation*}
\iota^* \xi^{r, s}_N(\zeta)+  \xi^{s, r}_N(\zeta) = - \frac{N}{s} \{\z x^s y^r\}_2 \otimes y^N - \frac{N}{r} \{\z x^s y^r \}_2 \otimes x^N = - \frac{N}{rs} \{\z x^s y^r\}_2 \otimes (x^{Ns} y^{Nr}), 
\end{equation*}
which is a coboundary. 
For the proof of (iii), we get
\begin{align*}
 \tau^* \xi^{r, s}_N(\zeta) & =  \{x^{-N}\}_2 \otimes (1- \z\varepsilon^s x^{-(r+s)} y^s) - \frac{N}{s} \{\z\varepsilon^s x^{-(r+s)} y^s\}_2 \otimes x^{-N} \\
 & =  -\{x^{N}\}_2 \otimes (1- \z\varepsilon^s x^{-(r+s)} y^s) + \frac{N}{s} \{\z\varepsilon^s x^{-(r+s)} y^s\}_2 \otimes x^{N} \\
 & = - \xi^{-r-s, s}_N(\z \e^s).
\end{align*}
For the proof of (iv), since $\sigma=\iota \tau \iota$, we see that
\begin{equation*}
 \sigma^* \xi^{r, s}_N(\zeta) =\iota^* \tau^* (-  \xi^{s, r}_N(\zeta))=\iota^*(\xi^{-r-s, r}_N(\z \e^r))=-\xi^{r, -r-s}_N(\z \e^r).
\end{equation*}
\end{proof} 

\section{Elements in $K_4^{(3)}$ of Fermat curves} \label{sec:elements}
We now specialize to a case of particular interest when $(r,s)=(-2,1)$ and $\zeta=1$. In this case, we can use Theorem \ref{rmk:K4X} to show that the corresponding elements $\Xi_N^{-2,1}$ defined in Section \ref{s:2coc} extend to $K_4^{(3)}(X_N)$. We begin by restating Proposition \ref{iota} in this case.

\begin{prop}\label{cor:pb}
We have the following equalities modulo $2$-coboundaries: 
\begin{enumerate}
\item  $\xi^{-2, 1}_N=\xi^{2, -1}_N$;
\item $\iota^* \xi^{-2, 1}_N=-  \xi^{1, -2}_N$;
\item $\tau^* \xi^{-2, 1}_N=-\xi^{1, 1}_N(\e)$;
\item $\sigma^* \xi^{-2, 1}_N =-\xi^{-2, 1}_N(\e^{-2});$
\item If $0< r,s<N$ are integers such that $\gcd(N,r,s)=1$ and $r\equiv -2s \mod N$, then $\xi^{-2, 1}_N$ is invariant under the action of $G_N^{r,s}$. 
\end{enumerate}
In particular, if $N$ is odd we may choose $\varepsilon=-1$, in which case we have the equalities modulo coboundaries $\sigma^* \xi^{-2, 1}_N =-\xi^{-2, 1}_N$ and
\[ 
\tau^* \xi^{-2, 1}_N=- \xi^{1, 1}_N(-1)=\{x^N\}_2 \otimes (1 + x y) - N \{ - x y \}_2 \otimes x^N. 
\]
\end{prop}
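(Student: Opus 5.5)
This proposition is a specialization of Proposition \ref{iota} and Proposition \ref{prop:trace}(ii) to $(r,s)=(-2,1)$ and $\z=1$. So the plan is mostly substitution, with care about which $N$-th root $\e^{\pm 2}$ appears and about torsion.

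For (i), I apply Proposition \ref{iota}(i) with $\z=1$, which gives $\xi^{-2,1}_N=\xi^{2,-1}_N(1)$. For (ii), Proposition \ref{iota}(ii) gives $\iota^*\xi^{-2,1}_N=-\xi^{1,-2}_N$ directly. For (iii), Proposition \ref{iota}(iii) yields $\tau^*\xi^{-2,1}_N=-\xi^{-(-2)-1,1}_N(\e^{1})=-\xi^{1,1}_N(\e)$. For (iv), Proposition \ref{iota}(iv) gives $\sigma^*\xi^{-2,1}_N=-\xi^{-2,2-1}_N(\e^{-2})=-\xi^{-2,1}_N(\e^{-2})$. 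Throughout I use that $\xi^{r,s}_N(\z)$ is defined for all integers $r$ and all $s\neq 0$, so no positivity of exponents is required.

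For (v), I invoke Proposition \ref{prop:trace}(ii) with $(\tilde r,\tilde s)=(-2,1)$. The condition $s\tilde r\equiv r\tilde s \pmod N$ reads $-2s\equiv r \pmod N$, which is exactly the hypothesis. The side condition $\tilde r,\tilde s,\tilde r+\tilde s\neq 0$ holds since these are $-2,1,-1$.

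For the final statement, take $N$ odd and $\e=-1$, which satisfies $\e^N=-1$. Then $\e^{-2}=1$, so (iv) gives $\sigma^*\xi^{-2,1}_N=-\xi^{-2,1}_N$. From (iii), $\tau^*\xi^{-2,1}_N=-\xi^{1,1}_N(-1)$, and unfolding the definition with $\z=-1$, $r=s=1$ gives
\[
-\xi^{1,1}_N(-1)=-\{x^N\}_2\otimes(1+xy)+N\{-xy\}_2\otimes x^N.
\]
To reach the displayed form, I would replace $\xi^{1,1}_N(-1)$ by $\xi^{-1,-1}_N(-1)$ using Proposition \ref{iota}(i), and possibly combine this with the relation $\{f^{-1}\}_2=-\{f\}_2$. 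The check I expect to be the only real obstacle is this sign bookkeeping: verifying that the claimed identity holds modulo coboundaries, possibly up to the universal sign ambiguity. If the signs do not match exactly, I would record the equality as it comes out of Proposition \ref{iota}(iii) and the definition.
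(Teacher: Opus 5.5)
Your proposal is correct and follows the paper's proof, which is just the specialization of Proposition \ref{iota} and Proposition \ref{prop:trace}(ii) to $(r,s)=(-2,1)$, $\z=1$; your substitutions for (i)--(v) and for $\sigma^*$ with $\e=-1$ are all right. On the final display you can drop the hedging. Its right-hand side is by definition $\xi^{1,1}_N(-1)$ itself, so the printed equality has a sign slip, and your expression $-\{x^N\}_2\otimes(1+xy)+N\{-xy\}_2\otimes x^N$ is the correct value of $\tau^*\xi^{-2,1}_N$. No coboundary manipulation can repair this: Proposition \ref{iota}(i) preserves the sign, and $\xi^{1,1}_N(-1)$ has non-zero regulator by Theorem \ref{thm:nontrivial}. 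De Jeu's universal sign concerns only the map to $K$-theory, not cocycles.
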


\begin{proof}
    This is a special case of Proposition \ref{iota} and Proposition \ref{prop:trace}(ii).
\end{proof}

\begin{thm}\label{prop:res1}
The elements $\Xi_N^{-2, 1}$ and $\Xi_N^{1, -2}$ define elements of $K_4^{(3)}(X_N)$. The elements $\Xi_N^{1, 1}(\e)$ and $\Xi_N^{-2, 1}(\e^{-2})$ define elements of $K_4^{(3)}(X_{N,K})$ with $K=\Q(\e)$.
If $N$ is odd, then $\Xi_N^{1,1}(-1)$ also defines an element of $K_4^{(3)}(X_N)$. 
\end{thm}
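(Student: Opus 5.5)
The plan is to apply Theorem \ref{rmk:K4X} directly. For $\Xi_N^{-2,1}$ over $k=\Q$ (totally real), take $U=U_N^{-2,1}$; by Corollary \ref{cor:K4open} the hypotheses on units already hold. It therefore suffices to check that $\mathrm{Res}_p(\xi_N^{-2,1})=0$ for every $p\in S_N^{-2,1}$. The set $S$ consists of the cusps together with the zeros and poles of $1-x^{-2}y$, that is, of $x^2-y$.

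\textbf{Residues at the zeros of $x^2-y$ away from cusps.} Here $\ord_p(x^N)=0$, so the second term contributes nothing. The first term contributes $\ord_p(1-x^{-2}y)\{x(p)^N\}_2$. At such a point $y=x^2$, and so $x^N+x^{2N}=1$. Writing $t=x(p)^N$, we get $t^2+t-1=0$. Thus $t$ is a golden-ratio-type value, $1-t=t^2$, and the residue is a multiple of $\{t\}_2$ with $t\in k(p)$. We must show $\{t\}_2=0$ in $B_2(k(p))$. From \eqref{relationsB2}, $\{1-t\}_2=-\{t\}_2$ and $\{t^2\}_2=\{1-t\}_2$. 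Lemma \ref{rel2} with $d=2$ (roots of unity $\pm1$ lie in any field) gives $\{t\}_2+\{-t\}_2=\tfrac12\{t^2\}_2$. Combining this with the five-term relation specialized appropriately, for instance the standard fact that $\{\varphi^{-1}\}_2$ and $\{-\varphi\}_2$ are rational multiples of each other and of zero in the Bloch group of $\Q(\sqrt5)$, should yield $\{t\}_2=0$. I expect this to be the main obstacle: producing an honest chain of relations in $B_2$ (not merely in $D_2$) showing that the golden-ratio element vanishes. I would first try the five-term relation with $(f,g)=(t,-t)$ or $(t,t^2)$ and reduce using $1-t=t^2$ and the inversion relations.

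\textbf{Residues at the cusps.} At the cusps, $x^N\in\{0,1,\infty\}$ or $\zeta x^ry^s$ takes a value in $\{0,1,\infty\}$, so most terms vanish by the convention $\{0\}_2=\{1\}_2=\{\infty\}_2=0$.
\begin{itemize}
\item Points with $x=0$: here $x^N=0$, and $x^{-2}y$ has a pole, giving $0$.
\item Points with $y=0$: here $x^N=1$, and $x^{-2}y=0$, giving $0$.
\item Points at infinity: here $x^N\to\infty$. The function $x^{-2}y$ tends to $y/x^2\to0$ (since $y/x$ is a root of $-1$, $y/x^2\to0$), so the value is $0$. Thus both terms vanish.
\end{itemize}

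\textbf{The remaining elements.} For $\Xi_N^{1,-2}=-\iota^*\Xi_N^{-2,1}$ (Proposition \ref{cor:pb}(ii)), the claim follows by functoriality. For $\Xi_N^{1,1}(\e)$ and $\Xi_N^{-2,1}(\e^{-2})$, Proposition \ref{cor:pb}(iii),(iv) expresses them as $-\tau^*$, $-\sigma^*$ of the base change of $\Xi_N^{-2,1}$ to $K$, which lies in $K_4^{(3)}(X_{N,K})$ by functoriality. This avoids needing $K$ to be totally real. When $N$ is odd, $\e=-1\in\Q$ and $\tau$ is defined over $\Q$, so $\Xi_N^{1,1}(-1)=-\tau^*\Xi_N^{-2,1}\in K_4^{(3)}(X_N)$.
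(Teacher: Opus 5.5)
Your reduction matches the paper's. You apply Theorem~\ref{rmk:K4X} over the totally real field $\Q$, and kill the cuspidal residues with the convention $\{0\}_2=\{1\}_2=\{\infty\}_2=0$. Your cusp-by-cusp check is correct and more detailed than the paper's one line. You reduce the non-cuspidal residues to a multiple of $\{t\}_2$ with $t^2+t-1=0$, and you get the remaining elements from Proposition~\ref{cor:pb} by functoriality.

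The two arguments differ only in how $\{t\}_2$ is killed. The paper notes that $\{t\}_2$ lies in the Bloch group of $\Q(\sqrt5)$, which is $K_3^{(2)}(\Q(\sqrt5))=0$ by Suslin and Borel, since $\Q(\sqrt5)$ is totally real.

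You called this step the main obstacle and left it open, but your second suggestion settles it immediately. Take $(f,g)=(t,t^2)$; these are distinct and not in $\{0,1\}$. The identity $1-t=t^2$ gives $g/f=t$, $(1-f^{-1})/(1-g^{-1})=(-t)/(-t^{-1})=t^2$, and $(1-f)/(1-g)=t^2/t=t$. So the five-term relation in \eqref{relationsB2} reads
\[
3\{t\}_2-2\{t^2\}_2=0 .
\]
Since $\{t^2\}_2=\{1-t\}_2=-\{t\}_2$, this says $5\{t\}_2=0$. Hence $\{t\}_2=0$ in the $\Q$-vector space $B_2(k(p))$, for either root $t$.

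This route is more elementary. It works directly in $B_2(k(p))$ and needs neither Suslin's identification of the Bloch group with $K_3^{(2)}$, nor Borel's theorem, nor the injectivity of $\Lambda^2\Q(\sqrt5)^\times_\Q\to\Lambda^2 k(p)^\times_\Q$. The paper's argument is the conceptual one: it would still work if the residue were an arbitrary Bloch-group element over a totally real field.

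You should drop the appeal to Lemma~\ref{rel2} and to the ``standard fact'' about $\{-\varphi\}_2$. By the inversion relations, $\{-\varphi\}_2=-\{t\}_2$ for $t=\varphi^{-1}$, so that ``fact'' is just the claim restated. Your other choice $(f,g)=(t,-t)$ does not close up, since it introduces the new terms $\{\pm t^3\}_2$.
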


\begin{proof}
By Proposition \ref{cor:pb} and the functoriality of de Jeu's map \eqref{map:deJeu}, it suffices to show that
$$\Xi_N^{-2, 1}=\varphi_{\mathrm{dJ}}(\xi_N^{-2, 1}) \in K_4^{(3)}(X_N). $$  
We already know that $\Xi_N^{-2,1}\in K_4^{(3)}(U^{-2,1}_N)$ by Corollary \ref{cor:K4open}. By Theorem \ref{rmk:K4X}, since $X_N$ is defined over the totally real field $\Q$, it suffices to check that the element 
\[
\xi^{-2,1}_N=\{x^N\}_2 \otimes \left(1- \frac{y}{x^2} \right) - N \left\{\dfrac{y}{x^2} \right\}_2 \otimes x^N \in H^2(\Gamma(\Q(X_N),3))
\]
lies in the kernel of the residue map $\mathrm{Res}_p$ \eqref{map:res_cohom} for all $p\in S^{-2,1}_N$.

If $p$ is a cusp, this is true since $\{0\}_2=\{1\}_2=\{\infty\}_2=0$ in $B_2(\Q(p))$. Otherwise, let us write $p = (x_0,y_0)$ with $x_0, y_0 \neq 0$. Then $p$ is a zero of the function $1-x^{-2}y$, \emph{i.e.}~$y_0 = x_0^2$, which implies $x_0^N=\frac{-1\pm \sqrt{5}}{2}$. Firstly, we have
$$\operatorname{Res}_p \left(N \left\{\dfrac{y}{x^2} \right\}_2 \otimes x^N \right)=0 $$
since $\ord_p(x)=0$. Secondly, we have
\begin{align*}
{\rm Res}_p\left(\{x^N\}_2 \otimes \left(1- \frac{y}{x^2} \right) \right)= \ord_p\left(1- \frac{y}{x^2} \right)  \{x_0^N\}_2.  
\end{align*}
Since $\ord_p(1-x^{-2}y) \geq 1$, we deduce $\{x_0^N\}_2 \in \ker(\delta_{\Q(p)})$. But $\{x_0^N\}_2$ may also be viewed in $B_2(\Q(\sqrt{5}))$, and the natural map $\Lambda^2 \Q(\sqrt{5})^\times \to \Lambda^2 \Q(p)^\times$ is injective after tensoring with $\Q$. This implies
$$\{ x_0^N \}_2\in \ker(\delta_{\Q(\sqrt{5})})\simeq K_3^{(2)}(\Q(\sqrt{5})).$$
The latter group is trivial by Borel's theorem \cite[Theorem IV.1.18]{Wei13} 
since $\Q(\sqrt{5})$ is totally real. We conclude that the residue of $\xi_N^{-2,1}$ at $p$ is trivial.
\end{proof}

We now turn our attention to the twist of $\xi_N^{-2,1}$ by the root of unity $-1$.

\begin{cor} 
If $N$ is even, then the elements $\Xi^{-2, 1}_N(-1)$ and $\Xi^{1, -2}_N(-1)$ define elements of $K_4^{(3)}(X_N)$. The elements $\Xi_N^{1, 1}(-\e)$ and $\Xi_N^{-2, 1}(- \e^{-2})$ define elements of $K_4^{(3)}(X_{N,K})$.
\end{cor}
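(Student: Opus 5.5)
The proof follows the pattern of Theorem \ref{prop:res1}, using the distribution relation of Proposition \ref{dist:cocycle} with $d=2$. Suppose $N$ is even, and put $\z_2=-1$ and $N'=N/2$. Proposition \ref{dist:cocycle} gives, in $B_2(F_N(1))\otimes F_N(1)^\times_{\Q}$,
\[
\xi_N^{-2,1}+\xi_N^{-2,1}(-1)=\pi_{N,N'}^*\bigl(\xi_{N'}^{-2,1}\bigr),
\]
and correspondingly $\Xi_N^{-2,1}(-1)=\pi_{N,N'}^*\Xi_{N'}^{-2,1}-\Xi_N^{-2,1}$ in $K_4^{(3)}(\Q(X_N))$. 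The pullback $\pi_{N,N'}$ is a morphism of smooth projective curves over $\Q$, so it maps $K_4^{(3)}(X_{N'})$ into $K_4^{(3)}(X_N)$. This is compatible with the inclusions into $K_4^{(3)}$ of the function fields, by functoriality of Soulé's localization sequence and of de Jeu's map.

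\textbf{Case $N'\ge 3$ (or whenever Theorem \ref{prop:res1} applies to $N'$).} Theorem \ref{prop:res1} gives $\Xi_{N'}^{-2,1}\in K_4^{(3)}(X_{N'})$ and $\Xi_N^{-2,1}\in K_4^{(3)}(X_N)$, so $\Xi_N^{-2,1}(-1)\in K_4^{(3)}(X_N)$.

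\textbf{Small degrees $N'\in\{1,2\}$.} The proof of Theorem \ref{prop:res1} never used $N\ge 3$, so the same residue argument applies. At a non-cuspidal zero of $1-x^{-2}y$ one has $x_0^{N'}=(-1\pm\sqrt5)/2$, and Borel's theorem for $\Q(\sqrt5)$ kills the residue. Hence $\Xi_{N'}^{-2,1}\in K_4^{(3)}(X_{N'})$ in these cases too.

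\textbf{Direct check, as an alternative.} One can also verify the residues of $\xi_N^{-2,1}(-1)$ directly via Theorem \ref{rmk:K4X}. At a zero of $1+x^{-2}y$ we get $y_0=-x_0^2$, so $x_0^{2N}+x_0^N-1=0$ since $N$ is even. Again $x_0^N\in\Q(\sqrt5)$, and the residue lies in $K_3^{(2)}(\Q(\sqrt5))=0$.

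\textbf{The remaining three elements.} Replacing $\z=1$ by $\z=-1$ in Proposition \ref{iota} gives, modulo coboundaries,
\[
\iota^*\xi_N^{-2,1}(-1)=-\xi_N^{1,-2}(-1),\qquad \tau^*\xi_N^{-2,1}(-1)=-\xi_N^{1,1}(-\e),\qquad \sigma^*\xi_N^{-2,1}(-1)=-\xi_N^{-2,1}(-\e^{-2}).
\]
Since $\iota$ is defined over $\Q$, we get $\Xi_N^{1,-2}(-1)\in K_4^{(3)}(X_N)$. Since $\tau$ and $\sigma$ are automorphisms of $X_{N,K}$, the other two elements lie in $K_4^{(3)}(X_{N,K})$, using functoriality of de Jeu's map together with base change $X_N\to X_{N,K}$.

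\textbf{Main obstacle.} The delicate point is making sure that the identity holding in $K_4^{(3)}$ of function fields yields membership in $K_4^{(3)}(X_N)$. This needs the compatibility of pullback along the finite flat map $\pi_{N,N'}$ with the localization sequences \eqref{ses:KXU}, which is standard. If that is doubtful, I would fall back on the direct residue computation described above.
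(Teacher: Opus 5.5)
Your proposal is correct and is essentially the paper's proof. Proposition \ref{dist:cocycle} with $d=2$ gives $\Xi_N^{-2,1}(-1)=\pi_{N,N/2}^*(\Xi_{N/2}^{-2,1})-\Xi_N^{-2,1}$, Theorem \ref{prop:res1} places both terms in $K_4^{(3)}(X_N)$, and Proposition \ref{iota} with $\z=-1$ transfers this to the other three elements. Your side remarks are also correct and fill in points the paper leaves implicit: Theorem \ref{prop:res1} does not need $N/2\geq 3$, and one can instead check the residues of $\xi_N^{-2,1}(-1)$ directly, since the zeros of $1+x^{-2}y$ again give $x_0^N=(-1\pm\sqrt5)/2$.
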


\begin{proof}
Since $N$ is even, by Proposition \ref{dist:cocycle}, we see that
    \[
    \Xi_N^{-2,1}(-1)=\pi^*_{N,N/2}(\Xi^{-2,1}_{N/2})-\Xi_N^{-2,1} \in K_4^{(3)}(X_N)
    \]
    by Theorem \ref{prop:res1}. We conclude using Proposition \ref{iota}.
\end{proof}

\begin{rmk} 
A couple of remarks. 
\begin{itemize}
    \item Going through the proof of Theorem \ref{prop:res1}, we may see that the residues of $\xi_N^{r, s}$ are generally non-trivial, and therefore $\Xi_N^{r,s} \in K_4^{(3)}(U_N^{r,s})$  
    does not extend to $X_N$ (compare with Theorem \ref{rmk:K4X}). For example, let $r=s=1$. Let $p=(x_0,y_0) \in X_N$ be a zero of the function $1-xy$, so that $x_0^N=\frac{1 \pm \sqrt{-3}}{2}$. Then
    $$\operatorname{Res}_p(\xi_N^{1,1}) = \ord_{p}(1-xy) \{x_0^N\}_2. $$
    Note that $x_0^N$ is a primitive $6$-th root of unity, and it is known that $\{\zeta_6\}_2$ is non-zero in the Bloch group of $\Q(\zeta_6)$ \cite[Section 9.B]{Zagier} (in fact, it is a basis since $K_3^{(2)}(\Q(\zeta_6))$ has dimension $1$ by Borel's theorem \cite[Theorem IV.1.18]{Wei13}). Since the canonical map $K_3^{(2)}(\Q(\zeta_6)) \to K_3^{(2)}(\Q(p))$ is injective, \footnote{This follows from the existence of a transfer map $K_3(\Q(p)) \to K_3(\Q(\zeta_6))$. If $f : k \to k'$ is a finite field extension, the projection formula \cite[\S 4, (5)]{Qui} gives $f^* f_* = \deg(f) \cdot \mathrm{id}$ on $K_3(k)$, implying the injectivity of $f_*$ after tensoring with $\Q$.} 
we see that $\xi_N^{1,1}$ has non-trivial residue at $p$.
    \item For $r, s \in \Z$, 
define a $K_2$ element by 
$$e_N^{r, s}(\zeta)=\{1-\zeta x^r y^s, x\} \in K_2(\Q_{\zeta} (X_N)) \otimes \Q. $$   
In general, the tame symbols of $e_N^{r, s}(\zeta)$ do not vanish, \emph{i.e.}~$e_N^{r, s}(\zeta)$ does not extend to an element of $K_2(X_{N, \Q_{\zeta}})\otimes \Q$. 
Theorem \ref{prop:res1} is a $K_4$ analogue of the fact that $e_N^{1, 1}(1)$, $e_N^{-2, 1}(\e)$ and $e_N^{1, -2}(\e)$ extend to elements of $K_2(X_{N, \Q(\e)})\otimes \Q$ (see \cite{Ross91}). 
\end{itemize}
\end{rmk}

\section{Regulator integrals and non-vanishing results}\label{sec:asy}

Let $X$ be a smooth projective geometrically connected curve defined over $\Q$, and let $U=X \setminus S$, where $S$ is a finite set of closed points of $X$. 
We will write $U(\C)=U \times_{\Q} \C$ for the complex points of $U$. Define $H^2(\Gamma(U,3))$ to be the intersection of the kernels of the residue maps $\mathrm{Res}_p$ \eqref{map:res_cohom} for all $p\in U$. In other words, $H^2(\Gamma(U,3))$ fits into an exact sequence
\[
0\longrightarrow H^2(\Gamma(U,3)) \longrightarrow H^2(\Gamma(\Q(X),3)) \overset{\sum \mathrm{Res}_p}{\longrightarrow} \bigoplus_{p\in U} \ker(\delta_{\Q(p)}).
\]
Then we have the Goncharov regulator map \cite[Theorem 2.2]{Goncharov2}
$$r_3(2) \colon H^2(\Gamma(U, 3)) \longrightarrow H^1(U(\C), \R(2))^{+}, $$
where $+$ denotes the invariants with respect to complex conjugation acting on the complex points $U(\C)$ and on the coefficients. 
Explicitly, we have 
\begin{align} \label{regulator} 
r_3(2) (\{f\}_2 \otimes g):= -D_2(f) d\arg(g) -\frac13 \left(-\log|1-f| d\log|f| + \log |f| d\log|1-f| \right) \cdot \log|g|, 
\end{align}
where $D_2 \colon \mathbb{P}^1(\C) \to \R$ is the Bloch--Wigner dilogarithm \eqref{BWlog}. On the other hand, Beilinson \cite{Bei80, Nek94} defined a regulator map 
\begin{equation}\label{BeiReg}
    r_B \colon K_4^{(3)}(U) \longrightarrow H^1(U(\C), \R(2))^{+}.
\end{equation}
The regulator maps of Beilinson and Goncharov are known to be compatible (up to a factor $2$ and a sign) via the map \eqref{map:deJeu} as a consequence of de Jeu's computations of regulator integrals in \cite{Jeu96,Jeu00}. We will only need the following statement.

\begin{thm}[de Jeu] \label{thm:reg_comp}
    In the case $U=X$, we have the equality $\frac 12 r_B \circ \varphi_{\mathrm{dJ}} = r_3(2)$ up to sign.
\end{thm}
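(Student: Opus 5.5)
This statement is attributed to de Jeu, and in this paper it will be deduced from his explicit regulator computations rather than proved from scratch. The argument is a reduction from the curve $X$ to its function field. It goes in four steps.

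\textbf{Step 1: reduce to a cohomological identity on an open curve.} Let $\alpha\in H^2(\Gamma(X,3))$, and represent it by a cocycle $\xi=\sum_i n_i\{f_i\}_2\otimes g_i$. Choose a finite set $S$ of closed points such that all $f_i$, $1-f_i$ and $g_i$ are units on $U=X\setminus S$. By Theorem \ref{thm:K4Y}, $\varphi_{\mathrm{dJ}}(\xi)$ lies in $K_4^{(3)}(U)$. Both regulators are compatible with restriction from $X$ to $U$, and the restriction map
\[
H^1(X(\C),\R(2))^+\to H^1(U(\C),\R(2))^+
\]
is injective: its kernel comes from $H^0$ of the boundary points, and the Gysin map into $H^2(X(\C))$ is injective on that piece up to the global constraint, so a class on $X$ is determined by its restriction to $U$. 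It therefore suffices to show $\tfrac12 r_B(\varphi_{\mathrm{dJ}}(\xi))=\pm r_3(2)(\xi)$ in $H^1(U(\C),\R(2))^+$.

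\textbf{Step 2: test against homology.} A class in $H^1(U(\C),\R(2))$ is determined by its integrals over closed loops $\gamma$ in $U(\C)$. So I compare $\int_\gamma$ of both sides for each such loop.

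\textbf{Step 3: compute the Beilinson side.} I use de Jeu's computation in \cite{Jeu96,Jeu00}. He realises $\varphi_{\mathrm{dJ}}$ through multi-relative $K$-theory, in which the element $\{f\}_2\otimes g$ comes from a cup-product-type construction of a weight $2$ element attached to $f$ with the unit $g$. He then computes the Beilinson regulator via Deligne cohomology, by building explicit representatives with Goncharov's polylogarithmic forms. The output is exactly the form \eqref{regulator}, with the factor $2$ coming from normalisations of the Deligne cup product, and a sign fixed by the universal sign choice in $\varphi_{\mathrm{dJ}}$.

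\textbf{Step 4: check well-definedness on both sides.} The form \eqref{regulator} is closed on a cocycle because $dr_3(2)$ is the pullback of the weight $3$ volume-type form along $\partial$. The coboundaries $\{f\}_2\otimes f$ go to exact forms; this uses $d\mathcal{L}_3(f)$. The relations of $B_2$ are respected by the functional equation \eqref{FE:D2} of $D_2$. Together these show that both maps factor through $H^2$, so comparing them on representatives is legitimate.

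The main obstacle is Step 3: identifying the Deligne-cohomology representative of $r_B(\varphi_{\mathrm{dJ}}(\xi))$ with Goncharov's form, including the exact constant $\tfrac12$. That is precisely de Jeu's work, and I will cite \cite[Section 5]{Jeu00} and \cite{Jeu96} for it rather than reproduce it.
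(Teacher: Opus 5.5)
Your proposal is in substance the paper's proof: all the content is in Step 3, the appeal to de Jeu's regulator computation, which the paper cites as \cite[Theorem 5.4]{Jeu00}. Steps 1, 2 and 4 are scaffolding that this citation makes unnecessary. In Step 1 the injectivity of $H^1(X(\C),\R(2))\to H^1(U(\C),\R(2))$ is true but mis-justified: its kernel is the image of $H^1_S(X(\C))=0$, whereas the $H^0(S)$ term and the Gysin map to $H^2(X(\C))$ describe the cokernel.

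You should add the one thing the paper's proof actually checks: the base field $\Q$ is totally real. This is the hypothesis under which de Jeu's theorem applies. It is also what guarantees, via Theorem~\ref{rmk:K4X}, that $\varphi_{\mathrm{dJ}}$ sends $H^2(\Gamma(X,3))$ into $K_4^{(3)}(X)$, which your Step 1 tacitly uses when restricting from $X$ to $U$.
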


\begin{proof}
    This is a consequence of \cite[Theorem 5.4]{Jeu00} since the base field $\Q$ is totally real (see \cite[Section 6.1]{Bru20} for more details). 
\end{proof}

Returning to the case of the Fermat curve $X_N$, we define the homology cycle
\begin{equation}\label{def:gammaN}
\gamma_N := \frac{1}{N^2} \sum_{(r,s) \in G_N} (1-g_N^{r,0})(1-g_N^{0,s}) \delta_N
\end{equation}
where $\delta_N : [0,1] \to X_N$ is the path defined by $$\delta_N(t) := (x(t), y(t)) = (t^{1/N}, (1-t)^{1/N}).$$ 
Then $H_1(X_N(\C), \Q)$ is a cyclic $\Q[G_N]$-module generated by $\gamma_N$ \cite[Proposition 4.9]{Ots11}. We are going to prove non-triviality of $\Xi_N^{1,1}(\varepsilon)\in K_4^{(3)}(X_{N, K})$ by showing that its image under the regulator map $r_B$ is non-trivial. 
Since $r_B$ and $r_3(2)$ are compatible in this case by Theorem \ref{thm:reg_comp}, it suffices to prove the non-triviality of $r_3(2)(\xi^{1,1}_N(\e))$. 
We first give a manageable expression for the regulator integral of $\xi_N^{1,1}(\varepsilon)$ over $\gamma_N$.

\begin{lem} \label{lem:reg_xi1}
Let $\varepsilon$ be a $2N$-th root of unity with $\varepsilon^N = -1$. Then
\begin{equation*}
    \int_{\gamma_N} r_3(2)(\xi_N^{1,1}(\varepsilon)) = \int_{\delta_N} r_3(2)(\xi)
\end{equation*}
with
\begin{equation*}
    \xi = \xi_N^{1,1}(\varepsilon) - \frac{1}{N} \sum_{r \in \Z/N\Z} \xi_N^{1,1}(\varepsilon \zeta_N^r).
\end{equation*}
\end{lem}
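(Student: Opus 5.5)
Expanding the definition \eqref{def:gammaN} of $\gamma_N$, we get
\[
\gamma_N=\frac{1}{N^2}\sum_{(r,s)\in G_N}\bigl(\delta_N-g_N^{r,0}\delta_N-g_N^{0,s}\delta_N+g_N^{r,s}\delta_N\bigr).
\]
Integrating a $1$-form over $g\delta_N$ is the same as integrating its pullback $g^*$ over $\delta_N$. The regulator $r_3(2)$ of \eqref{regulator} is given by a universal formula in the functions $f,g$, so it commutes with pullback by automorphisms. We therefore get
\[
\int_{\gamma_N} r_3(2)(\xi_N^{1,1}(\e))=\frac{1}{N^2}\sum_{(r,s)}\int_{\delta_N} r_3(2)\bigl((1-g_N^{r,0})(1-g_N^{0,s})\xi_N^{1,1}(\e)\bigr),
\]
with the group acting by pullback. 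By Proposition \ref{prop:trace}(i), $g_N^{a,b}\xi_N^{1,1}(\e)=\xi_N^{1,1}(\e\z_N^{a+b})$. Here this is an honest equality in $B_2\otimes F^\times_\Q$, not only modulo coboundaries: $x^N$ is fixed, and $x^ry^s$ is multiplied by $\z_N^{ar+bs}$. Expanding the product and summing over $(r,s)$, the four terms give
\[
N^2\,\xi_N^{1,1}(\e)\;-\;2N\sum_{r}\xi_N^{1,1}(\e\z_N^{r})\;+\;\sum_{(r,s)}\xi_N^{1,1}(\e\z_N^{r+s}).
\]
The last sum equals $N\sum_t \xi_N^{1,1}(\e\z_N^t)$, since each $t\in\Z/N\Z$ occurs $N$ times as $r+s$. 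After dividing by $N^2$, the total is $\xi_N^{1,1}(\e)-\frac1N\sum_r\xi_N^{1,1}(\e\z_N^r)=\xi$, which is the claimed identity.

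The main point to check is that these manipulations are legitimate at the level of forms on $\delta_N$. The individual $\xi_N^{1,1}(\e\z_N^r)$ need not lie in $K_4^{(3)}(X_N)$, so their regulators $r_3(2)$ are only $1$-forms on the open curve $U_N^{1,1}$. We need $\delta_N$, and its translates, to avoid the singularities of these forms, or at least the integrals to converge. On $\delta_N$ the values $x,y$ are real and lie in $[0,1]$. The function $1-\e\z_N^r xy$ has zeros where $(xy)^N=\e^{-N}\z_N^{-rN}=-1$, i.e.\ $(x(1-x))$-type values equal to $-1$; these cannot occur for real $t\in[0,1]$, since there $t(1-t)\ge 0$. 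The only remaining bad points are the endpoints $t=0,1$, which are cusps. Near the cusps the form \eqref{regulator} has at most logarithmic singularities, and these are integrable. This step also uses that $r_3(2)$ is linear in the cocycle and that $\xi$ has vanishing residues. The latter follows from the distribution relation (Proposition \ref{dist:cocycle} with $d=N$): $\sum_r\xi_N^{1,1}(\e\z_N^r)=\pi_{N,1}^*\xi_1^{1,1}(-1)$. This makes $\xi$ a genuine closed form representing the same class up to a term pulled back from $X_1\cong\mathbb P^1$. That pulled-back term integrates to zero over $\gamma_N$, because $\gamma_N$ is killed by $\pi_{N,1*}$: by construction, the factors $(1-g)$ kill invariants.

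I expect the main obstacle to be exactly this convergence and closedness bookkeeping, not the algebra. The algebraic identity is routine once Proposition \ref{prop:trace}(i) is in hand. In particular, the path $\gamma_N$ is a sum of open paths, not individually closed, and one has to justify splitting it into the pieces $g\delta_N$, each integrated against a form that is smooth only on the open part. I would handle this by noting that all pieces join at the cusps $t=0,1$, where the integrands are integrable.
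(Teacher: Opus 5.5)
Your argument is correct and is the same as the paper's: expand $\gamma_N$, move the group action onto the cocycle by pullback, apply Proposition \ref{prop:trace}(i), and count how often each twist $\e\z_N^t$ occurs. Your last paragraph is not needed, since the identity is just linearity of integration and $\int_{g\delta_N}\omega=\int_{\delta_N}g^*\omega$ for convergent integrals, with no role for residues or closedness; in any case each $\xi_N^{1,1}(\e\z_N^r)=(g_N^{r,0})^*\xi_N^{1,1}(\e)$ already has vanishing residues, being a translate of a cocycle that does (Proposition \ref{cor:pb}(iii) and the proof of Theorem \ref{prop:res1}).
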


\begin{proof}
    By definition of $\gamma_N$ \eqref{def:gammaN}, we have
    \begin{equation*}
        \int_{\gamma_N} r_3(2)(\xi_N^{1,1}(\varepsilon)) = \int_{\delta_N} r_3(2)(\xi)
    \end{equation*}
    with
    \begin{align*}
        \xi = \frac{1}{N^2} \sum_{r,s \in \Z/N\Z} ((1-g_N^{r,0})(1-g_N^{0,s}))^* \xi_N^{1,1}(\varepsilon).
    \end{align*}
    Using Proposition \ref{prop:trace}(1), we compute
    \begin{align*}
        \xi & = \xi_N^{1,1}(\varepsilon) + \frac{1}{N^2} \sum_{r,s \in \Z/N\Z} (-\xi_N^{1,1}(\varepsilon \zeta_N^r) - \xi_N^{1,1}(\varepsilon \zeta_N^s) + \xi_N^{1,1}(\varepsilon \zeta_N^{r+s})) \\
        & = \xi_N^{1,1}(\varepsilon) - \frac{1}{N} \sum_{r \in \Z/N\Z} \xi_N^{1,1}(\varepsilon \zeta_N^r). \qedhere
    \end{align*}
\end{proof}

By Lemma \ref{lem:reg_xi1}, it is enough to express the integral of the regulator of $\xi_N^{1,1}(\zeta)$ along $\delta_N$, where $\zeta$ is a root of unity. This is done in the following lemma.

\begin{lem} \label{lem:reg_xi2}
    Let $\zeta$ be a root of unity. Then
    \begin{align*}
        \int_{\delta_N} r_3(2)(\xi_N^{1,1}(\zeta)) = \int_0^1 \rho_{N,\z}^1 + \rho_{N,\z}^2,
    \end{align*}
    where
    \begin{align*}
        \rho_{N,\z}^1 & = - \frac13 \log \bigl|1-\zeta (t(1-t))^{1/N}\bigr| \Bigl(-\frac{\log(1-t)}{t} - \frac{\log t}{1-t}\Bigr) dt, \\
        \rho_{N,\z}^2 & = \frac13 \log t \Bigl(-\log \bigl|1-\zeta (t(1-t))^{1/N}\bigr| \dlog(t(1-t)) + \log(t(1-t)) \dlog \bigl|1-\zeta (t(1-t))^{1/N}\bigr| \Bigr).
    \end{align*}
\end{lem}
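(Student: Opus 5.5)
The plan is a direct computation: pull back the explicit formula \eqref{regulator} for $r_3(2)$ along the path $\delta_N$, term by term. Along $\delta_N(t)=(t^{1/N},(1-t)^{1/N})$ with $t\in\,]0,1[$, the relevant functions become
\[
x^N=t \quad\text{and}\quad xy=w(t):=(t(1-t))^{1/N}.
\]
Both are real and lie in $]0,1[$. By definition,
\[
\xi_N^{1,1}(\z)=\{x^N\}_2\otimes(1-\z xy)-N\{\z xy\}_2\otimes x^N.
\]
So along the path we apply \eqref{regulator} to $\{t\}_2\otimes(1-\z w)$ and to $-N\{\z w\}_2\otimes t$.

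\emph{First term.} The dilogarithm contribution is $-D_2(t)\,\darg(1-\z w)$. It vanishes because $D_2$ is zero on the real line: for $0<t<1$, both $\mathrm{Li}_2(t)$ and $\log(1-t)\log t$ are real. The remaining contribution is
\[
-\tfrac13\bigl(-\log(1-t)\,\dlog t+\log t\,\dlog(1-t)\bigr)\log|1-\z w|.
\]
Substituting $\dlog t=dt/t$ and $\dlog(1-t)=-dt/(1-t)$ gives exactly $\rho^1_{N,\z}$.

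\emph{Second term.} The dilogarithm contribution is $N D_2(\z w)\,\darg(t)$. It vanishes because $t>0$, so $\darg t=0$ along the path. The remaining contribution is
\[
\tfrac N3\bigl(-\log|1-\z w|\,\dlog|\z w|+\log|\z w|\,\dlog|1-\z w|\bigr)\log t.
\]
Since $|\z|=1$, we have $\log|\z w|=\tfrac1N\log(t(1-t))$ and $\dlog|\z w|=\tfrac1N\dlog(t(1-t))$. The factor $N$ therefore cancels, and the expression is exactly $\rho^2_{N,\z}$. Adding the two pieces gives the integrand $\rho^1_{N,\z}+\rho^2_{N,\z}$ on $]0,1[$.

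The only point needing care is that the integral over $\delta_N$, whose endpoints $t=0,1$ are cusps, is an honest convergent integral.
\begin{itemize}
\item Near $t=0$: $\log|1-\z w|=O(t^{1/N})$, while the other factors have at worst singularities like $\log t/t$ or $\log^2 t$. The products are therefore $O(t^{1/N-1}\log^2 t)$, which is integrable.
\item Near $t=1$: the same argument applies by the symmetry $t\leftrightarrow 1-t$.
\item Interior: since $\z\neq$ the relevant values avoid $1$ except possibly where $\z w(t)=1$, this can only happen if $\z=1$ and $w(t)=1$, which is impossible because $w\le 4^{-1/N}<1$. So $\log|1-\z w|$ is smooth on $]0,1[$.
\end{itemize}
I expect this convergence check to be the only mildly delicate step. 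Once it is done, the statement follows from the term-by-term identification above.
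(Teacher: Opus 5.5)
Your proposal is correct and follows the paper's proof. You pull back the explicit formula for $r_3(2)$ along $\delta_N$, discard the dilogarithm terms because $D_2(x^N)=0$ and $\darg(x^N)=0$ on the path, and use $|\z|=1$ so that the factor $N$ cancels in the second term. Your endpoint integrability check is an addition the paper leaves implicit; near $t=1$ the integrand $\rho^2_{N,\z}$ is not literally symmetric under $t\leftrightarrow 1-t$, but the factor $\log t$ there only improves the estimate, so the conclusion stands.
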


\begin{proof}
    Recall from \eqref{regulator} that
    \begin{equation*}
        r_3(2)(\{f\}_2 \otimes g) = -D_2(f) \darg(g) - \frac13 \log |g| \bigl(-\log |1-f| \dlog |f| + \log |f| \dlog |1-f| \bigr),
    \end{equation*}
    where $D_2$ is the Bloch--Wigner dilogarithm \eqref{BWlog}.
    The function $D_2$ vanishes on $\R$, so that the restriction of $D_2(x^N)$ to $\delta_N$ is zero. Similarly, the restriction of $\darg(x^N)$ to $\delta_N$ is also zero. It follows that in the integral to be computed, the terms involving the dilogarithm disappear. We get
    \begin{equation}\label{eq1}
        \int_{\delta_N} r_3(2)(\xi_N^{1,1}(\zeta)) = \int_{\delta_N} \tilde\rho_{N,\z}^1 + \tilde\rho_{N,\z}^2
    \end{equation}
    with
    \begin{equation}\label{eq2}
    \begin{split}
        \tilde\rho_{N,\z}^1 & = - \frac13 \log \bigl|1-\zeta xy\bigr| \bigl(-\log(1-x^N) \dlog(x^N) + \log(x^N) \dlog(1-x^N)\bigr), \\
        \tilde\rho_{N,\z}^2 & = \frac{N}{3} \log(x^N) \Bigl(-\log \bigl|1-\zeta xy\bigr| \dlog(xy) + \log(xy) \dlog \bigl|1-\zeta xy\bigr| \Bigr).
    \end{split}
    \end{equation}
    The result follows by setting $\rho_{N,\z}^j=\delta_N^* \tilde\rho_{N,\z}^j$ for $j=1,2$, \emph{i.e.}~by setting $x = t^{1/N}$ and $y = (1-t)^{1/N}$, where $t$ goes from $0$ to $1$.
\end{proof}

\begin{thm}\label{regintN2}
    Let $\e$ be a $2N$-th root of unity with $\e^N=-1$. Then 
    \begin{equation*}
    \int_{\gamma_N} r_3(2)(\xi_{N}^{1,1}(\e))=\frac23  \int_0^1 \left(f_{N, \e}(t) - \frac{f_{1,-1}(t)}{N}\right) dt
      + \left(\frac1N \mathcal{L}_3(-1/4)-N^2 \mathcal{L}_3(\e (1/4)^{1/N})\right),
\end{equation*}
where $\mathcal{L}_3$ is Zagier's trilogarithm \eqref{GonchL3} and, for $x\in [0,1[$ and $z\in S^1$, the function $f_{N, z}$ is given by
\begin{equation*}
    f_{N,z}(x):=\Re(\log(1- z (x(1-x))^{1/N})) \frac{\log(1-x)}{x}.
\end{equation*}
\end{thm}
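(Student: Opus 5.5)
We combine Lemma \ref{lem:reg_xi1} and Lemma \ref{lem:reg_xi2}. By Lemma \ref{lem:reg_xi1}, the integral equals $\int_0^1 (\rho^1_{N,\e}+\rho^2_{N,\e}) - \frac1N\sum_{r}\int_0^1(\rho^1_{N,\e\z_N^r}+\rho^2_{N,\e\z_N^r})$. The averaging over $r$ is handled by the product identity $\prod_{r\in\Z/N\Z}(1-\e\z_N^r w)=1-\e^N w^N = 1+w^N$. Taking $w=(t(1-t))^{1/N}$ and logarithms of absolute values, we get
\[
\sum_r \log\bigl|1-\e\z_N^r (t(1-t))^{1/N}\bigr| = \log(1+t(1-t)) = \log\bigl|1-(-1)\,t(1-t)\bigr|.
\]
Hence the averaged terms are exactly $\frac1N$ times the $N=1$, $\z=-1$ versions of the $\rho$'s, up to the rescaling of the $\frac N3$ factor which must be tracked. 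This explains the pairing $f_{N,\e}$ versus $f_{1,-1}/N$, and $N^2\mathcal L_3(\e(1/4)^{1/N})$ versus $\frac1N\mathcal L_3(-1/4)$. It therefore suffices to prove, for a root of unity $\z$ with $|\z|=1$ and $F_\z(t):=\log|1-\z(t(1-t))^{1/N}|$, the single identity
\[
\int_0^1(\rho^1_{N,\z}+\rho^2_{N,\z}) = \tfrac23\int_0^1 f_{N,\z}(t)\,dt - N^2\mathcal L_3\bigl(\z(1/4)^{1/N}\bigr) + C,
\]
where $C$ is a term cancelling in the difference (e.g.\ something linear in $\sum_r$).

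For $\rho^1$, the symmetry $t\mapsto 1-t$ preserves $F_\z$ and swaps $\frac{\log(1-t)}{t}$ with $\frac{\log t}{1-t}$. Thus $\int\rho^1 = \frac23\int_0^1 F_\z(t)\frac{\log(1-t)}{t}\,dt = \frac23\int f_{N,\z}$, which accounts for the first term.

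For $\rho^2$, we substitute $u=t(1-t)$, which rises from $0$ to $1/4$ and then falls back. By symmetry we combine $t$ and $1-t$: the weights $\log t$ and $\log(1-t)$ add up to $\log u$, taken over both halves with opposite orientation. I expect that one must instead write $\log t=\frac12\log u+\frac12\log\frac{t}{1-t}$. The symmetric part gives, after $u=w^N$ with $w\in[0,(1/4)^{1/N}]$, a closed form
\[
\tfrac{N^3}{3}\int \log w\bigl(-\log|1-\z w|\,d\log w^N+\log w^N\,d\log|1-\z w|\bigr),
\]
which is an exact derivative of the single-valued trilogarithm expression. We use $d\mathcal L_3$ along the ray $\z\R_{>0}$ (where $\log|z|=\log w$ and $d\arg$ vanishes), namely the real part of $\log^2|z|\,d\log|1-z|/3$ plus terms, to identify it with $-N^2\mathcal L_3(\z(1/4)^{1/N})$ plus boundary terms vanishing at $w=0$. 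The antisymmetric part $\log\frac{t}{1-t}$ changes sign under $t\mapsto1-t$ while the form in $u$ is symmetric, so it should cancel or be absorbed back into an $f$-type integral.

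The main obstacle is this $\rho^2$ computation. It requires checking precisely that $\frac13\bigl(\log^2|z|\,d\log|1-z| - \log|z|\log|1-z|\,d\log|z|\bigr)$ restricted to the ray equals $d\mathcal L_3$ modulo terms that integrate to the $f$-integrals. It also requires tracking the factor $N^2$ (from $\log u=N\log w$ twice) and the $\frac1N$ factor in the averaged $N=1$ term. I would first verify this by differentiating $\mathcal L_3(\z w)$ in $w$ via \eqref{polylog:deriv}, and check the result numerically for small $N$.
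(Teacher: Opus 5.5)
Your route is the paper's route: Lemmas \ref{lem:reg_xi1} and \ref{lem:reg_xi2}, the product identity for the average, the $t\mapsto 1-t$ symmetry for $\rho^1$, then $u=t(1-t)$, $u=w^N$ and the antiderivative $G_z$ for $\rho^2$. Everything before the $\rho^2$ step is fine.

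\textbf{The gap is in the $\rho^2$ step, and it is not bookkeeping.} Write $\rho^2_{N,\z}=\frac13\log t\;\omega_{N,\z}$ with $\omega_{N,\z}:=-F_{N,\z}\,\dlog u+\log u\,dF_{N,\z}$ and $F_{N,\z}=\log\bigl|1-\z u^{1/N}\bigr|$. This form is pulled back from the $u$-line, and along $[0,1]$ the variable $u$ goes $0\to\frac14\to 0$. Hence $\int_0^1\log u\;\omega_{N,\z}=0$: your symmetric part vanishes identically and cannot produce the trilogarithm. Everything comes from the antisymmetric part. Splitting $\log\frac{t}{1-t}=\log u-2\log(1-t)$ gives
\[
\int_0^1\rho^2_{N,\z}=\frac13\int_0^{1/2}\log\frac{t}{1-t}\;\omega_{N,\z}=-N^2\mathcal{L}_3\bigl(\z(1/4)^{1/N}\bigr)-\frac23\int_0^{1/2}\log(1-t)\;\omega_{N,\z}.
\]
The last integral is not an $f$-type integral, and it does not cancel in the average. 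Since $\sum_r\omega_{N,\e\z_N^r}=\omega_{1,-1}$, it survives as
\[
-\frac23\int_0^{1/2}\log(1-t)\bigl(\omega_{N,\e}-\tfrac1N\omega_{1,-1}\bigr).
\]
By my numerical estimate this is about $-0.116$ for $N=3$, $\e=-1$. It cannot vanish by sign alone: for $\z=-1$ the integrand $\log(1-t)\,\omega_{N,-1}$ is positive on $]0,1/2[$.

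\textbf{Consequence.} The identity you are aiming at cannot be obtained this way, and as far as I can check it is false as stated. For $N=3$, $\e=-1$, my estimates give:
\begin{itemize}
\item the right-hand side of the statement is about $7.265$;
\item the left-hand side equals $R_3=7.149\ldots$, because the remark after Corollary \ref{cor:indep} gives $r_3(2)(\xi_3^{1,1}(-1))=r_3(2)(\xi_3^{-2,1})$ (equivalently, $\tau^*$ acts by $-1$ on $H^1(X_3)$);
\item adding the missing term gives $\approx 7.149$.
\end{itemize}

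The paper's proof makes exactly this slip. It writes $\int_0^1\rho^2=\int_0^{1/2}(\rho^2+\rho^3)$, but $t\mapsto 1-t$ reverses orientation, so the correct folding is $\int_0^{1/2}(\rho^2-\rho^3)$. Your suspicion about ``opposite orientation'' was right; you just credited the surviving contribution to the wrong part of the decomposition.

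Since the extra term is $O(1)$ in $N$, the asymptotic in Theorem \ref{thm:asy} is unaffected. However, the explicit lower bound for odd $N$ in Theorem \ref{thm:nontrivial} has to account for it.
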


\begin{proof}
    We simplify the notation by writing $\z=\z_N$, $\xi_{N, \e}=\xi_N^{1,1}(\e)$, and $\xi_{N, \e, r}=\xi_N^{1,1}(\e \zeta_N^r)$ in this proof. By Lemma \ref{lem:reg_xi1}, we have 
\[
\int_{\gamma_N} r_3(2)(\xi_{N,\e})=\int_{\delta_N} r_3(2)(\xi_{N, \e}) - \frac{1}{N} \sum_{r \in \Z/N\Z} \int_{\delta_N} r_3(2)(\xi_{N, \e, r}).
\]
By Lemma \ref{lem:reg_xi2}, we have 
\[
\int_{\delta_N} r_3(2)(\xi_{N,\e, r})=\int_0^1 \rho_{N,\e, r}^1 + \int_0^{1} \rho_{N,\e, r}^2=\int_0^1 \rho_{N,\e, r}^1 + \int_0^{1/2} (\rho_{N,\e, r}^2 + \rho_{N,\e, r}^3),
\]
where 
\begin{align*}
    \rho_{N,\e, r}^1 & = -\frac13 \log |1- \e\zeta^r((t(1-t))^{1/N})| \Bigl( -\frac{\log (1-t)}{t} - \frac{\log t}{1-t} \Bigr) dt \\
    \rho_{N,\e, r}^2 & = \frac{1}{3} \log t \bigl( -\log |1 -\e \zeta^r((t(1-t))^{1/N}| \dlog (t(1-t)) + \log (t(1-t)) \dlog |1 -\e \zeta^r((t(1-t))^{1/N}|\bigr) \\
    \rho_{N,\e, r}^3 & = \frac{1}{3} \log (1-t) \bigl( -\log |1 -\e \zeta^r((t(1-t))^{1/N}| \dlog (t(1-t)) + \log (t(1-t)) \dlog |1 -\e \zeta^r((t(1-t))^{1/N}|\bigr).
\end{align*}
We first compute that
\begin{align*}
    \int_0^1 \rho_{N, \e}^1 & = \frac{1}{3}\int_0^1 \log|1-\e(t(1-t))^{1/N}| \left( \frac{\log(1-t)}{t} + \frac{\log(t)}{1-t} \right)dt \\ 
    & = \frac{1}{3}  \int_0^1 \Re(\log(1-\e(t(1-t))^{1/N})) \left( \frac{\log(1-t)}{t} + \frac{\log(t)}{1-t} \right)dt \\ 
    & =\frac{2}{3} \int_0^{1} \Re(\log(1 -\e(t(1-t))^{1/N})) \frac{\log(1-t)}{t} dt  \\
    & = \frac{2}{3}\int_0^{1} f_{N,\e}(t) dt.
\end{align*}
Next, we note that 
\[
\rho_{N, \e}^2+\rho_{N, \e}^3=\frac{1}{3} \log (t(1-t)) \bigl( -\log |1 -\e ((t(1-t))^{1/N}| \dlog (t(1-t)) + \log (t(1-t)) \dlog |1 -\e ((t(1-t))^{1/N}|\bigr).
\]
Putting $v=(t(1-t))^{1/N}$, we see that 
\begin{align*}
\rho_{N, \e}^2+\rho_{N, \e}^3 &=\frac{1}{3} \log (v^N) \bigl( -\log |1 -\e v| \dlog (v^N) + \log (v^N) \dlog |1 -\e v|\bigr) \\
&=\frac{1}{3} \Re\left(\log (v^N) \bigl( -\log (1 -\e v)\dlog (v^N) + \log (v^N) \dlog (1 -\e v)\bigr) \right)
\end{align*}
When $t$ ranges over $]0, 1/2[$, $v$ ranges over $]0,(1/4)^{1/N}[$, and 
we deduce that 
\begin{align*}
\int_0^{1/2} (\rho_{N, \e}^2+\rho_{N, \e}^3) & = \frac{N}{3} \Re \left(\int_{0}^{(1/4)^{1/N}} \log (v) \Bigl( -N\log (1 -\e v) \frac{dv}{v} + N\log (v) \dlog (1 - \e v)\Bigr) \right) \\
& = \frac{N^2}{3} \Re \left(\int_{0}^{(1/4)^{1/N}} \log (v) \Bigl( - \frac{\log (1 -\e v)}{v} - \frac{\e \log (v)}{1 -\e v} \Bigr) dv \right) \\
& = \frac{N^2}{3} \Re \left(\int_{0}^{(1/4)^{1/N}} g_{\e}(v) dv \right),
\end{align*}
where, for $x\in ]0,1]$ and $z\in S^1$, the function $g_{z}$ is defined by
\begin{equation}
    g_{z}(x):=- \log (x) \Bigl(  \frac{z \log (x)}{1-z x} +\frac{\log (1-z x)}{x} \Bigr).
\end{equation}
Define, for $x\in [0, 1]$ and $z\in S^1$, the function 
\begin{equation}\label{fct:Gzx}
    G_{z}(x):=-3\mathrm{Li}_3(z x)+3\mathrm{Li}_2(z x)\log(x)+\log(1-z x)(\log(x))^2,
\end{equation}
where the trilogarithm $\mathrm{Li}_3$ is defined as in \eqref{def:Lik}. One readily checks using \eqref{polylog:deriv} that $G_{z}'(x)=g_z (x)$. Moreover, since $\vert z\vert = 1$, we have
\begin{equation}\label{GtoL3}
\Re(G_z(x))=-3\mathcal{L}_3(z x),
\end{equation}
where $\mathcal{L}_3$ is Zagier's trilogarithm \eqref{GonchL3}.
It follows that 
\begin{equation*}
\int_0^{1/2} (\rho_{N, \e}^2+\rho_{N, \e}^3) = \frac{N^2}{3} \Re \left(  G_{\e}((1/4)^{1/N}) \right)=-N^2 \mathcal{L}_3(\e (1/4)^{1/N}).
\end{equation*}
Finally, we note that 
\begin{equation*}
\sum_{r\in \Z/N\Z} \log \vert 1 -\e \zeta^r (t(1-t))^{1/N} \vert = \log \left\vert \prod_{r\in \Z/N\Z} (1 -\e \zeta^r (t(1-t))^{1/N})  \right\vert  = \log \left\vert 1+t(1-t)  \right\vert.
\end{equation*}
Thus, 
\begin{align*}
\sum_{r\in \Z/N\Z} \int_0^1 \rho_{N,\e, r}^1 &= -\frac13 \int_0^1 \log (1+t(1-t)) \Bigl( -\frac{\log (1-t)}{t} - \frac{\log t}{1-t} \Bigr) dt \\ &= \frac{2}{3} \int_0^1 \log(1+t(1-t))\frac{\log(t)}{1-t} dt \\
&=\frac{2}{3} \int_0^1 f_{1,-1}(t)dt.
\end{align*}
Furthermore, we have 
\[
\sum_{r\in \Z/N\Z} (\rho_{N,\e, r}^2+\rho_{N,\e, r}^3)=\frac{1}{3} \log (t(1-t)) \bigl( -\log (1+ t(1-t)) \dlog (t(1-t)) + \log (t(1-t)) \dlog (1+ t(1-t)).
\]
Putting $u=t(1-t)$, we see that 
\[
\sum_{r\in \Z/N\Z} (\rho_{N,\e, r}^2+\rho_{N,\e, r}^3)=\frac{1}{3} \log (u) \bigl( -\log (1+ u) \dlog (u) + \log (u) \dlog (1+ u)\bigr)=\frac 13 g_{-1}(u).
\]
As $t$ ranges in $]0, 1/2[$, $u$ ranges in $]0, 1/4[$, and we obtain 
\[
\sum_{r\in \Z/N\Z} \int_0^{1/2} (\rho_{N,\e, r}^2+\rho_{N,\e, r}^3)=\frac{1}{3} \int_{0}^{1/4} g_{-1}(u) du=\frac{1}{3} G_{-1}(1/4)=-\mathcal{L}_3(-1/4).
\]
The result follows by putting everything together.
\end{proof}

\begin{thm}\label{thm:asy}
    Let $\e=-1$ if $N$ is odd and $\e=-e^{\frac{\pi i}{N}}$ if $N$ is even. We then have the asymptotic formula
    \[
    \int_{\gamma_N} r_3(2)(\xi_N^{1,1}(\e)) \sim \frac{3}{4} \zeta(3) N^2, \qquad \text{ as } N\to +\infty.
    \]
\end{thm}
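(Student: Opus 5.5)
Starting from Theorem \ref{regintN2}, we examine each term of the exact formula as $N\to+\infty$. The terms $\frac1N\mathcal{L}_3(-1/4)$ and $-\frac{2}{3N}\int_0^1 f_{1,-1}(t)\,dt$ do not depend on $N$ apart from the factor $1/N$, so they are $O(1/N)$. It remains to treat the two terms that carry the dependence on $N$:
\[
A_N:=\frac23\int_0^1 f_{N,\e}(t)\,dt, \qquad B_N:=-N^2\mathcal{L}_3\bigl(\e(1/4)^{1/N}\bigr).
\]

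\emph{The term $B_N$.} The choice of $\e$ is designed so that $\e\to -1$ wait—in fact, for $N$ odd $\e=-1$, while for $N$ even $\e=-e^{\pi i/N}\to -1$. In both cases $\e(1/4)^{1/N}\to -1$, and $\mathcal{L}_3$ is continuous, so
\[
\mathcal{L}_3\bigl(\e(1/4)^{1/N}\bigr)\to\mathcal{L}_3(-1)=\Re\,\mathrm{Li}_3(-1)=-\tfrac34\zeta(3).
\]
Hence $B_N=\frac34\zeta(3)N^2+o(N^2)$. I will spell out the continuity at $-1$ using \eqref{GonchL3}. The correction terms involve $\log|z|=-\frac{\log 4}{N}$, and they tend to $0$ because $\mathrm{Li}_2$ and $\mathrm{Li}_1$ are bounded near $-1$. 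If a sharper error is needed, a Taylor expansion gives $O(N)$ for the remainder.

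\emph{The term $A_N$.} The claim is $A_N=o(N^2)$; in fact $A_N=O(\log N)$ or even $O(1)$. Write $w=(t(1-t))^{1/N}\in[0,(1/4)^{1/N}]$. Then $w<1$ and $\e w$ stays at distance at least of order $1/N$ from $1$, since $\e$ is near $-1$. This gives
\[
\bigl|\log|1-\e w|\bigr|\le \log 2+O(\log N).
\]
Actually $\e w$ is near $-1$, so $|1-\e w|\ge 1$ roughly and the bound is $O(1)$ uniformly. Since $\frac{\log(1-t)}{t}$ is integrable on $[0,1[$, this yields $|A_N|=O(1)$.

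Combining the two estimates gives the asymptotic $\frac34\zeta(3)N^2$.

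The main obstacle is a uniform lower bound on $|1-\e w|$, which is needed to control $A_N$. For $N$ even, $\e=-e^{\pi i/N}$ has argument $\pi+\pi/N$, so $\Re(\e w)\le 0$ for $N\ge2$. It follows that $1\le|1-\e w|\le 2$, hence $\bigl|\log|1-\e w|\bigr|\le\log 2$ uniformly in $t$ and $N$. With this bound, dominated convergence even identifies the limit of $A_N$ as $\frac23\int_0^1\log(2)\cdot\frac{\log(1-t)}{t}\,dt$ in the region where $w\to1$. Only boundedness is needed, though.
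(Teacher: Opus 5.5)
Your proof is correct and follows the paper's argument. You reduce via Theorem \ref{regintN2}, take the main term $-N^2\mathcal{L}_3(\e(1/4)^{1/N})\sim\frac34\zeta(3)N^2$ from continuity of $\mathcal{L}_3$ at $-1$, and show that $\frac23\int_0^1 f_{N,\e}(t)\,dt=O(1)$.

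The one difference is how you bound that integral. You use the uniform estimate $1\le|1-\e w|\le 2$, which gives $|f_{N,\e}(t)|\le\log 2\,\bigl|\frac{\log(1-t)}{t}\bigr|$. The paper instead argues by monotonicity of $f_{N,\e}$ in $N$ together with its pointwise limit. Your bound is cleaner: the paper's pointwise monotonicity claim only holds within each parity class of $N$ (it fails at $t=1/2$ for $N=3,4$), even though boundedness is all that is needed. Your final bound also makes the earlier tentative remarks about a distance ``of order $1/N$'' and an $O(\log N)$ estimate unnecessary, so you can delete them.
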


\begin{proof}
    By Theorem \ref{regintN2}, it suffices to show that 
    \[
    \frac{2}{3N^2} \int_0^1 f_{N,\e}(t)dt = o(N^2), \qquad \text{ as } N\to +\infty,
    \]
    since $\e (1/4)^{1/N} \to -1$ as $N\to +\infty$ and $-\mathcal{L}_3(-1)=-\mathrm{Li}_3(-1)=(1-2^{-2})\zeta(3)=\frac{3}{4} \zeta(3)$.

    We have
    \[
f_{N,\e}(x)=
\begin{cases}
    \log(1+(x(1-x))^{1/N}) \frac{\log(1-x)}{x}, & N\equiv 1\mod 2; \\
    \frac12 \log (1+2 \cos (\frac{\pi}N) (x(1-x))^{\frac1N} + (x(1-x))^{\frac2N}) \frac{\log(1-x)}x, & N\equiv 0\mod 2.
\end{cases}
\]
    
Note that $f_{N, \e}(x)<0$ for all $x\in ]0,1[$. Moreover,
if $N_1<N_2$, then $f_{N_1, \e}(x)> f_{N_2, \e}(x)$ for all $x\in ]0,1[$. Finally, since $((t(1-t))^{1/N}\to 1$ as $N\to +\infty$, we have 
\[
\lim_{N\to + \infty} f_{N, \e} = f,
\]
where, for $x\in ]0,1[$, the function $f$ is defined by
\[
f(x):=\log(2)\frac{\log(1-x)}{x}.
\]
As a consequence, the sequence $\{\frac23 \int_0^1 f_{N,\e}(t)dt\}_{N\geq 1}$ is monotonously decreasing with limit given by 
\begin{multline*}
    \lim_{N\to +\infty} \frac23  \int_0^1 f_{N,\e}(t)dt = \frac{2}{3}\int_0^{1} f(t) dt=\frac{2\log(2)}{3}\int_0^1 \frac{\log(1-t)}{t} dt \\ = -\frac{2\log(2)}{3} \mathrm{Li}_2(1) =-\frac{2\log(2)}{3}\zeta(2)=-\frac{\log(2)\pi^2}{9}.
\end{multline*}
\end{proof}

\begin{cor}
    For large enough $N$, the element $\Xi^{-2,1}_N\in K_4^{(3)}(X_{N})$ is non-trivial.
\end{cor}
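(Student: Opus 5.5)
The plan is to move the non-vanishing of the regulator integral in Theorem \ref{thm:asy} back to $\Xi^{-2,1}_N$, using the relations between twisted elements from Proposition \ref{cor:pb}.

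First, fix $\e$ as in Theorem \ref{thm:asy}. By Theorem \ref{thm:asy}, $\int_{\gamma_N} r_3(2)(\xi_N^{1,1}(\e)) \sim \frac34\zeta(3)N^2$, so this integral is non-zero for all sufficiently large $N$. In particular, the class $r_3(2)(\xi_N^{1,1}(\e)) \in H^1(X_N(\C),\R(2))^+$ is non-zero for such $N$.

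Second, I transfer this to the $K_4$ element. By Theorem \ref{prop:res1}, $\Xi_N^{1,1}(\e)=\varphi_{\mathrm{dJ}}(\xi_N^{1,1}(\e))$ lies in $K_4^{(3)}(X_{N,K})$ with $K=\Q(\e)$. I will apply Theorem \ref{thm:reg_comp} over the curve $X_{N,K}$ with $U=X$, so that $\frac12 r_B(\Xi_N^{1,1}(\e)) = \pm r_3(2)(\xi_N^{1,1}(\e))$. This uses that the cocycle has trivial residues everywhere; that holds because it differs from $-\tau^*\xi_N^{-2,1}$ only by a coboundary, and residues pass to cohomology via \eqref{map:res_cohom}. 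Hence $r_B(\Xi_N^{1,1}(\e))\neq 0$, and so $\Xi_N^{1,1}(\e)\neq 0$.

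Third, by Proposition \ref{cor:pb}(iii), $\tau^*\xi_N^{-2,1} = -\xi_N^{1,1}(\e)$ modulo coboundaries. By functoriality of de Jeu's map \eqref{map:deJeu}, $\tau^*\Xi_N^{-2,1} = -\Xi_N^{1,1}(\e)$ in $K_4^{(3)}(X_{N,K})$. If $\Xi_N^{-2,1}$ were zero in $K_4^{(3)}(X_N)$, its base change to $K$ would be zero, and so would its pullback under the automorphism $\tau$. That contradicts the previous step, so $\Xi_N^{-2,1}$ is non-trivial for large $N$.

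The main obstacle is the applicability of the regulator compatibility (Theorem \ref{thm:reg_comp}) after base change to the non-totally-real field $K$. Theorem \ref{thm:reg_comp} and the residue argument of Theorem \ref{rmk:K4X} are stated using that the base field is $\Q$. To get around this, I would use that $\tau$ is an automorphism and that $r_B$ commutes with pullback. This gives $r_B(\tau^*\Xi_N^{-2,1})=\tau^* r_B(\Xi_N^{-2,1})$, where $r_B(\Xi_N^{-2,1})$ is computed over $\Q$. In the same way, $\tau^* r_3(2)(\xi_N^{-2,1}) = r_3(2)(\tau^*\xi_N^{-2,1})$, because the explicit formula \eqref{regulator} is functorial and kills coboundaries on the closed curve. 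The compatibility is therefore only needed for $\xi_N^{-2,1}$ over $\Q$, where Theorem \ref{thm:reg_comp} applies directly. Non-vanishing of $r_B(\Xi_N^{-2,1})$ follows, and hence so does the claim.
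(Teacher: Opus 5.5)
Your proposal is correct and is essentially the paper's argument: Theorem \ref{thm:asy} gives non-vanishing of the integral of $r_3(2)(\xi_N^{1,1}(\e))$, and Proposition \ref{cor:pb}(iii) together with de Jeu's compatibility (Theorem \ref{thm:reg_comp}) transfers this to $\Xi_N^{-2,1}$. Your final-paragraph route---apply the compatibility over $\Q$ to $\xi_N^{-2,1}$, then transport by the automorphism $\tau$---is exactly the order used in the proof of Theorem \ref{thm:nontrivial}, so you can drop the direct appeal to Theorem \ref{thm:reg_comp} over $K$ in your second step; you can also drop the claim that $r_3(2)(\xi_N^{1,1}(\e))$ lies in the $+$-part, which is not needed and need not hold when $\e$ is not real.
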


\begin{proof}
    Direct consequence of Proposition \ref{cor:pb}(iii) and Theorem \ref{thm:asy} upon invoking de Jeu's compatibility result for regulator maps (Theorem \ref{thm:reg_comp}).
\end{proof}

By being a bit more precise in our analysis of integrals, we can prove a much stronger result.

\begin{thm} \label{thm:nontrivial}
    For all $N\geq 3$, the element $\Xi_N^{-2,1} \in K_4^{(3)}(X_N)$ is non-trivial.
\end{thm}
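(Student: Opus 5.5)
We reduce the non-triviality of $\Xi_N^{-2,1}$ to the non-vanishing of a single real number, the regulator integral of Theorem~\ref{regintN2}. By Proposition~\ref{cor:pb}(iii), $\tau^*\xi_N^{-2,1} = -\xi_N^{1,1}(\e)$ modulo coboundaries. Since $\tau$ is an automorphism of $X_{N,K}$, and pullback to $X_{N,K}$ is injective on $K_4^{(3)}\otimes\Q$ by the transfer argument already used in the paper, it suffices to show that $\Xi_N^{1,1}(\e)\neq 0$ in $K_4^{(3)}(X_{N,K})$ for one admissible choice of $\e$. By Theorem~\ref{thm:reg_comp}, this follows once $\int_{\gamma_N} r_3(2)(\xi_N^{1,1}(\e))\neq 0$. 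We choose $\e$ as in Theorem~\ref{thm:asy}: $\e=-1$ for $N$ odd and $\e=-e^{\pi i/N}$ for $N$ even. The task is then to show that
\[
I_N := \frac23 \int_0^1 \Bigl(f_{N,\e}(t)-\frac{f_{1,-1}(t)}{N}\Bigr)dt + \frac1N\mathcal{L}_3(-1/4) - N^2\mathcal{L}_3\bigl(\e(1/4)^{1/N}\bigr)
\]
is non-zero for every $N\geq 3$. We will in fact show $I_N>0$.

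\textbf{Step 1: a lower bound for the dominant term.} Write $w=\e(1/4)^{1/N}$. For $N$ odd, $w=-c$ with $c=(1/4)^{1/N}\in[4^{-1/3},1)$. Since $\log|w|<0$, the formula $-\mathcal{L}_3(-c)=-\mathrm{Li}_3(-c)+\mathrm{Li}_2(-c)\log c-\tfrac13\log(1+c)\log^2 c$ can be bounded below by $-\mathrm{Li}_3(-c)-\tfrac13\log 2\,\log^2 c$, using $\mathrm{Li}_2(-c)<0$. Moreover $-\mathrm{Li}_3(-c)\geq c-c^2/8$. For $N$ even we need an analogous bound at the argument $-ce^{\pi i/N}$. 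Here the real parts $\Re\,\mathrm{Li}_k(-ce^{i\pi/N})$ are controlled by the alternating series $\sum (-1)^n c^n\cos(n\pi/N)/n^k$, whose first term dominates. In both cases the goal is an explicit estimate $-\mathcal{L}_3(w)\geq \kappa>0$, valid uniformly for $N\geq 3$ (for instance $\kappa\approx 0.5$).

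\textbf{Step 2: bounds for the remaining terms.} The proof of Theorem~\ref{thm:asy} shows that $f_{N,\e}<0$ and that it decreases monotonically in $N$. Hence
\[
0 > \frac23\int_0^1 f_{N,\e} > -\frac{\log 2\,\pi^2}{9}\approx -0.76 .
\]
For the correction term, $f_{1,-1}(t)=\log(1+t(1-t))\log(1-t)/t<0$, so $-\frac{2}{3N}\int_0^1 f_{1,-1}>0$ and it can be dropped. Finally $|\mathcal{L}_3(-1/4)|\leq \mathrm{Li}_3(1/4)+\text{small}<0.3$, so $\frac1N\mathcal{L}_3(-1/4)\geq -0.1$. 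Altogether,
\[
I_N \geq \kappa N^2 - 0.76 - 0.1 ,
\]
which is positive as soon as $\kappa N^2>0.86$. This holds for all $N\geq 3$ provided Step~1 gives any $\kappa\geq 0.1$.

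\textbf{Main obstacle.} The hardest part is Step~1 for even $N$, and in particular $N=4$ and $N=6$. There $w$ has argument $\pi+\pi/N$ and modulus only $(1/4)^{1/N}$, so the estimate cannot be made cleanly uniform. I would first attempt a uniform bound via monotonicity of $-\Re\,\mathrm{Li}_3(-ce^{i\theta})$ in $c$ and in $\theta$ on the relevant range. If that fails, the fallback is to treat $N\le N_0$ (small, say $N_0=6$) by direct rigorous numerical evaluation of $I_N$, and to use the uniform bound for $N>N_0$.
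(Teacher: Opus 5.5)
For odd $N$ your argument is the paper's. You use the same $\e=-1$, the same bound $\frac23\int_0^1 f_{N,\e}>-\log 2\,\pi^2/9$ from monotonicity in $N$, you drop the same positive $f_{1,-1}$ term, and you bound $-\mathcal{L}_3(-(1/4)^{1/N})$ from below; the paper gets that bound from $G_{-1}=-3\mathcal{L}_3(-\,\cdot\,)$ being increasing, you from the series.

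For even $N$ the routes genuinely differ. The paper fixes $\e=-e^{\pi i/N}$ but never estimates the formula there. It argues by descent instead. If $r_3(2)(\xi_N^{-2,1})$ vanished, Proposition~\ref{dist:cocycle} with $d=2$ gives $\pi_{N,N/2}^*\xi_{N/2}^{-2,1}=(1+g_N^{0,N/2})\xi_N^{-2,1}$. Injectivity of $\pi_{N,N/2}^*$ then passes the vanishing down to $N/2$. Iterating reaches either an odd $M\geq 3$, excluded by the odd case, or $N=4$, which is excluded only by the numerically computed $R_4\neq 0$. Your direct estimate treats both parities uniformly and, once finished, is purely analytic, so it removes the reliance on an uncertified floating-point value for powers of $2$. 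The descent is shorter and avoids trilogarithm estimates off the real axis.

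Your Step 1 for even $N$ is still only a plan, and the ``alternating series'' heuristic is wrong. For $N=4$ one has $w=-(1+i)/2$, and the signs of $\Re(w^n)$ run $-,0,+,-,+,0,-,+,\dots$. Your monotonicity idea does close the step. Write $w=-ce^{i\theta}$ with $\theta=\pi/N\leq\pi/4$. All three terms of $-\mathcal{L}_3(w)=-\Re\mathrm{Li}_3(w)+\Re\mathrm{Li}_2(w)\log c+\frac13\log|1-w|\log^2c$ are nonnegative, and
\[
-\Re\mathrm{Li}_3(w)=\int_0^c\!\!\int_0^s\log|1+re^{i\theta}|\,\frac{dr}{r}\,\frac{ds}{s}\;\geq\;-\tfrac12\mathrm{Li}_3(-c)\;\geq\;\tfrac12\Bigl(c-\frac{c^2}{8}\Bigr)>0.32 .
\]
This uses $1+2r\cos\theta+r^2\geq 1+r$ and $c\geq 2^{-1/2}$, so no numerical fallback is needed.

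There are three smaller points to fix.
\begin{enumerate}
\item $\mathcal{L}_3(-1/4)\approx-0.713$, because the $\log|z|$ terms are not small at $|z|=1/4$. The correct bound is $\frac1N\mathcal{L}_3(-1/4)\geq-0.24$.
\item The last term of $-\mathcal{L}_3(-c)$ is $+\frac13\log(1+c)\log^2c$, not minus; your lower bound survives this.
\item Theorem~\ref{thm:reg_comp} is stated over $\Q$. Apply it to $\xi_N^{-2,1}$ and pass to $\xi_N^{1,1}(\e)$ via $\tau^*$ at the level of $r_3(2)$, as the paper does, rather than applying it over $K$.
\end{enumerate}

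One caveat affects your proof and the paper's odd case equally: Theorem~\ref{regintN2} has a sign error. In its proof, $\phi(t)=1-t$ satisfies $\phi^*\rho^2=\rho^3$ but maps $[0,1/2]$ onto $[1/2,1]$ with reversed orientation. Hence $\int_0^1\rho^2=\int_0^{1/2}(\rho^2-\rho^3)$, not $\int_0^{1/2}(\rho^2+\rho^3)$.

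The error is visible at $N=3$. By Lemma~\ref{lem:reg_xi1} and Proposition~\ref{prop:regnum}, $I_3$ and $R_3$ both equal $\frac23(J(-1)-J(e^{\pi i/3}))$, where $J(z)=\int_{\delta_3}r_3(2)(\xi_3^{1,1}(z))$, and $R_3=7.149\ldots$. The displayed formula instead gives at least $7.658-\frac23\cdot 0.415\cdot\frac{\pi^2}{6}>7.2$. Here $7.658\ldots$ is the value of its trilogarithm terms, and its integrand is $h\log(1-t)/t$ with $0\leq h\leq 0.415$.

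Integrating by parts gives the correct identity
\[
I_N=\int_0^1\Bigl(\log|1-\e v|-\frac1N\log(1+v^N)\Bigr)\frac1t\log\frac{1-t}{t}\,dt,\qquad v=(t(1-t))^{1/N}.
\]
The stated right-hand side exceeds this by $\frac23\int_0^{1/2}\log(1-t)\,\Omega\geq 0$, where $\Omega=-h\,\dlog(t(1-t))+\log(t(1-t))\,dh$ and $h=\log|1-\e v|-\frac1N\log(1+v^N)$. To bound the excess, use three facts:
\begin{enumerate}
\item $h$ is increasing in $u=t(1-t)$;
\item $0\leq h\leq\log 2$;
\item $0\leq-\log(1-t)\leq 4\log 2\cdot u$ on $[0,1/2]$.
\end{enumerate}
Together these give an excess of at most $\frac23(\log 2)^2(1+\log 4)<0.77$. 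Your final inequality should therefore read $I_N\geq\kappa N^2-1.8$. This still holds for all $N\geq 3$, with $\kappa\geq 0.53$ for odd $N$ and $\kappa>0.32$ for even $N$.
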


\begin{proof}
    By Theorem \ref{thm:reg_comp}, it suffices to prove that $r_3(2)(\xi^{-2,1}_N)$ is non-trivial. By Proposition \ref{cor:pb}, it in turn suffices to prove that $r_3(2)(\xi^{1,1}_N(\e))$ is non-trivial for some $2N$-th root of unity $\e$ such that $\e^N=-1$. Choose $\e=-1$ if $N$ is odd and $\e=-e^{\frac{\pi i}{N}}$ if $N$ is even.

    Suppose first that $N$ is odd. By Theorem \ref{regintN2}, we have 
    \begin{equation*}
    \int_\gamma r_3(2)(\xi_N^{1,1}(-1))=\frac{2}{3}\int_0^1 \Bigl(f_{N,-1}(t) - \frac{f_{1,-1}(t)}{N}\Bigr)dt+\frac{1}{3}\Bigl( N^2 G_{-1}((1/4)^{1/N}) -\frac{1}{N} G_{-1}(1/4) \Bigr),
\end{equation*}
where $G_{-1}$ is the function defined by \eqref{fct:Gzx} and which is related to the function $\mathcal{L}_3$ via \eqref{GtoL3}.
We established during the course of the proof of Theorem \ref{thm:asy} that the sequence $\{\frac23 \int_0^1 f_{N,-1}(t)dt\}_{N\geq 1}$ is monotonously decreasing with limit given by $-\frac{\log(2)\pi^2}{9}$. Thus, for all $N\geq 3$, we have
\begin{align*}
    \int_\gamma r_3(2)(\xi_N^{1,1}(-1)) & > -\frac{\log(2)\pi^2}{9} -\frac{2}{3N}\int_0^1 f_1(t) dt +\frac{1}{3}\Bigl( N^2 G_{-1}((1/4)^{1/N}) -\frac{1}{N} G_{-1}(1/4) \Bigr) \\
    & > -\frac{\log(2)\pi^2}{9} +\frac{1}{3}\Bigl( N^2 G_{-1}((1/4)^{1/N}) -\frac{1}{N} G_{-1}(1/4) \Bigr) \\
    & > -\frac{\log(2)\pi^2}{9} +\frac{1}{3}\Bigl( N^2 G_{-1}(1/2) -\frac{1}{N} G_{-1}(1/4) \Bigr) \\
    & > -\frac{\log(2)\pi^2}{9} +\frac{1}{3}\Bigl( 4 G_{-1}(1/2) -\frac{1}{2} G_{-1}(1/4) \Bigr) \\
    & > -\frac{\log(2)\pi^2}{9} +3 \\
    & > -0.77 +3 \\
    & > 0.
\end{align*}
In the above inequalities, we chronologically used the fact that $\int_0^1 f_{N,-1}(t)dt > -\log(2)\pi^2/9,$ the fact that $-f_{1,-1}(x)>0$ for all $x\in ]0,1[$, the fact that $G_{-1}'(x)=g_{-1}(x)>0$ for all $x\in ]0,1[$ and thus $G_{-1}(x)$ is strictly increasing on this interval, coupled with the fact that $(1/4)^{1/N}>1/2$ since $N>2$, and finally the fact that $N>2$, followed by the estimates 
\[
\frac{1}{3}\Bigl( 4 G_{-1}(1/2) -\frac{1}{2} G_{-1}(1/4) \Bigr) \simeq 3.03\ldots \qquad \text{ and } \qquad -\frac{\log(2)\pi^2}{9} \simeq -0.76\ldots
\]
The non-triviality of $\int_\gamma r_3(2)(\xi_N^{1,1}(-1))$ implies the desired non-triviality of $r_3(2)(\xi_N^{1,1}(-1))$.

    Next, we turn to the case where $N$ is even. 
    Assume, by contradiction, that $\xi_N^{-2,1}=0$. Then, by Proposition \ref{dist:cocycle} and Proposition \ref{prop:trace}(i), we have 
    \[
    \pi_{N,N/2}^* (\xi_{N/2}^{-2,1}) = \xi_N^{-2,1}+\xi_N^{-2,1}(-1) = \xi_N^{-2,1}+\xi_N^{-2,1}(\zeta_N^{N/2})= (1+g^{0,N/2}) (\xi_N^{-2,1}) = 0.
    \]
    The map $\pi^*_{N, N/2}$ is injective since 
    $(\pi_{N, N/2})_* \circ \pi^*_{N, N/2}= \deg{\pi_{N, N/2}= 2},$ so the above calculation implies that $\xi_{N/2}^{-2,1}=0$. Repeating the process, we eventually reach the conclusion that $\xi_{M}^{-2,1}=0$ for some odd $M\geq 3$ or $\xi_{4}^{-2,1}=0$. The first option contradicts the first part of the proof. We deduce that $\xi_{4}^{-2,1}=0$. In Section \ref{sec:num} below, we numerically compute the values
    \[
    R_N = \int_{\gamma_N} r_3(2)(\xi_N^{-2,1})
    \]
    for various $N$. The value $R_4$ is recorded in Table \ref{table} and is non-zero. We have arrived at a contradiction and may thus conclude that $\xi_N^{-2,1}\neq 0$.
\end{proof}

\section{Regulator formulas via hypergeometric functions}\label{s:hyperg}

Let us normalize the differential form $\omega_N^{a, b}$ \eqref{omegars} in the following way:
\begin{equation*}
\widetilde{\omega}_N^{a, b}:= \left(\frac1N B\left(\frac{\langle a \rangle}N, \frac{\langle b \rangle}N \right) \right)^{-1} \omega_N^{a, b},
\end{equation*}
where the Beta function is the function defined by
\[
B(z_1,z_2):=\frac{\Gamma(z_1)\Gamma(z_2)}{\Gamma(z_1+z_2)}.
\]
By \cite[Proposition 4.9]{Ots11}, we then have
\begin{equation}\label{eq:normint}
     \int_{\gamma_N} \tilde\omega_N^{a,b}=1.
\end{equation}
Thus, for any $g_N^{r, s} \in G_N$, we have
\begin{align*}
\int_{g_N^{r, s} \gamma_N} \widetilde{\omega}_N^{a, b} = \int_{\gamma_N} g_N^{r, s} \widetilde{\omega}_N^{a, b}  = \zeta_N^{ar+bs}. 
\end{align*}
Hence, denoting by $c_{\infty}$ the complex conjugation on the coefficients, we have 
$$c_{\infty} \widetilde{\omega}_N^{a, b} = \widetilde{\omega}_N^{-a, -b}. $$

\begin{dfn}
Define a function of a positive real number $\al$ by 
$$F(\al;  x)=  F_1(\al; x)-F_2(\al; x), $$
where 
\begin{align*}
&F_1(\al; x):=\dfrac{\Gamma\left(\al \right)^2}{\Gamma\left(2 \al \right)}\sum_{i, n \ge 0} \left(\dfrac{2}{n+1} - \dfrac{1}{\al+i} \right) \dfrac{(\al, i)(\al, n+i+1)}{(2\al, n+2i+2)} x^i, \\
&F_2(\al; x):=\dfrac{\Gamma\left(\al \right)^2}{\Gamma\left(2\al \right)}\sum_{i, m, n \ge 0} \left(\dfrac1{m+1} + \dfrac1{n+1} \right) \dfrac{(\al, i)(\al, m+n+i+2) }{(2\al, m+n+2i+3)} x^i, 
\end{align*}
and $(\al, i) = \Gamma(\al+i)/\Gamma(\al)$ denotes the rising Pochhammer symbol. 
\end{dfn}

\begin{prop} \label{convergence}
The function $F(\alpha; x)$ converges absolutely for $|x|=1$. 
\end{prop}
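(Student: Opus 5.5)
Since $|x^i|=1$ when $|x|=1$, absolute convergence of $F(\al;x)$ on the unit circle amounts to the convergence of the two positive series
\[
S_1:=\sum_{i,n\ge 0}\Bigl|\tfrac{2}{n+1}-\tfrac{1}{\al+i}\Bigr|\,\frac{(\al,i)(\al,n+i+1)}{(2\al,n+2i+2)},\qquad
S_2:=\sum_{i,m,n\ge 0}\Bigl(\tfrac{1}{m+1}+\tfrac{1}{n+1}\Bigr)\frac{(\al,i)(\al,m+n+i+2)}{(2\al,m+n+2i+3)}.
\]
We may bound $|\frac{2}{n+1}-\frac{1}{\al+i}|\le \frac{2}{n+1}+\frac{1}{\al+i}$. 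The plan is to estimate the Pochhammer quotients with the Beta integral, rather than with Stirling asymptotics directly.

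The key identity follows from $(\al,k)=\Gamma(\al+k)/\Gamma(\al)$:
\[
\frac{(\al,p)(\al,q)}{(2\al,p+q)}=\frac{\Gamma(2\al)}{\Gamma(\al)^2}\,B(\al+p,\al+q)=\frac{\Gamma(2\al)}{\Gamma(\al)^2}\int_0^1 t^{\al+p-1}(1-t)^{\al+q-1}\,dt .
\]
In $S_1$ we have $p=i$ and $q=n+i+1$, so $p+q=n+2i+1$. The actual denominator is $(2\al,n+2i+2)$, which is larger by the factor $2\al+n+2i+1\ge 1$. Hence each term of $S_1$ is at most $\frac{\Gamma(2\al)}{\Gamma(\al)^2}\bigl(\frac{2}{n+1}+\frac1{\al+i}\bigr)\int_0^1 t^{\al+i-1}(1-t)^{\al+n+i}\,dt/(n+2i+1+2\al)$. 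Similarly, in $S_2$ we take $p=i$ and $q=m+n+i+2$, with an extra factor $1/(2\al+m+n+2i+2)$.

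Since everything is nonnegative, we can interchange the sums and the integral by Tonelli. For $S_1$, we keep the extra factor $\frac{1}{n+2i+1}$ and sum over $i$ and $n$. This gives the integral
\[
\int_0^1 t^{\al-1}(1-t)^{\al}\sum_{i,n}\Bigl(\tfrac{2}{n+1}+\tfrac{1}{\al+i}\Bigr)\frac{(t(1-t))^i(1-t)^n}{n+2i+1}\,dt .
\]
We then use $\frac{1}{n+2i+1}\le\frac{1}{\sqrt{(n+1)(i+1)}}$ (AM–GM) to decouple the two sums. Sums of the form $\sum_n (1-t)^n/(n+1)^{3/2}$ are bounded uniformly in $t$. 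Sums of the form $\sum_i u^i/(i+1)^{1/2}$ with $u=t(1-t)\le 1/4$ are also uniformly bounded, and the term $\sum_n (1-t)^n/(n+1)^{1/2}\ll t^{-1/2}$ is integrable against $t^{\al-1}$ provided we keep track of the power. To avoid the case $\al\le 1/2$ causing trouble, we instead use the full factor $\frac1{n+2i+1}\le \frac1{n+1}$ together with $\frac1{\al+i}$. This gives $\sum_n(1-t)^n/(n+1)^2\le\zeta(2)$, so no singularity at $t=0$ appears. The remaining integral is $\int_0^1 t^{\al-1}(1-t)^\al\,dt<\infty$, multiplied by the convergent geometric-type series in $u\le1/4$.

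For $S_2$ we proceed in the same way. By symmetry it suffices to treat the $\frac{1}{m+1}$ term. Bounding $\frac{1}{m+n+2i+2}\le \frac{1}{n+1}$, we obtain
\[
\int_0^1 t^{\al-1}(1-t)^{\al+1}\sum_i u^i\sum_{m}\frac{(1-t)^m}{m+1}\sum_n\frac{(1-t)^n}{n+1}\,dt\;\ll\;\int_0^1 t^{\al-1}\log^2(1/t)\,dt<\infty,
\]
using $\sum_{m}\frac{(1-t)^m}{m+1}=\frac{-\log t}{1-t}$. The factor $(1-t)^{\al+1}$ absorbs the two denominators $1-t$.

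The main obstacle is the bookkeeping at $t\to0$, where the $n$-sums (and $m$-sums) produce $\log(1/t)$ or $1/t$ singularities. One must make sure that the extra $1/(\text{index})$ factor coming from the mismatch between the denominator Pochhammer and the Beta function is spent on the right variable, so that at worst logarithmic singularities remain against $t^{\al-1}$. The behaviour at $t\to1$ is harmless because $u^i$ is bounded by $4^{-i}$. If this allocation fails for small $\al$, a fallback is to compare directly via Stirling: $\frac{(\al,p)(\al,q)}{(2\al,p+q)}\asymp\frac{\Gamma(2\al)}{\Gamma(\al)^2}\,p^{\al-1/2}q^{\al-1/2}(p+q)^{1/2-2\al}\,2^{-(p+q)}\binom{p+q}{p}$, and sum the resulting explicit bounds.
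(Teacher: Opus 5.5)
Your proof is correct, but it takes a genuinely different route from the paper. The paper splits $F_1$ into two series with positive coefficients. It recognises the inner sums over $n$ (resp.\ over $(m,n)$ for $F_2$) as values at $1$ of a ${}_3F_2$ and of a Kamp\'e de F\'eriet series $F^{2;2;2}_{2;1;1}$, and invokes the known convergence criteria for these. It then argues that these values stay bounded in $i$ by computing their limit as $i\to\infty$. Finally it uses the duplication formula $(2\beta,2i)=2^{2i}(\beta,i)(\beta+\tfrac12,i)$ to identify the remaining $i$-series with ${}_2F_1$ and ${}_3F_2$ evaluated at $x/4$.

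You instead write each Pochhammer quotient as an Euler Beta integral, interchange sum and integral by Tonelli, and bound elementary series pointwise in $t$. Your observation $u=t(1-t)\le\frac14$ is precisely the integral-side counterpart of the argument $x/4$. Your route is self-contained, needing no external convergence theorems, and gives uniformity in $i$ for free. By contrast, the paper's limit computation as $i\to\infty$ tacitly requires justifying an interchange of limit and summation, e.g.\ via monotonicity of $\prod_k\frac{a+k}{2a+k}$ in $a$. The paper's route, on the other hand, places $F$ inside classical hypergeometric families, which fits the purpose of that section. As a by-product, your argument also yields an integral representation of $F(\al;x)$.

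There are two slips in your write-up, neither affecting the conclusion.

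For the $\frac{1}{\al+i}$ piece of $S_1$, the bound $\frac{1}{n+2i+1}\le\frac{1}{n+1}$ gives $\sum_n\frac{(1-t)^n}{n+1}=\frac{-\log t}{1-t}$, not a $\zeta(2)$ bound. So a logarithmic singularity at $t=0$ does appear, contrary to what you wrote. It is harmless, since $\int_0^1 t^{\al-1}\log(1/t)\,dt=\al^{-2}$, and $\frac{-\log t}{1-t}$ is bounded near $t=1$.

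Likewise, in $S_2$ the factor $(1-t)^{\al+1}$ does not literally absorb $(1-t)^{-2}$ when $\al<1$. What saves you is again the boundedness of $\frac{-\log t}{1-t}$ on $[\frac12,1]$. On $[0,\frac12]$ the integrand is, up to a constant, $t^{\al-1}\log^2(1/t)$, which is integrable.

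The AM--GM detour and the Stirling fallback can simply be dropped.
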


To prove the proposition, we prepare some hypergeometric functions. 
Let $${_{p+1}F_{p}}\left[ \left. 
\begin{matrix}
\alpha_1, \alpha_2, \ldots, \alpha_{p+1} \\
\beta_1, \beta_2, \ldots, \beta_p
\end{matrix}
\right| x
\right]
:=
\sum_{n \ge 0} \dfrac{\prod_{i=1}^{p+1}(\alpha_i, n)}{\prod_{i=1}^p(\beta_i, n)} \dfrac{x^n}{n!}$$
be a generalized hypergeometric function, which converges absolutely for $|x|<1$, and converges at $x=1$ if 
\begin{align} \label{convergence_condition1}
\operatorname{Re} \left(\sum_{j=1}^{p} \beta_j -\sum_{i=1}^{p+1} \alpha_i\right) > 0
\end{align}
(see \cite{Slater66}). 
Let 
\begin{multline*}
F^{A;B;B'}_{C;D;D'}\left[\left.
\begin{matrix}
a_1, \ldots, a'_A \\
c_1, \ldots, c_C 
\end{matrix}
; 
\begin{matrix}
b_1, \ldots, b_B \\
d_1, \ldots, d_D
\end{matrix}
;
\begin{matrix}
B'_1, \ldots, B'_{B'} \\
d'_1, \ldots d'_{D'}
\end{matrix}
\right| x, y
\right] \\
:=
\sum_{m, n \ge 0}\dfrac{\prod_{i=1}^A(a_i, m+n) \prod_{i=1}^B(b_i, m) \prod_{i=1}^{B'}(b'_i, n)}{\prod_{i=1}^C(c_i, m+n) \prod_{i=1}^{D} (d, m)\prod_{i=1}^{D'}(d, n)} \dfrac{x^my^n}{m!n!}  
\end{multline*}
be a Kamp\'e de F\'eriet hypergeometric function. 
In this paper, we consider the case $A=C$, $B=D+1$ and $B'=D'+1$. 
The double series $F^{A;B+1, C+1}_{A; B; C}(x, y)$ converges absolutely for $|x|<1$ and $|y|<1$, and 
converges at $|x| = |y|=1$ if 
\begin{equation}\label{convergence_condition2}
    \begin{cases}
        \operatorname{Re} \left(\sum_{i=1}^A c_i + \sum_{i=1}^B d_i - \sum_{i=1}^A a_i -\sum_{i=1}^{B+1} b_i \right) >0, & \\
\operatorname{Re} \left(\sum_{i=1}^A c_i + \sum_{i=1}^C d'_i - \sum_{i=1}^A a_i -\sum_{i=1}^{B+1} b'_i \right) >0, & \\
\operatorname{Re} \left(\sum_{i=1}^A c_i +\sum_{i=1}^B d_i + \sum_{i=1}^C d'_i - \sum_{i=1}^A a_i -\sum_{i=1}^{B+1} b_i  -\sum_{i=1}^{B+1} b'_i \right)>0 &
    \end{cases}
\end{equation} 
(see \cite[Theorem 1]{HMS92}). 

\begin{proof}[Proof of Proposition \ref{convergence}]
First, we prove that $F_1(\alpha; x)$ converges absolutely for $|x|=1$.  
We can write
\begin{align*}
&\sum_{i, n \ge 0} \left(\dfrac{2}{n+1} - \dfrac{1}{\al+i} \right) \dfrac{(\al, i)(\al, n+i+1)}{(2\al, n+2i+2)} x^i \\
&=\dfrac1{2 \alpha+1} \sum_{i, n \ge 0} \dfrac{(\alpha+1, n+i)(\alpha, i)(1, n)}{(2\alpha+2, n + 2i)(2, n)}x^i
-
\dfrac{1}{2 \alpha(2 \alpha+1)} \sum_{i, n \ge 0} \dfrac{(\alpha+1, n+i) (\alpha, i)^2}{(2 \alpha+2, n+2i)(\alpha+1, i)}x^i. 
\end{align*}
We only prove that the first term converges for $|x|=1$ since the second term is proved similarly. 
We can write  
$$\sum_{i, n \ge 0} \dfrac{(\alpha+1, n+i)(\alpha, i)(1, n)}{(2\alpha+2, n + 2i)(2, n)}x^i=
\left.
\sum_{i \ge 0}{_{3}F_{2}}\left[ \left. 
\begin{matrix}
\alpha+1+i,1,1 \\
2\alpha+2+2i, 2
\end{matrix}
\right| y
\right] \dfrac{(\alpha, i)(\alpha+1, i)}{(2\alpha+2, 2i)}x^i \right|_{y=1}.  $$
By \eqref{convergence_condition1}, ${}_3F_2(y):={_{3}F_{2}}\left[ \left. 
\begin{matrix}
\alpha+1+i,1,1 \\
2\alpha+2+2i, 2
\end{matrix}
\right| y
\right]$ converges for $|y|=1$ for all $0 \leq i < \infty$ and
$$\lim_{i \to \infty}{_{3}F_{2}}\left[ \left. 
\begin{matrix}
\alpha+1+i,1,1 \\
2\alpha+2+2i, 2
\end{matrix}
\right| 1
\right] 
={_{2}F_{1}}\left[ \left. 
\begin{matrix}
1,1 \\
2
\end{matrix}
\right| \frac12
\right] < \infty, $$
hence 
${_{3}F_{2}}(1)$
is bounded independently of $i$. 
Therefore, it suffices to show that
$$\sum_{i \ge 0} \dfrac{(\alpha, i)(\alpha+1, i)}{(2\alpha+2, 2i)}x^i$$
converges at $|x|=1$.  
By the duplication formula, we have 
$$(2 \alpha +2, 2i)=2^{2i}(\alpha+1, i) \left(\alpha+\frac32, i \right), $$
hence we obtain 
$$\sum_{i \ge 0} \dfrac{(\alpha, i)(\alpha+1, i)}{(2\alpha+2, 2i)}x^i={_{2}F_{1}}\left[ \left. 
\begin{matrix}
\alpha, 1 \\
\alpha+ \frac32
\end{matrix}
\right| \frac{x}4
\right], $$
which converges at $|x|=1$. 
Hence, the first asserion follows. 

Secondly, we prove that $F_2(\alpha; x)$ converges absolutely for $|x|=1$.   
We can write 
$$F_2(\al; x)=\dfrac{\Gamma\left(\al \right)\Gamma\left(\al\right)}{\Gamma\left(2\al \right)} \cdot \dfrac{1}{2(2 \alpha+1)}
\sum_{i \geq 0}
F_{2;1;1}^{2;2;2}\left[\left.
\begin{matrix}
\alpha+2+i, 3 \\
2 \alpha+3+2i, 2
\end{matrix}
; 
\begin{matrix}
1, 1 \\
2
\end{matrix}
;
\begin{matrix}
1, 1 \\
2
\end{matrix}
\right| y, z
\right]
\dfrac{(\alpha+2, i)(\alpha, i)}{(2\alpha+3, 2i)}x^i
\big|_{(y, z)=(1,1)}. 
$$
By \eqref{convergence_condition2}, $F_{2,1,1}^{2,2,2}(y, z)$ converges absolutely for $|y|=|z|=1$ and $0 \le i < \infty$.  
Since 
$$\lim_{i \to \infty}F_{2,1,1}^{2,2,2}(1, 1)=F_{1;1;1}^{1;2;2}\left[\left.
\begin{matrix}
3 \\
2
\end{matrix}
; 
\begin{matrix}
1, 1 \\
2
\end{matrix}
;
\begin{matrix}
1, 1 \\
2
\end{matrix}
\right| \frac12, \frac12
\right] < \infty, $$
$F_{2,1,1}^{2,2,2}(1, 1)$ is bounded independently of $i$. 
Therefore, it suffices to show that 
$$\sum_{i \ge 0} \dfrac{(\alpha+2, i)(\alpha, i)}{(2 \alpha+3, 2i)}x^i$$
converges absolutely at $|x|=1$. 
By the duplication formula, we have 
$$(2 \alpha+3, 2i)=2^{2i} \left(\dfrac{2 \alpha+3}2, i \right)\left(\alpha+2, i \right), $$
hence 
$$\sum_{i \ge 0} \dfrac{(\alpha+2, i)(\alpha, i)}{(2 \alpha+3, 2i)}x^i={_{3}F_{2}}\left[ \left. 
\begin{matrix}
\alpha, \alpha+2, 1 \\
\alpha+ \frac32, \alpha+1
\end{matrix}
\right| \frac{x}4
\right], $$
which converges absolutely at $|x|=1$. 
Hence, the second assertion follows. 
\end{proof}

The following result can be viewed as a generalization, from the setting of $K_2$ groups to $K_4$ groups of Fermat curves, of the main result of Otsubo \cite[Theorem 4.14]{Ots11}.

\begin{thm}\label{thm:reg_hyperg}
Let $\e$ be a $2N$-th root of unity with $\e^N=-1$. Then 
\[
r_3(2) (\xi_N^{1,1}(\e))=\dfrac1{6N} \sum_{1 \leq a <N} \e^{-a} \left(F\left(\dfrac{a}{N};-1 \right) - F\left(1-\dfrac{a}{N};-1 \right)\right) \widetilde{\omega}_N^{a, a}. 
\]
\end{thm}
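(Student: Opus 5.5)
The plan is to determine the class $r_3(2)(\xi_N^{1,1}(\e))\in H^1(X_N(\C),\R(2))^+$ by computing its periods over the generators $g_N^{r,s}\gamma_N$ of $H_1(X_N(\C),\Q)$. This class sits in a small isotypic piece, so it suffices to compute one integral per isotypic component.

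\textbf{Step 1: restrict to the diagonal characters.} By Proposition \ref{prop:trace}(ii), $\xi_N^{1,1}(\e)$ is invariant under $G_N^{1,1}=\langle g_N^{1,-1}\rangle$. Hence, by Proposition \ref{projection} and Lemma \ref{lem:proj_1form}, its regulator lies in the span of $\widetilde\omega_N^{a,a}$ and their conjugates. We therefore write
\[
r_3(2)(\xi_N^{1,1}(\e))=\sum_{(a,a)\in I_N} c_a\,\widetilde\omega_N^{a,a}.
\]
This index set is $1\le a<N$ with $2a\not\equiv 0$; if $N$ is even, the term $a=N/2$ must be checked to be absent separately.

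\textbf{Step 2: extract the coefficients.} Using $\int_{g_N^{r,s}\gamma_N}\widetilde\omega_N^{a,b}=\zeta_N^{ar+bs}$ and character orthogonality, we get
\[
c_a=\frac1N\sum_{k\in\Z/N\Z}\zeta_N^{-ak}\int_{g_N^{k,0}\gamma_N} r_3(2)(\xi_N^{1,1}(\e)).
\]
By Proposition \ref{prop:trace}(i) and Lemma \ref{lem:reg_xi1}, each such integral is an explicit combination of $I(\z):=\int_{\delta_N} r_3(2)(\xi_N^{1,1}(\z))$ over $\z\in\e\mu_N$. Consequently $c_a$ equals $\e^{-a}$ times a discrete Fourier coefficient $\frac1N\sum_r \zeta_N^{-ar} I(\e\zeta_N^r)$, up to a correction from the averaged term in Lemma \ref{lem:reg_xi1}. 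That correction is constant in $r$ and is killed for $a\neq0$.

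\textbf{Step 3: compute the Fourier coefficient.} By Lemma \ref{lem:reg_xi2}, $I(\z)$ is an integral in $t$ of expressions linear in $\log|1-\z v|$ and $\dlog|1-\z v|$ with $v=(t(1-t))^{1/N}$. I would expand
\[
\log(1-\z v)=-\sum_{m\ge1}\frac{\z^m v^m}{m}
\]
and take real parts. The Fourier projection then selects the exponents $m\equiv a\pmod N$ from the $\z^m$ part and $m\equiv -a\pmod N$ from the $\bar\z^m$ part. This is where the antisymmetric combination $F(a/N;\cdot)-F(1-a/N;\cdot)$ originates, with the minus sign coming from the term $\overline{\e}^{m}$ versus $\e^{m}$. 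Writing $m=a+Nj$ and $\e^{Nj}=(-1)^j$ produces the argument $x=-1$.

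Each term becomes a Beta-type integral. For $\rho^1$ it is $\int_0^1 (t(1-t))^{\alpha+j}\bigl(\tfrac{\log(1-t)}{t}+\tfrac{\log t}{1-t}\bigr)dt$ with $\alpha=a/N$. For $\rho^2$ there are similar integrals carrying the weights $\log t\,\log(t(1-t))$. I would evaluate these by differentiating $B(\alpha+j+u,\alpha+j+w)$ in $u,w$ and expanding the resulting digamma differences $\psi(\alpha+j)-\psi(2\alpha+2j+\cdots)$ as sums over $n$ (giving the terms $\frac1{n+1}$) and over pairs $(m,n)$ for second derivatives. After factoring out $\Gamma(\alpha)^2/\Gamma(2\alpha)$ and $\frac1NB(\alpha,\alpha)$ from the normalization of $\widetilde\omega_N^{a,a}$, the ratios of Pochhammer symbols appearing in $F_1$ and $F_2$ should result.

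\textbf{Main obstacle.} The hardest step is this last bookkeeping: matching the double and triple series from the first and second derivatives of Beta integrals exactly with the displayed $F_1,F_2$, including the overall constant $\frac1{6N}$. The first derivatives produce $F_1$ and the mixed second-derivative terms produce $F_2$. To justify the interchange of summation and integration at $|x|=1$, I would use Proposition \ref{convergence} (absolute convergence), and obtain $x=-1$ as a limit $x\to-1$ inside the disc.
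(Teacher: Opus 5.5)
Your Steps 1--2 are correct, and they follow a genuinely different route from the paper. The paper works with forms on $|x|,|y|<1$:
\begin{itemize}
\item it writes $\tilde\rho^1_{N,\e}+\tilde\rho^2_{N,\e}$ as the real part of a multivalued form and expands it as a double power series;
\item it reduces each monomial to a multiple of $\omega_N^{a,a}$ modulo exact forms (Otsubo's Lemma 4.19);
\item it integrates over $\gamma_N$ and then extends to all cycles by appeal to Otsubo's work.
\end{itemize}
You instead obtain all periods from the single path $\delta_N$, using $G_N$-equivariance, Lemma \ref{lem:reg_xi1} and a Fourier transform in the twist $\z\in\e\mu_N$. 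This avoids both the exact-form bookkeeping and the extension step. Since $|\z v|\le 4^{-1/N}<1$ on $\delta_N$, the expansion of $\log(1-\z v)$ converges uniformly, so no Abel limit and no appeal to Proposition \ref{convergence} is needed. There are, however, two genuine problems.

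\textbf{The power of $\e$.} The factor $\e^{-a}$ in Step 2 is unjustified and in fact wrong. By your own formula, $c_a=\frac1N\sum_k\zeta_N^{-ak}I(\e\zeta_N^k)$ exactly. The projection picks up $\e^{a+Nj}=\e^a(-1)^j$ from the $\z^m$-part and $\bar\e^{\,N-a+Nj}=-\e^a(-1)^j$ from the $\bar\z^m$-part, so $c_a\in\e^{a}\R$. Executed honestly, your method proves the formula with $\e^{a}$, not $\e^{-a}$.

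The $\e^{a}$ version is the correct one. The paper's own identity $\int_\gamma r_3(2)(\xi_N^{1,1}(\e))=\frac1{3N}\Re\sum_a\e^aF(\frac aN;-1)\int_\gamma\widetilde\omega_N^{a,a}$ also yields $\frac{\e^a}{6N}\bigl(F(\frac aN;-1)-F(1-\frac aN;-1)\bigr)$ once $\Re(zw)=\Re z\Re w-\Im z\Im w$ is expanded correctly. The last display of the paper's proof loses the sign of $\frac1{2i}\cdot\frac1{2i}=-\frac14$. The two versions agree on $\gamma_N$ and when $\e=-1$, but their periods on $g_N^{k,0}\gamma_N$ differ in general for non-real $\e$. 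So you should not insert $\e^{-a}$ by hand.

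\textbf{Step 3.} This step carries all the content, and ``should result'' is not an argument. Matching digamma expansions to $F_1,F_2$ term by term is the hard direction; evaluate both sides in closed form instead.

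On the $\delta_N$ side, put $u=t(1-t)$ and $\beta=m/N$.
\begin{itemize}
\item The $\z^m$-coefficient of $\int_0^1\rho^1_{N,\z}$ is $-\frac{2}{3N\beta}B(\beta,\beta+1)\bigl(\psi(\beta+1)-\psi(2\beta+1)\bigr)$.
\item The $\z^m$-part of the form $\rho^2_{N,\z}$ is $\frac1{3N}\log t\cdot u^\beta(\frac1\beta-\log u)\frac{du}u$. Since $u^\beta(\frac1\beta-\log u)\frac{du}u=d\bigl[u^\beta(\frac2{\beta^2}-\frac{\log u}\beta)\bigr]$, you can integrate by parts against $\log t$.
\end{itemize}
In the sum, the $\psi(2\beta+1)$ terms cancel and $\psi(\beta+1)-\psi(\beta)=\frac1\beta$. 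This leaves $I(\z)=-\frac N2\Re\sum_{m\ge1}\frac{\z^m}{m^2}B(\frac mN,\frac mN)$.

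On the $F$ side, write $\frac{\Gamma(\beta)\Gamma(\beta+k+1)}{\Gamma(2\beta+k+2)}=\frac1\beta\int_0^1t^{\beta+k}(1-t)^\beta\,dt$ with $\beta=\al+i$, and sum the $n$- and $(m,n)$-series under the integral. Then $F_1$ and $F_2$ become Beta integrals against $\log(1-t)$, and the same cancellation gives
\[
F(\al;x)=-\frac32\sum_{i\ge0}\frac{B(\al+i,\al+i)}{(\al+i)^2}\,x^i .
\]
Hence, with $\al=a/N$ and the constant included,
\[
c_a=-\frac{\e^a}{4N}\sum_{j\ge0}(-1)^j\Bigl(\frac{B(\al+j,\al+j)}{(\al+j)^2}-\frac{B(1-\al+j,1-\al+j)}{(1-\al+j)^2}\Bigr)=\frac{\e^a}{6N}\bigl(F(\al;-1)-F(1-\al;-1)\bigr).
\]
As a check, for $N=3$ this gives $\frac19\bigl(F(\frac23;-1)-F(\frac13;-1)\bigr)=7.14926\ldots$, matching $R_3$ in Section \ref{subsec:regnum}. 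The term $a=N/2$ needs no separate check, since its coefficient $F(\frac12;-1)-F(\frac12;-1)$ vanishes.
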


\begin{proof}
Combining Lemma \ref{lem:reg_xi1} with equation \eqref{eq1} yields
\[
\int_{\gamma_N} r_3(2)(\xi_N^{1,1}(\varepsilon))=\int_{\gamma_N} (\tilde\rho_{N,\varepsilon}^1+\tilde\rho_{N,\varepsilon}^2),
\]
where, by \eqref{eq2}, we have
\begin{align*}
\tilde\rho_{N,\varepsilon}^1+\tilde\rho_{N,\varepsilon}^2 & =  -\frac13 \left(-\log|1-x^N| d\log|x^N| + \log |x^N| d\log|1-x^N| \right) \cdot \log|1- \e xy| \notag \\
& \qquad + \frac{N}3 \left(-\log|1- \e xy| d\log|xy| + \log |xy| d\log|1- \e xy| \right) \cdot \log|x^N|  \notag \\
&=\Re\left( -\frac13 \left(-\log(1-x^N) d\log(x^N) + \log (x^N) d\log(1-x^N) \right) \cdot \log(1- \e xy)  \right.  \\
& \qquad\left.  + \frac{N}3 \left(-\log(1- \e xy) d\log(xy) + \frac1N \log (xy)^N d\log(1- \e xy) \right) \cdot \log(x^N) \right).  \notag 
\end{align*}
First, we compute the term $\log(1-x^N) d\log x^N \log(1- \e xy)$. 
For $|x|<1$, $|y|<1$, we have 
\begin{multline*}
\log(1-x^N) d\log x^N  \log(1- \e xy)
= N \sum_{m, n \ge 1} \dfrac{(\e xy)^m}{m} \dfrac{(x^N)^n}{n} \dfrac{dx}x \\
=N \sum_{m, n \ge 1} \dfrac{\e^m}{mn} \omega_N^{m+Nn, m+N} \equiv N \sum_{m, n \ge 1} \dfrac{\e^m}{n(2m+Nn)} \omega_N^{m+Nn, m}
\end{multline*}
modulo exact forms by \cite[Lemma 4.19]{Ots11}. 
Put $m=a+Ni$. Then we obtain 
\begin{align*}
&N \sum_{m, n \ge 1} \dfrac{\e^m}{n(2m+Nn)} \omega_N^{m+Nn, m} \\
&= N \sum_{1 \leq a \leq N} \e^a\sum_{i, n \ge 0} \dfrac{(-1)^i}{(n+1)} \dfrac{1}{2a+N(n+2i+1)} \omega_N^{a+N(n+i+1), a+Ni} \\
&\equiv  N \sum_{1 \leq a \leq N} \e^a\sum_{i, n \ge 0} \dfrac{(-1)^i}{(n+1)} \dfrac{\left(\frac{a}N, n+i+1 \right)\left(\frac{a}N, i \right)}{(2a+N(n+2i+1)) \left(\frac{2a}N, n+2i+1 \right)} \omega_N^{a, a} \\
&=  \sum_{1 \leq a \leq N} \e^a\sum_{i, n \ge 0} \dfrac{(-1)^i}{(n+1)} \dfrac{\left(\frac{a}N, n+i+1 \right)\left(\frac{a}N, i \right)}{\left(\frac{2a}N, n+2i+2 \right)} \omega_N^{a, a}. 
\end{align*}
We compute similarly that 
\begin{align*}
\log x^N  d\log (1-x^N) \log(1- \e xy)&= \log (1-y^N)  d\log (y^N) \log(1- \e xy) \\
&=- \sum_{1 \leq a \leq N} \e^a\sum_{i, n \ge 0} \dfrac{(-1)^i}{(n+1)} \dfrac{\left(\frac{a}N, n+i+1 \right)\left(\frac{a}N, i \right)}{\left(\frac{2a}N, n+2i+2 \right)} \omega_N^{a, a}. 
\end{align*}

Next, we compute the term $\log(1- \e xy) d\log (xy) \log(x^N)$. 
We have 
\begin{align*}
\log(1- \e xy) d\log (xy) \log(x^N) 
&=\sum_{m, n \ge 1} \dfrac{(\e xy)^m}{m} \dfrac{(y^N)^n}{n} \left(\dfrac{dx}x + \dfrac{dy}y \right) \\
&=\sum_{m, n \ge 1} \dfrac{(\e xy)^m}{m} \dfrac{(y^N)^n}{n} \dfrac{y^N-x^N}{xy^N}dx \\
&=\sum_{m, n \ge 1} \dfrac{\e^m}{mn} (\omega_N^{m, m+N+Nn} -\omega_N^{m+N, m+Nn}) \\
& \equiv \sum_{m, n \ge 1} \dfrac{N\e^m}{m (2m+Nn)} \omega_N^{m, m+Nn} 
\end{align*}
modulo exact forms.
Put $m=a+Ni$. Then we have 
\begin{align*}
\sum_{m, n \ge 1} \dfrac{N\e^m}{m (2m+Nn)} & \omega_N^{m, m+Nn} =\sum_{1 \leq a \leq N} \e^a \sum_{i, n \ge 0} \dfrac{N(-1)^i}{(a+Ni) (2a+N(n+2i+1)} \omega_N^{a+Ni, a+N(n+i+1)} \\
& \equiv \sum_{1 \leq a \leq N} \e^a \sum_{i, n \ge 0} \dfrac{(-1)^i}{(a+Ni) \left(\frac{2a}{N}+n+2i+1\right) } \dfrac{\left(\frac{a}N, i \right)\left(\frac{a}N, n+i+1 \right)}{\left(\frac{2a}N, n+2i+1 \right)}
\omega_N^{a, a} \\
&= \sum_{1 \leq a \leq N} \e^a \sum_{i, n \ge 0} \dfrac{(-1)^i}{(a+Ni)} \dfrac{\left(\frac{a}N, i \right)\left(\frac{a}N, n+i+1 \right)}{\left(\frac{2a}N, n+2i+2 \right)}
\omega_N^{a, a}. 
\end{align*}

Finally, we deal with the term $ \log (xy) d\log(1- \e xy) \log(x^N)$. 
We compute that 
\begin{align*}
\frac1N & \log  (xy)^N  d\log(1- \e xy) \log(x^N) \\
&=\frac1N\log (x^Ny^N) d\log(1- \e xy) \log(x^N) \\
&=\frac1N\log (1-x^N)(1-y^N) d\log(1- \e xy) \log(x^N) \\
&=\frac1N \sum_{m, n \ge 1} \left(\dfrac{x^{Nm}}{m} + \frac{y^{Nm}}{m} \right) \dfrac{y^{Nn}}{n} \sum_{l \ge 1} (\e xy)^l (-1) \left(\dfrac{y^N-x^N}{xy^N} \right)dx \\
&=- \frac1N \sum_{l, m, n \ge 1}\dfrac{\e^l}{mn}\left(\omega_N^{l+Nm, l+Nn+N} -\omega_N^{l+Nm+N, l+Nn} +\omega_N^{l, l+Nn+Nm+N} -\omega_N^{l+N, l+Nm+Nn} \right) \\
&=-\frac1N \sum_{l, m, n \ge 1}\dfrac{\e^l}{mn} \dfrac{1}{(2l+N(m+n))} \left(N(n-m)  \omega_N^{l+Nm, l+Nn} +N(n+m) \omega_N^{l, l+N(n+m)}\right). 
\end{align*}
Put $l = a+Ni$. Then we have 
\begin{align*}
&-\frac1N \sum_{1 \leq a \leq N} \e^a \sum_{i, m, n \ge 0}\dfrac{(-1)^i}{(m+1)(n+1)} \dfrac{N}{(2a+N(m+n+2i+2))\left(\frac{2a}N, m+n+2i+2 \right)} \\
& \times \left((n-m) \left(\frac{a}N, m+i+1 \right)\left(\frac{a}N, n+i+1 \right)+(n+m+2)\left(\frac{a}N, i \right)\left(\frac{a}N, n+m+i+2 \right) \right) \omega_N^{a, a} \\
&=-\frac1N \sum_{1 \leq a \leq N} \e^a  \sum_{i, m, n \ge 0}\dfrac{(-1)^i}{(m+1)(n+1)} \dfrac{1}{\left(\frac{2a}N, m+n+2i+3 \right)} \\
& \times \left((n-m) \left(\frac{a}N, m+i+1 \right)\left(\frac{a}N, n+i+1 \right)+(n+m+2)\left(\frac{a}N, i \right)\left(\frac{a}N, n+m+i+2 \right) \right) \omega_N^{a, a}. 
\end{align*}
The term 
$$\sum_{i, m, n \ge 0}\dfrac{(-1)^i}{(m+1)(n+1)} \dfrac{(n-m) \left(\frac{a}N, m+i+1 \right)\left(\frac{a}N, n+i+1 \right)}{\left(\frac{2a}N, m+n+2i+3 \right)}$$
is equal to zero since $$\frac{(-1)^i}{(m+1)(n+1)} \frac{(n-m) \left(\frac{a}N, m+i+1 \right)\left(\frac{a}N, n+i+1 \right)}{\left(\frac{2a}N, m+n+2i+3 \right)}$$ is anti-symmetric with respect to $m$ and $n$. 
Putting everything together shows that $\int_{\gamma_N} r_3(2)(\xi_N^{1,1}(\e))$ is the integral over $\gamma_N$ of the differential form
\begin{align*}
 & \Re \Bigl( \frac{2}{3}\sum_{1 \leq a \leq N} \e^a\sum_{i, n \ge 0} \dfrac{(-1)^i}{(n+1)} \dfrac{\left(\frac{a}N, n+i+1 \right)\left(\frac{a}N, i \right)}{\left(\frac{2a}N, n+2i+2 \right)} \omega_N^{a, a} \\
& \qquad -\frac{N}{3} \sum_{1 \leq a \leq N} \e^a \sum_{i, n \ge 0} \dfrac{(-1)^i}{(a+Ni)} \dfrac{\left(\frac{a}N, i \right)\left(\frac{a}N, n+i+1 \right)}{\left(\frac{2a}N, n+2i+2 \right)}
\omega_N^{a, a} \\
& \qquad\qquad -\frac{1}{3}  \sum_{1 \leq a \leq N} \e^a  \sum_{i, m, n \ge 0}\dfrac{(-1)^i}{(m+1)(n+1)} \dfrac{1}{\left(\frac{2a}N, m+n+2i+3 \right)} \\
& \qquad\qquad\qquad\qquad\qquad\qquad \times \left((n+m+2)\left(\frac{a}N, i \right)\left(\frac{a}N, n+m+i+2 \right) \right) \omega_N^{a, a} \Bigr) \\
& \quad = \frac{1}{3} \Re \Bigl(  \sum_{1 \leq a \leq N} \e^a \left( \sum_{i, n \ge 0} (-1)^i \dfrac{\left(\frac{a}N, n+i+1 \right)\left(\frac{a}N, i \right)}{\left(\frac{2a}N, n+2i+2 \right)} \left( \dfrac{2}{n+1} - \dfrac{1}{\frac{a}{N}+i}\right) \right. \\
& \qquad\qquad \left. - \sum_{i, m, n \ge 0}\dfrac{(-1)^i}{(m+1)(n+1)} \dfrac{(n+m+2)\left(\frac{a}N, i \right)\left(\frac{a}N, n+m+i+2 \right)}{\left(\frac{2a}N, m+n+2i+3 \right)} \right)\omega_N^{a, a} \Bigr).
\end{align*}
Observe that 
\begin{multline*}
\e^{a} F\left( \frac{a}{N} ; -1 \right) =\e^a \dfrac{\Gamma\left(\frac{a}{N} \right)^2}{\Gamma\left(\frac{2a}{N} \right)} \left( \sum_{i, n \ge 0} \left(\dfrac{2}{n+1} - \dfrac{1}{\frac{a}{N}+i} \right) \dfrac{(\frac{a}{N}, i)(\frac{a}{N}, n+i+1)}{(\frac{2a}{N}, n+2i+2)} (-1)^i \right. \\
 \left. -\sum_{i, m, n \ge 0} \left(\dfrac1{m+1}+ \dfrac1{n+1} \right) \dfrac{(\frac{a}{N}, i)(\frac{a}{N}, m+n+i+2) }{(\frac{2a}{N}, m+n+2i+3)} (-1)^i  \right).
\end{multline*}
We thus conclude that
\[
\int_{\gamma_N}r_3(2)(\xi_N^{1,1}(\e)) =\frac{1}{3} \Re  \left(\sum_{1 \leq a \leq N} \dfrac{\Gamma\left(\frac{2a}{N} \right)}{\Gamma\left(\frac{a}{N} \right)^2} \e^a F\left( \frac{a}{N} ; -1 \right) \int_{\gamma_N} \omega_N^{a, a} \right).
\]
Using the formula $$\dfrac{\Gamma\left(\frac{2a}{N} \right)}{\Gamma\left(\frac{a}{N} \right)^2}=B\left( \frac{a}{N}, \frac{a}{N}\right)^{-1},$$ we arrive at the formula
\[
\int_{\gamma_N} r_3(2)(\xi_N^{1,1}(\e)) =\frac{1}{3} \Re  \left(\sum_{1 \leq a \leq N} \e^a F\left( \frac{a}{N} ; -1 \right) B\left( \frac{a}{N}, \frac{a}{N}\right)^{-1} \int_{\gamma_N} \omega_N^{a, a} \right),
\]
which in turn yields 
\begin{equation}\label{eq:gammaN_reg_hyper}
\int_{\gamma_N} r_3(2)(\xi_N^{1,1}(\e)) =\frac{1}{3N} \Re  \left(\sum_{1 \leq a \leq N} \e^a F\left( \frac{a}{N} ; -1 \right) \int_{\gamma_N}\tilde{\omega}_N^{a, a}, \right).
\end{equation}
Since $\gamma_N$ generates $H_1(X_N(\C),\Q)$ as a cyclic $\Q[G_N]$-module (see the discussion preceding Lemma \ref{lem:reg_xi1}), we deduce that for all $\gamma\in H_1(X_N(\C),\Z)$ we have 
$$\int_{\gamma} r_3(2)(\xi_N^{1,1}(\e))=\frac1{3N} \Re \left( \sum_{1 \leq a \leq N} \e^a F\left(\frac{a}N; -1\right)\int_{\gamma} \widetilde{\omega}_N^{a, a} \right). $$
Indeed, as $(r,s)$ ranges over $I_N$, the cycles $\kappa^{r, s}_N:=(1-g_N^{r, 0}) (1-g_N^{0, s}) \delta_N$ give a basis of $H_1(X_N(\C), \Q)$ (see \cite[Theorem 4.1]{Ots}). 
On $\kappa_N^{r, s}$, we have $|x|<1$ and $|y|<1$ except at the cusps, hence equation \eqref{eq:gammaN_reg_hyper} holds for any $\gamma$ by an argument similar to \cite{Ots11}. 
Since 
\begin{align*}
\Re \left(\e^a F\left(\frac{a}N;-1 \right)\right) \cdot \Re\left(\widetilde{\omega}_N^{a, a} \right)
= \dfrac12(\e^a+\e^{-a}) F\left( \frac{a}N;-1 \right) \cdot \dfrac12 (\widetilde{\omega}_N^{a, a}+\widetilde{\omega}_N^{N-a, N-a})
\end{align*}
and 
\begin{align*}
\Im \left(\e^a F\left(\frac{a}N;-1 \right)\right) \cdot \Im\left(\widetilde{\omega}_N^{a, a} \right)
= \dfrac1{2i}(\e^a-\e^{-a}) F\left( \frac{a}N;-1 \right) \cdot \dfrac1{2i} (\widetilde{\omega}_N^{a, a}-\widetilde{\omega}_N^{N-a, N-a}), 
\end{align*}
we obtain
\begin{align*}
&\frac1{3N} \Re \left( \sum_{1 \leq a \leq N} \e^a F\left(\frac{a}N; -1\right)\int_{\gamma} \widetilde{\omega}_N^{a, a} \right) \\
& \qquad = 
\dfrac1{12N}
\sum_{1 \leq a \leq N} 
(\e^a + \e^{-a}) \left(F\left(\frac{a}N; -1\right)-F\left(1-\frac{a}N; -1\right) \right) \int_{\gamma} \widetilde{\omega}_N^{a,a} \\
&\qquad\qquad-
\dfrac1{12N}
\sum_{1 \leq a \leq N} 
(\e^a - \e^{-a}) \left(F\left(\frac{a}N; -1\right)-F\left(1-\frac{a}N; -1\right) \right)
\int_{\gamma} \widetilde{\omega}_N^{a,a}. 
\end{align*}
Note that $\omega_N^{N, N}$ is an exact form and therefore we may drop the term $a=N$.
In conclusion, we have shown that 
\[
\int_{\gamma} r_3(2)(\xi_N^{1,1}(\e))=\dfrac1{6N} \sum_{1 \leq a <N} \e^{-a} \left(F\left(\dfrac{a}{N};-1 \right) - F\left(1-\dfrac{a}{N};-1 \right)\right) \int_{\gamma}\widetilde{\omega}_N^{a, a}
\]
for all $\gamma\in H_1(X_N(\C),\Z)$,
thereby completing the proof. 
\end{proof}

\begin{cor}\label{cor:indep}
For any $N$ not divisible by $3$, the elements $\Xi_N^{-2,1}$, $\Xi_N^{1,-2}$ and $\Xi_N^{1,1}(\e)$ are non-zero and linearly independent in $K_4^{(3)}(X_{N, K})$ with $K=\Q(\e)$.
In particular, when $N$ is odd, the elements $\Xi_N^{-2,1}$, $\Xi_N^{1,-2}$ and $\Xi_N^{1,1}(-1)$ are non-zero and linearly independent in $K_4^{(3)}(X_N)$.
\end{cor}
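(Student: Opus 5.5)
We will prove the statement by computing the regulators of the three elements as explicit combinations of the forms $\widetilde{\omega}_N^{c,d}$, and then observing that these combinations live in pairwise disjoint sets of Fermat eigenspaces. By Theorem \ref{thm:reg_comp}, $\frac12 r_B\circ\varphi_{\mathrm{dJ}}=r_3(2)$ up to sign. Hence it suffices to show that $r_3(2)(\xi_N^{-2,1})$, $r_3(2)(\xi_N^{1,-2})$ and $r_3(2)(\xi_N^{1,1}(\e))$ are non-zero and linearly independent in $H^1(X_N(\C),\R(2))$. We may compute them after base change to $\C$ and pull back along automorphisms of $X_{N,K}$.

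First we express all three regulators through the one computed in Theorem \ref{thm:reg_hyperg}. By Proposition \ref{cor:pb}(iii), $\tau^*\xi_N^{-2,1}=-\xi_N^{1,1}(\e)$ modulo coboundaries. Since $\tau$ is an involution (check: $\tau^2=\mathrm{id}$ since $\e^2\cdot\e^{-2}$ works out; in any case $\tau$ is an automorphism), we get $r_3(2)(\xi_N^{-2,1})=-(\tau^{-1})^*r_3(2)(\xi_N^{1,1}(\e))$. By Proposition \ref{cor:pb}(ii), $r_3(2)(\xi_N^{1,-2})=-\iota^*r_3(2)(\xi_N^{-2,1})$. Theorem \ref{thm:reg_hyperg} gives
\[
r_3(2)(\xi_N^{1,1}(\e))=\sum_{1\le a<N} c_a\,\widetilde{\omega}_N^{a,a},\qquad c_a=\tfrac{1}{6N}\e^{-a}\Bigl(F\bigl(\tfrac aN;-1\bigr)-F\bigl(1-\tfrac aN;-1\bigr)\Bigr).
\]
To transport this, we compute $\tau^*$ and $\iota^*$ on the forms $\omega_N^{a,b}=x^ay^{b-N}dx/x$. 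Directly, $\tau^*\omega_N^{a,b}$ is a scalar multiple of $\omega_N^{-a-b,b}$ and $\iota^*\omega_N^{a,b}=-\omega_N^{b,a}$. Indeed, $\tau g^{r,s}\tau^{-1}=g^{-r,s-r}$, so $\tau$ permutes the eigenspaces of Lemma \ref{lem:proj_1form}. Therefore:
\begin{itemize}
\item $r_3(2)(\xi_N^{1,1}(\e))$ is supported on the eigenspaces $(a,a)$;
\item $r_3(2)(\xi_N^{-2,1})$ is supported on $(-2a,a)$;
\item $r_3(2)(\xi_N^{1,-2})$ is supported on $(a,-2a)$.
\end{itemize}
In each case the coefficients are non-zero multiples of the $c_a$. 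This matches Proposition \ref{projection}.

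Next comes the disjointness, which is where $3\nmid N$ enters. The supports $\{(a,a)\}$, $\{(-2a,a)\}$ and $\{(a,-2a)\}$, for $a\not\equiv0$, are pairwise disjoint in $G_N$ exactly when $3\nmid N$. For example, $(a,a)=(-2b,b)$ forces $a=b$ and $3a\equiv0$, and a similar check handles the other pairs. Since the $\omega_N^{c,d}$, $(c,d)\in I_N$, form a basis of $H^1_{\mathrm{dR}}$ by \eqref{H1dR}, nonzero classes with disjoint supports are linearly independent over $\C$, hence over $\R$. Independence in $K_4^{(3)}(X_{N,K})$ then follows because $r_B$ is linear. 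The odd case with $\e=-1$ follows from the same argument, using Theorem \ref{prop:res1} to place the elements in $K_4^{(3)}(X_N)$.

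The main obstacle is non-vanishing: we need some $c_a\neq0$, i.e.\ $F(\tfrac aN;-1)\ne F(1-\tfrac aN;-1)$ for some $a$. We would not attack the hypergeometric series directly. Instead, Theorem \ref{thm:nontrivial} (via the equivalence with $\xi_N^{1,1}(\e)$ in its proof) already shows that $r_3(2)(\xi_N^{1,1}(\e))\neq0$ for all $N\ge3$, so some $c_a\ne0$. By the transport above, the other two regulators are then non-zero as well. One point needs care: $\tau$ is defined only over $K$, so the identities must be read in $H^1(X_N(\C))$ after choosing an embedding. This is harmless, because independence over $\C$ of the regulator images suffices.
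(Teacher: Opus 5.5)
Your proposal is correct and follows essentially the paper's own argument: you transport the formula of Theorem \ref{thm:reg_hyperg} by $(\tau^{-1})^*$ and $\iota^*$ to land on the disjoint eigenspaces $(a,a)$, $(-2a,a)$, $(a,-2a)$ when $3\nmid N$, and you get non-vanishing from Theorem \ref{thm:nontrivial}. One harmless slip is that $\tau^2(x,y)=(x,\e^2 y)$, so $\tau$ is an involution only when $\e=-1$; as you note, your argument only uses that $\tau$ is invertible.
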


\begin{proof}
By Proposition \ref{cor:pb} and Theorem \ref{thm:reg_hyperg}, we have
\begin{align*}
r_3(2)(\xi_N^{-2,1})&=-\frac1{6N} \sum_{1 \leq a < N} \e^{-a} \left(  F\left(\frac{a}N; -1\right) - F\left(1-\frac{a}N; -1\right) \right)(\tau^{-1})^*\widetilde{\omega}_N^{a, a}, \\
r_3(2)(\xi_N^{1,-2})&=\frac1{6N} \sum_{1 \leq a < N} \e^{-a} \left(  F\left(\frac{a}N; -1\right) - F\left(1-\frac{a}N; -1\right) \right)(\tau^{-1}\circ \iota)^*\widetilde{\omega}_N^{a, a}.   
\end{align*}
Observe that 
\begin{equation}\label{pull}
(\tau^{-1})^*\omega^{a,a}_N=\e^a \omega_N^{N-2a,a} \qquad \text{ and } \qquad (\tau^{-1}\circ \iota)^*\omega^{a,a}_N=-\e^a \omega_N^{a,N-2a}. 
\end{equation}
By the assumption on $N$, the differential forms $\omega_N^{a, a}$, $\omega_N^{a, N-2a}$ and $\omega_N^{N-2a, a}$ are linearly independent. The non-triviality part of the statement follows from Theorem \ref{thm:nontrivial} and the second assertion follows by taking $\e=-1$.   
\end{proof}

\begin{rmk}
Alternatively, Corollary \ref{cor:indep} immediately follows from Corollary \ref{cor:pNrs} and Theorem \ref{thm:nontrivial}. 
    Note that when $N=3$, the elements $\Xi_N^{-2,1}$, $\Xi_N^{1,-2}$ and $\Xi_N^{1,1}(-1)$ all belong to the (conjecturally) $1$-dimensional group $K_4^{(3)}(X_3^{[1,1]})\simeq K_4^{(3)}(X_3)$. Note that $\omega_3^{2,2}\equiv -\omega_3^{-1,2}$ modulo exact forms by \cite[Lemma 4.19]{Ots11}. We thus find using \eqref{pull} that 
    \begin{align*}
    r_3(2)(\xi_3^{-2,1})&=-\frac1{18} \left(  F\left(\frac{1}3; -1\right) - F\left(1-\frac{1}3; -1\right) \right) \left( \frac{1}{3} B\left( \frac{1}{3}, \frac{1}{3} \right)  \right)^{-1} \omega_3^{1, 1}\\
    &\quad -\frac1{18} \left(  F\left(\frac{2}3; -1\right) - F\left(1-\frac{2}3; -1\right) \right) \left( \frac{1}{3} B\left( \frac{2}{3}, \frac{2}{3} \right)  \right)^{-1} \omega_3^{-1, 2} \\
    &\equiv -\frac1{18} \left(  F\left(\frac{1}3; -1\right) - F\left(1-\frac{1}3; -1\right) \right) \left( \frac{1}{3} B\left( \frac{1}{3}, \frac{1}{3} \right)  \right)^{-1} \omega_3^{1, 1}\\
    &\quad +\frac1{18} \left(  F\left(\frac{2}3; -1\right) - F\left(1-\frac{2}3; -1\right) \right) \left( \frac{1}{3} B\left( \frac{2}{3}, \frac{2}{3} \right)  \right)^{-1} \omega_3^{2, 2} \\
    & = r_3(2)(\xi_3^{1,1}(-1)).
    \end{align*}
    A similar calculation for $\xi_3^{1,-2}$ leads to the equalities
    \[
    r_3(2)(\xi_3^{1,1}(-1))=r_3(2)(\xi_3^{-2,1})=r_3(2)(\xi_3^{1,-2}).
    \]
    The conjectural injectivity of Beilinson's regulator (as part of Beilinson's conjectures, see Section \ref{subsec:beilinson}) would imply that $\Xi_3^{-2,1}
    =\Xi_3^{1,-2}=\Xi_3^{1,1}(-1)$ in $K_4^{(3)}(X_3)$. It would be interesting (but probably difficult) to prove these relations unconditionally. 
\end{rmk}

\section{Numerical verification of Beilinson's conjecture} \label{sec:num}

In this section, we investigate numerically Beilinson's conjecture for the $L$-values $L(X_N, 3)$ and $L(X_N^{[a,b]}, 3)$. We use for this the element $\xi_N^{-2,1}\in H^2(\Gamma(\Q(X_N),3))$ constructed in Section \ref{sec:elements}. This element has the advantage that $\Xi_N^{-2,1}=\varphi_{\mathrm{dJ}}(\xi_N^{-2,1})$ belongs to $K_4^{(3)}(X_N)$ no matter the parity of $N$ by Theorem \ref{prop:res1}.

After recalling in Section \ref{subsec:beilinson} the statement of the Beilinson conjecture, we explain in Section \ref{subsec:regnum} how to compute numerically the regulator integral
\begin{equation*}
    R_N = \int_{\gamma_N} r_3(2)(\xi_N^{-2,1}).
\end{equation*}
We then examine the relation with $L$-values in Section \ref{subsec:conjnum}, and check numerically Beilinson's conjecture in the cases $N=3,4,6$.

We assume in this section that $\z_N=e^{\frac{2\pi i}{N}}$ and $\e=e^{\frac{\pi i}{N}}$.

\subsection{Beilinson's conjecture} \label{subsec:beilinson}

Let $X$ be a smooth, projective, geometrically connected curve of genus $g$ defined over $\Q$. Consider the $L$-function $L(X,s):=L(h^1(X), s)$ associated with the compatible family of $\ell$-adic $\mathrm{Gal}(\overline{\Q}/\Q)$-representations $\{ H^1_{\mathrm{et}}(X_{\overline{\Q}}, \Q_\ell) \}_\ell$. It is given by a convergent Euler product in the region $\Re(s)> 3/2$. Conjecturally, $L(X,s)$ has an analytic continuation to $\C$ and satisfies a functional equation relating $L(X,s)$ and $L(X,2-s)$, \emph{i.e.}~centered at $s=1$ (see \cite[Section 1]{Nek94}).

For all integers $n \geq 2$, Beilinson \cite{Bei80, Nek94} has defined regulator maps 
\begin{equation}\label{rb(n)}
    r_B(n) \colon K_{2n-2}^{(n)}(X) \longrightarrow H^2_{\mathcal{D}}(X_{\mathbb{R}}, \mathbb{R}(n))
\end{equation}
from the $n$-th Adams eigenspace of the $K$-group $K_{2n-2}(X)\otimes \Q$ to the Deligne cohomology. Note that $K_{2n-2}^{(n)}(X)$ is isomorphic to the motivic cohomology group $H^2_{\mathcal{M}}(X, \Q(n))$ \cite[Section 5.1]{Nek94}.
Beilinson \cite{Bei80}, \cite[(6.1)]{Nek94} has conjectured that $r_B(n)_{\Z}\otimes \R$ \footnote{Here $r_B(n)_\Z$ is the restriction of the map $r_B(n)$ to the integral subspace $K_{2n-2}^{(n)}(X)_{\Z}$ of $K_{2n-2}^{(n)}(X)$ \cite[(6.1)]{Nek94}. In fact $K_{2n-2}^{(n)}(X)_{\Z} = K_{2n-2}^{(n)}(X)$ for $n \geq 3$, see \cite[Remarks 5.6]{DS91}.} is an isomorphism 
whose determinant (in suitable bases) equals $(2\pi i)^{-2(n-1)g} L(X,n)$ up to a non-zero $\Q$-rational factor. 

We are interested in this conjecture when $n=3$. The functional equation implies that $L(X,s)$ has a zero of order $g$ at $s=-1$ and that $L^{(g)}(X,-1)$ is a rational multiple of $\pi^{-4g} L(X,3)$. 
We will state the conjecture for $L^{(g)}(X,-1)$.
Recall from Section \ref{sec:asy} the regulator map \eqref{BeiReg}
\begin{equation*}
    r_B : K_4^{(3)}(X) \longrightarrow H^1(X(\C), \R(2))^+.
\end{equation*}
It coincides with the regulator map $r_B(3)$ \eqref{rb(n)} after identifying the relevant Deligne cohomology group with $H^1(X(\C), \R(2))^+$. Let $H_1(X(\C), \Q)^+$ denote the fixed part of the first homology group under the action of complex conjugation on $X(\C)$.

\begin{conj}[Beilinson] \label{conj:Beilinson}
    The following statements hold:
    \begin{enumerate}
        \item $K_4^{(3)}(X)$ is a $\Q$-vector space of dimension $g$.
        \item If $(\Xi_1, \ldots, \Xi_g)$ is a $\Q$-basis of $K_4^{(3)}(X)$ and $(\gamma_1, \ldots, \gamma_g)$ is a $\Q$-basis of $H_1(X(\C), \Q)^+$, then
        \begin{equation} \label{eq:Lvalue-Reg}
            L^{(g)}(X,-1) = c \pi^{-2g} \det \Bigl(\int_{\gamma_i} r_B(\Xi_j) \Bigr)_{1 \leq i,j \leq g}
        \end{equation}
        for some $c \in \Q^\times$.
    \end{enumerate}
\end{conj}

Conjecture \ref{conj:Beilinson} is currently out of reach, as we don't even know how to prove that $K_4^{(3)}(X)$ is finite dimensional as soon as $g \geq 1$. In practice, we aim for a weakened version: namely, there should exist $\Xi_1, \ldots, \Xi_g$ in $K_4^{(3)}(X)$ such that \eqref{eq:Lvalue-Reg} holds. This weak conjecture has been proved by Beilinson for modular curves \cite{Bei86}. Since elliptic curves over $\Q$ are modular, this implies the weak conjecture for all elliptic curves over $\Q$. 

Thanks to de Jeu's Theorem \ref{thm:reg_comp}, we can reformulate Conjecture \ref{conj:Beilinson} in terms of Goncharov's regulator map $r_3(2)$, simply replacing $\Xi_1, \ldots \Xi_g$ by classes $\xi_1, \ldots, \xi_g$ in $H^2(\Gamma(X,3))$ in the equation \eqref{eq:Lvalue-Reg}.

There is also an extension of Beilinson's conjectures to Chow motives, see {\it e.g.}~\cite[(6.7)]{Nek94}. For simplicity, we only state the weak version of Conjecture \ref{conj:Beilinson} for the motives $X_N^{[a,b]}$ defined in Section \ref{subsec:motives} in the case where $(a,b,N)=1$. As we now recall, the $L$-function of $X_N^{[a,b]}$ can be expressed in terms of $L$-functions of Hecke characters. For a prime $v \nmid N$ of $\Q(\zeta_N)$, let $\mathbb{F}_v$ denote the residue field at $v$, and let $\chi_v \colon \mathbb{F}_v^{\times} \to \mu_N$ be the $N$-th power residue character defined by $\chi_v(x) \equiv x^{(|\mathbb{F}_v|-1)/N} \pmod{v}$. 
Consider the {\it Jacobi sum} 
$$j_N^{a, b}(v)=-\sum_{\substack{x, y \in \mathbb{F}_v^{\times}\\x+y=1}} \chi_N^a(x) \chi_N^b(y). $$
After fixing an embedding $\mu_N\hookrightarrow \C^\times$, this defines a Hecke character. 
By \cite[Theorem 3.9]{Ots11}, the $L$-function $L(X_N^{[a, b]}, s)$, which is defined as the $L$-function of the compatible family of $l$-adic Galois representations $(p_N^{[a, b]})^*H^1_{\rm et}(X_{N,\overline{\Q}} , \Q_l)$ (for $l \nmid N$), 
agrees with the Hecke $L$-function of $j_N^{a,b}$: 
\begin{equation}\label{eq:HeckeL}
    L(X_N^{[a, b]}, s)=L(j_N^{a, b}, s).
\end{equation} 
As a corollary of this fact, $L(X_N^{[a, b]}, s)$ can be analytically continued to the whole complex plane and satisfies a functional equation relating $L(X_N^{[a, b]},s)$ and $L(X_N^{[a, b]},2-s)$ by Hecke's theory.

In accordance with Section \ref{subsec:motives}, we define the plus part of the homology of $X_N^{[a, b]}$ as follows:
$$H_1(X_N^{[a, b]}, \Q)^+:= (p_N^{[a, b]})_*H_1(X_N(\C), \Q)^+. $$
This is a $\Q$-vector space of dimension $\varphi(N)/2$ by Corollary \ref{cor:rank}. 

\begin{conj}[Beilinson]\label{conj:weakBei}
    Let $N \geq 3$, $(a,b) \in I_N$, and $k := \varphi(N)/2$. Then there exist $\Xi_1, \ldots, \Xi_{k}$ in $K_4^{(3)}(X_N^{[a,b]})$, a $\Q$-basis $(\gamma_1, \ldots, \gamma_{k})$ of $H_1(X_N^{[a,b]}, \Q)^+$, and a constant $c \in \Q^\times$ such that
    \begin{equation*} 
        L^{(k)}(X_N^{[a,b]},-1) = c \pi^{-2k} \det \Bigl(\int_{\gamma_i} r_B(\Xi_j) \Bigr)_{1 \leq i,j \leq k}.
    \end{equation*}
\end{conj}

\subsection{Numerical computation of $R_N$} \label{subsec:regnum}

Recall that 
\begin{equation*}
    R_N = \int_{\gamma_N} r_3(2)(\xi_N^{-2,1}).
\end{equation*}

It will cost no more effort to deal with the twists of $\xi_N^{-2,1}$ by $N$-th roots of unity. Let $\zeta$ be any $N$-th root of unity. 
Writing $\zeta = \zeta_N^b$, Proposition \ref{prop:trace}(i) implies $\xi_N^{-2,1}(\zeta_N^b) = (g_N^{0,b})^* \xi_N^{-2,1}$, so that $\Xi_N^{-2,1}(\zeta) \in K_4^{(3)}(X_{N,\Q(\zeta_N)})$ by Theorem \ref{prop:res1}.

By Proposition \ref{iota}(iii), we have $\tau^* \xi^{-2, 1}_N(\zeta)=-\xi^{1, 1}_N(\e \zeta)$. Therefore
\begin{equation}\label{eq:num}
    \int_{\gamma_N} r_3(2)(\xi^{-2,1}_N(\zeta)) = - \int_{\gamma_N} r_3(2)((\tau^{-1})^* \xi^{1,1}_N(\e \zeta)) = - \int_{(\tau^{-1})_* \gamma_N} r_3(2)(\xi^{1,1}_N(\e \zeta)).
\end{equation}

We now express $(\tau^{-1})_*(\gamma_N)$ in terms of $\gamma_N$.

\begin{lem} \label{lem:sumroots}
For an integer $0<j<N$, we have
\begin{equation*}
    \frac{1}{N} \sum_{k=1}^{N-1} k \zeta_N^{jk} = \frac{1}{\zeta_N^j-1}.
\end{equation*}
\end{lem}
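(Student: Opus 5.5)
The plan is to evaluate the sum in closed form by differentiating a geometric series. Set $w = \zeta_N^j$. Since $0<j<N$, we have $w\neq 1$ and $w^N = 1$. Consider the polynomial identity
\[
\sum_{k=0}^{N-1} X^k = \frac{X^N-1}{X-1}.
\]
Differentiate it and multiply by $X$:
\[
\sum_{k=1}^{N-1} k X^k = \frac{N X^{N}}{X-1} - \frac{X(X^N-1)}{(X-1)^2}.
\]
Now substitute $X=w$. The second term vanishes because $w^N=1$, and the first term becomes $N/(w-1)$. Dividing by $N$ gives
\[
\frac1N\sum_{k=1}^{N-1} k w^k = \frac{1}{w-1}.
\]

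As an alternative check that avoids derivatives, write $S=\sum_{k=1}^{N-1} k w^k$ and compute $(w-1)S$ by telescoping:
\[
(w-1)S = \sum_{k=2}^{N} (k-1) w^{k} - \sum_{k=1}^{N-1} k w^k = (N-1)w^N - \sum_{k=1}^{N-1} w^k .
\]
Since $w^N = 1$ and $\sum_{k=0}^{N-1} w^k = 0$, we get $\sum_{k=1}^{N-1} w^k = -1$. Hence $(w-1)S = (N-1)+1 = N$, and dividing by $N(w-1)$ gives the claim.

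I see no real obstacle here. The only care needed is to use the hypothesis $0<j<N$ to ensure $w\neq1$, so that the division by $w-1$ is legitimate and the geometric sum over $k=0,\dots,N-1$ vanishes.
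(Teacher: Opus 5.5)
Your proof is correct and matches the paper's argument: the paper also differentiates the geometric-series identity $(1-x)\sum_{k=0}^{N-1}x^k = 1-x^N$ and evaluates at $x=\zeta_N^j$, using $\zeta_N^j\neq 1$. Your telescoping computation of $(w-1)S$ is a valid derivative-free version of the same calculation.
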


\begin{proof}
    This follows from differentiating the identity
    \begin{equation*}
        (1-x) \sum_{k=0}^{N-1} x^k = 1-x^N
    \end{equation*}
    and evaluating at $x = \zeta_N^j$.
\end{proof}

\begin{lem} \label{lem:tau gamma}
We have
\begin{equation*}
    (\tau^{-1})_*(\gamma_N) = \sum_{k=1}^{N-1} \frac{k}{N} g^{k+1,k}(\gamma_N) - \frac{k}{N} g^{k,k}(\gamma_N).
\end{equation*}
\end{lem}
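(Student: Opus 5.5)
The plan is to compare both sides of the identity through their periods. By \eqref{H1dR}, the forms $\omega_N^{a,b}$ with $(a,b)\in I_N$ form a basis of $H^1_{\mathrm{dR}}(X_N(\C))$. The integration pairing between $H_1(X_N(\C),\C)$ and $H^1_{\mathrm{dR}}$ is perfect, so two rational homology classes coincide as soon as $\int\omega_N^{a,b}$ agrees on them for every $(a,b)\in I_N$. I therefore fix $(a,b)\in I_N$ and compute both integrals, using the normalization \eqref{eq:normint}. In the form $\int_{\gamma_N}\omega_N^{a,b}=\frac1N B\bigl(\frac{\langle a\rangle}N,\frac{\langle b\rangle}N\bigr)$, it reduces everything to an identity between Beta values and roots of unity.

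\textbf{Right-hand side.} Since $(g_N^{r,s})^*\omega_N^{a,b}=\zeta_N^{ar+bs}\omega_N^{a,b}$, we have $\int_{g_N^{r,s}\gamma_N}\omega_N^{a,b}=\zeta_N^{ar+bs}\int_{\gamma_N}\omega_N^{a,b}$. The right-hand side therefore integrates to
\[
\frac1N B\Bigl(\tfrac{\langle a\rangle}N,\tfrac{\langle b\rangle}N\Bigr)\,(\zeta_N^{a}-1)\,\frac1N\sum_{k=1}^{N-1}k\,\zeta_N^{(a+b)k}.
\]
Since $a+b\neq0$ in $\Z/N\Z$, Lemma \ref{lem:sumroots} turns this into $\frac1N B\bigl(\tfrac{\langle a\rangle}N,\tfrac{\langle b\rangle}N\bigr)\frac{\zeta_N^a-1}{\zeta_N^{a+b}-1}$.

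\textbf{Left-hand side.} We have $\int_{(\tau^{-1})_*\gamma_N}\omega_N^{a,b}=\int_{\gamma_N}(\tau^{-1})^*\omega_N^{a,b}$. From \eqref{sym:3}, $\tau^{-1}(x,y)=(1/x,\e^{-1}y/x)$, and a direct substitution in $x^ay^{b-N}\,dx/x$ using $\e^N=-1$ gives $(\tau^{-1})^*\omega_N^{a,b}=\e^{-b}\omega_N^{-a-b,b}$. I will double-check this sign and exponent against \eqref{pull}, since the final identity is sensitive to it. The left side is then $\frac{\e^{-b}}N B\bigl(\tfrac{\langle -a-b\rangle}N,\tfrac{\langle b\rangle}N\bigr)$.

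\textbf{Comparing the two.} It remains to prove, with $\alpha=\langle a\rangle/N$, $\beta=\langle b\rangle/N$ and $\e=e^{\pi i/N}$ as fixed in this section,
\[
e^{-\pi i\beta}\,B(\langle -a-b\rangle/N,\beta)=B(\alpha,\beta)\,\frac{e^{2\pi i\alpha}-1}{e^{2\pi i(\alpha+\beta)}-1}.
\]
The right factor equals $e^{-\pi i\beta}\sin(\pi\alpha)/\sin(\pi(\alpha+\beta))$.

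If $\alpha+\beta<1$, then $\langle -a-b\rangle/N=1-\alpha-\beta$. The ratio $B(1-\alpha-\beta,\beta)/B(\alpha,\beta)=\frac{\Gamma(1-\alpha-\beta)\Gamma(\alpha+\beta)}{\Gamma(\alpha)\Gamma(1-\alpha)}$ equals $\sin\pi\alpha/\sin\pi(\alpha+\beta)$ by the reflection formula, as required.

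If $\alpha+\beta>1$, then $\langle -a-b\rangle/N=2-\alpha-\beta$. Here I would apply the reflection formula to $\Gamma(2-\alpha-\beta)\Gamma(\alpha+\beta-1)$ and to $\Gamma(1-\alpha)\Gamma(\alpha)$. The functional equation $\Gamma(z+1)=z\Gamma(z)$ converts $\Gamma(2-\alpha)$ and $\Gamma(\alpha+\beta)$ into $(1-\alpha)\Gamma(1-\alpha)$ and $(\alpha+\beta-1)\Gamma(\alpha+\beta-1)$. The resulting factor $(\alpha+\beta-1)/(1-\alpha)$ must cancel, which is the thing to check.

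\textbf{Main obstacle.} I expect the bookkeeping in this last comparison to be the only real difficulty: the second case of the Beta identity, together with the exact sign and power of $\e$ in $(\tau^{-1})^*\omega_N^{a,b}$. If that case does not close directly, I would instead use the exact-form relations of \cite[Lemma 4.19]{Ots11} to rewrite $\omega_N^{-a-b,b}$ before integrating.
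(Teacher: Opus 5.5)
Your strategy is the paper's, and your case $\langle a\rangle+\langle b\rangle<N$ is exactly its computation; the paper pairs only with the holomorphic basis \eqref{Omega1} and stops there. The gap is in the case $\langle a\rangle+\langle b\rangle>N$, which you bring in by testing against all of $I_N$. There the formula $(\tau^{-1})^*\omega_N^{a,b}=\e^{-b}\omega_N^{-a-b,b}$ is false.

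Write $a,b$ for the representatives in $]0,N[$. The substitution then gives $\e^{-b}x^{N-a-b}y^{b-N}\,dx/x$, whose $x$-exponent is now negative. This equals $\e^{-b}x^{-N}\omega_N^{\langle -a-b\rangle,b}$, not $\e^{-b}\omega_N^{\langle -a-b\rangle,b}$. It has poles at the points $(0,\zeta_N^s)$ where the paths $g_N^{r,s}\delta_N$ making up $\gamma_N$ begin. So its period is not a Beta integral, and in particular not $\frac1N B(\langle -a-b\rangle/N,\beta)$.

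Accordingly, the identity you set out to check is false. Use $\Gamma(2-\alpha-\beta)=(1-\alpha-\beta)\Gamma(1-\alpha-\beta)$, $\Gamma(2-\alpha)=(1-\alpha)\Gamma(1-\alpha)$, and the same reflection computation as in your first case. This gives
\[
B(2-\alpha-\beta,\beta)=\frac{1-\alpha-\beta}{1-\alpha}\,B(\alpha,\beta)\,\frac{\sin \pi\alpha}{\sin \pi(\alpha+\beta)}.
\]
The prefactor is negative when $\alpha+\beta>1$, so the factor you hoped would cancel does not.

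There are two ways to close this.

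\textbf{Correct the pullback.} You can compute the cohomology class of the pullback correctly. Using $x^N+y^N=1$ and $d(x^cy^b)=\bigl(c\,x^cy^b-b\,x^{c+N}y^{b-N}\bigr)\,dx/x$, one gets $x^cy^{b-N}\,dx/x\equiv\frac{b+c}{c}\,x^{c+N}y^{b-N}\,dx/x$ modulo exact forms; this is what \cite[Lemma 4.19]{Ots11} encodes. Hence for $c=N-a-b$,
\[
(\tau^{-1})^*\omega_N^{a,b}\equiv \e^{-b}\,\frac{1-\alpha}{1-\alpha-\beta}\,\omega_N^{2N-a-b,b},
\]
and this extra factor exactly compensates the one displayed above. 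Your fallback points in this direction, but it is aimed at the wrong form.

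\textbf{Use only holomorphic forms.} The simpler route, which is the paper's, is to drop the non-holomorphic forms altogether. Both sides of the lemma are rational, hence real, cycles. A real cycle is determined by its periods against $H^{1,0}(X_N(\C))$, because $\int_\gamma\bar\omega=\overline{\int_\gamma\omega}$ for real $\gamma$ and $H^1(X_N(\C),\C)=H^{1,0}\oplus\overline{H^{1,0}}$. With that observation, your first case is already the complete proof.
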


\begin{proof}
To identify the cycle $(\tau^{-1})_*(\gamma_N)$, we pair it with the basis of $\Omega^1(X_N(\C))$ given by \eqref{Omega1}. For $a,b>0$, $a+b<N$, we have
\begin{equation*}
    (\tau^{-1})^* \omega_N^{a, b} = \left(\dfrac1x\right)^a \left( \dfrac{\e^{-1} y}x\right)^{b-N} \dfrac{d(1/x)}{1/x} = \e^{-b} x^{N-a-b} y^{b-N} \dfrac{dx}{x}= \e^{-b} \omega_N^{N-a-b, b}.
\end{equation*}
Therefore, for these values of $a,b$,
\begin{equation}\label{eq:tau gamma}
    \langle (\tau^{-1})_*(\gamma_N), \omega_N^{a,b} \rangle = \langle \gamma_N, (\tau^{-1})^* \omega_N^{a,b} \rangle = \langle \gamma_N, \e^{-b} \omega_N^{N-a-b,b} \rangle = \frac{\e^{-b}}{N} B\Bigl(\frac{N-a-b}{N}, \frac{b}{N} \Bigr),
\end{equation}
the last equality following from \eqref{eq:normint}. The expression with the Beta function can be transformed using the reflection formula $\Gamma(x) \Gamma(1-x)= \frac{\pi}{\sin \pi x}$ for the Gamma function:
\begin{equation}\label{eq2:tau gamma}
\begin{split}
    B\Bigl(\frac{N-a-b}{N}, \frac{b}{N} \Bigr) & = \frac{\Gamma\bigl(1-\frac{a}{N}-\frac{b}{N}\bigr) \Gamma\bigl(\frac{b}{N}\bigr)}{\Gamma\bigl(1-\frac{a}{N}\bigr)} \\
    & = \frac{\Gamma\bigl(\frac{a}{N}\bigr) \Gamma\bigl(\frac{b}{N}\bigr)}{\Gamma\bigl(\frac{a+b}{N}\bigr)} \frac{\sin\bigl(\pi \frac{a}{N}\bigr)}{\sin\bigl(\pi \frac{a+b}{N}\bigr)} \\
    & = B\Bigl(\frac{a}{N}, \frac{b}{N}\Bigr) \e^b \frac{\zeta_N^a-1}{\zeta_N^{a+b}-1}.
\end{split}
\end{equation}
By Lemma \ref{lem:sumroots}, we deduce that
\begin{equation}\label{eq3:tau gamma}
    \frac{\zeta_N^a-1}{\zeta_N^{a+b}-1} = \frac{1}{N} \sum_{k=1}^{N-1} k \zeta_N^{a(k+1)+bk} - k \zeta_N^{ak+bk}.
\end{equation}
Combining \eqref{eq:tau gamma}, \eqref{eq2:tau gamma} and \eqref{eq3:tau gamma}, we obtain
\begin{equation*}
    \langle (\tau^{-1})_*(\gamma_N), \omega_N^{a,b} \rangle = \frac{1}{N} \Bigl(\sum_{k=1}^{N-1} k \zeta_N^{a(k+1)+bk} - k \zeta_N^{ak+bk}\Bigr) \langle \gamma_N, \omega_N^{a,b} \rangle.
\end{equation*}
The result then follows from the relation $(g_N^{r,s})^* \omega_N^{a,b} = \zeta_N^{ar+bs} \omega_N^{a,b}$.
\end{proof}

Our final formula to evaluate the regulator of $\xi_N^{-2,1}(\zeta)$ reads as follows.

\begin{prop}\label{prop:regnum}
Let $\zeta$ be any $N$-th root of unity. Then
\begin{equation*}
    \int_{\gamma_N} r_3(2)(\xi^{-2,1}_N(\zeta)) = \sum_{k=1}^{N-1} \frac{k}{N} \int_{\delta_N} \Bigl(r_3(2)(\xi_N^{1,1}(\e \zeta \zeta_N^{2k})) - r_3(2)(\xi_N^{1,1}(\e \zeta \zeta_N^{2k+1})) \Bigr).
\end{equation*}
\end{prop}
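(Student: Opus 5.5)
Start from \eqref{eq:num}, which gives
\[
\int_{\gamma_N} r_3(2)(\xi^{-2,1}_N(\zeta)) = -\int_{(\tau^{-1})_*\gamma_N} r_3(2)(\xi^{1,1}_N(\e\zeta)).
\]
Then substitute the expression for $(\tau^{-1})_*\gamma_N$ from Lemma \ref{lem:tau gamma}. This reduces the integral to a $\Q$-linear combination of integrals of $r_3(2)(\xi^{1,1}_N(\e\zeta))$ over translates $g^{k+1,k}(\gamma_N)$ and $g^{k,k}(\gamma_N)$. Each such integral is moved onto $\gamma_N$ by pulling back the form, $\int_{g\gamma} \omega = \int_\gamma g^*\omega$. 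Proposition \ref{prop:trace}(i) then gives $(g_N^{a,b})^*\xi_N^{1,1}(\eta)=\xi_N^{1,1}(\eta\zeta_N^{a+b})$, together with the compatibility of $r_3(2)$ with pullbacks.

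So $g^{k+1,k}$ turns $\e\zeta$ into $\e\zeta\zeta_N^{2k+1}$, and $g^{k,k}$ turns it into $\e\zeta\zeta_N^{2k}$. This yields
\[
\int_{\gamma_N} r_3(2)(\xi^{-2,1}_N(\zeta)) = \sum_{k=1}^{N-1}\frac kN\int_{\gamma_N}\Bigl(r_3(2)(\xi_N^{1,1}(\e\zeta\zeta_N^{2k}))-r_3(2)(\xi_N^{1,1}(\e\zeta\zeta_N^{2k+1}))\Bigr).
\]
The overall sign flips because of the minus sign in \eqref{eq:num}.

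The remaining step is to replace $\gamma_N$ by $\delta_N$. Lemma \ref{lem:reg_xi1} (whose proof works for any root of unity $\eta$ with $\eta$ in place of $\e$) gives
\[
\int_{\gamma_N} r_3(2)(\xi_N^{1,1}(\eta))=\int_{\delta_N} r_3(2)(\xi_N^{1,1}(\eta))-\frac1N\sum_{r}\int_{\delta_N}r_3(2)(\xi_N^{1,1}(\eta\zeta_N^r)).
\]
The correction term does not depend on which coset representative $\eta\zeta_N^{j}$ we use, since it is a sum over the full orbit under multiplication by $\zeta_N$. Hence it is the same for $\eta=\e\zeta\zeta_N^{2k}$ and for $\eta=\e\zeta\zeta_N^{2k+1}$, and it cancels in each difference. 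This gives exactly the stated formula with $\delta_N$.

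The main point to check carefully is the bookkeeping of signs and of the pushforward versus pullback conventions. I need to make sure that $g^{a,b}$ acting on cycles corresponds to $(g^{a,b})^*$ on forms with exponent $\zeta_N^{a+b}$, not its inverse, in the same convention as Lemma \ref{lem:tau gamma}, which was derived via $(g_N^{r,s})^*\omega_N^{a,b}=\zeta_N^{ar+bs}\omega_N^{a,b}$. Once that is consistent, everything else is formal.

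One subtlety is that $\delta_N$ is a path and not a cycle, so the $\delta_N$ integrals only make sense after the cancellation. Arguing at the level of the formal expression in Lemma \ref{lem:reg_xi1} avoids this issue, since the identity there already holds for each $\eta$ individually.
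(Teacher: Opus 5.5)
Your proposal is correct and follows the paper's proof essentially step for step. You combine \eqref{eq:num} with Lemma \ref{lem:tau gamma}, move each translate $g^{k+1,k}$, $g^{k,k}$ onto the form and apply Proposition \ref{prop:trace}(i) to get the twists $\e\zeta\zeta_N^{2k+1}$ and $\e\zeta\zeta_N^{2k}$, then pass from $\gamma_N$ to $\delta_N$ via Lemma \ref{lem:reg_xi1}, where the orbit sums over $\zeta_N^r$ cancel in each difference. (No extension of Lemma \ref{lem:reg_xi1} is needed, since $(\e\zeta\zeta_N^{j})^N=\e^N=-1$; and each $\delta_N$-integral is already a well-defined convergent path integral of the explicit form \eqref{regulator}, as in Lemma \ref{lem:reg_xi2}, so your final caveat is unnecessary.)
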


\begin{proof}
Using Equation \eqref{eq:num} and Lemma \ref{lem:tau gamma}, our regulator is expressed as
\begin{align*}
    \int_{\gamma_N} r_3(2)(\xi^{-2,1}_N(\zeta)) & = - \left(\sum_{k=1}^{N-1} \frac{k}{N} \int_{\gamma_N} r_3(2)((g^{k+1,k})^* \xi_N^{1,1}(\e \zeta)) - \frac{k}{N} \int_{\gamma_N} r_3(2)((g^{k,k})^* \xi_N^{1,1}(\e \zeta)) \right) \\
    & = \sum_{k=1}^{N-1} \frac{k}{N} \int_{\gamma_N} \Bigl(r_3(2)(\xi_N^{1,1}(\e \zeta \zeta_N^{2k})) - r_3(2)(\xi_N^{1,1}(\e \zeta \zeta_N^{2k+1}) \Bigr),
\end{align*}
where for the last equality we used Proposition \ref{prop:trace}(i). The integral over $\gamma_N$ is expressed as an integral over $\delta_N$ by Lemma \ref{lem:reg_xi1}. Noting that
\begin{equation*}
\sum_{r \in \Z/N\Z} \xi_N^{1,1}(\e \zeta \zeta_N^{2k} \zeta_N^r) = \sum_{r \in \Z/N\Z} \xi_N^{1,1}(\e \zeta \zeta_N^{2k+1} \zeta_N^r),
\end{equation*}
we obtain the desired result.
\end{proof}

To compute numerically the right-hand side in Proposition \ref{prop:regnum}, we simply use Lemma \ref{lem:reg_xi2} and PARI/GP's function $\texttt{intnum}$. This gives us the following values of $R_N$ for $3 \leqslant N \leqslant 6$.
\begin{equation}\label{table}
\begin{tabular}{ c|c|c|c|c } 
 $N$ & 3 & 4 & 5 & 6 \\ 
 \hline
 $R_N$ & 7.14926\dots & 10.19559\dots & 17.68165\dots & 23.48783\dots \\ 
\end{tabular}
\end{equation}
The fact that these values are non-zero confirm that $\Xi_N^{-2,1}\neq 0$ as established in Theorem \ref{thm:nontrivial}.

For any $a,b \in \Z/N\Z$, we further explain how to compute
\begin{equation*}
    R_N^{a,b} = \int_{\gamma_N} r_3(2)(p_N^{a,b} \xi_N^{-2,1}).
\end{equation*}

\begin{prop}\label{prop:regR}
Let $a,b \in \Z/N\Z$. The following statements hold:
\begin{enumerate}
    \item If $a+2b \neq 0$, then $R_N^{a,b} = 0$.
    \item If $2b=0$, then $R_N^{0,b}=0$.
    \item If $2b \neq 0$, then
    \begin{equation*}
        R_N^{-2b, b} = - \frac{1}{N(\zeta_N^b + 1)} \sum_{u \in \Z/N\Z} \zeta_N^{-bu} \int_{\delta_N} r_3(2)(\xi_N^{1,1}(\e \zeta_N^u)).
    \end{equation*}
\end{enumerate}
\end{prop}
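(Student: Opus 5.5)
The plan is to work at the level of cocycles. We compute $p_N^{a,b}\xi_N^{-2,1}$ explicitly as a combination of twists $\xi_N^{-2,1}(\zeta_N^{u})$, and then feed each twist into Proposition \ref{prop:regnum}. Throughout, I use that $r_3(2)$ is compatible with the action of correspondences built from the automorphisms $g_N^{r,s}$, i.e.\ with pull-back by automorphisms and linear combinations.

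\emph{Part (i).} By Proposition \ref{projection} with $(r,s)=(-2,1)$, we have $\xi_N^{-2,1}=\sum p_N^{c,d}\xi_N^{-2,1}$, the sum running over $(c,d)$ with $-2d\equiv c$, i.e.\ $c+2d=0$. Applying $p_N^{a,b}$ and using the orthogonality relations \eqref{projector1}, we get $p_N^{a,b}\xi_N^{-2,1}=0$ whenever $a+2b\neq 0$. Hence $R_N^{a,b}=0$.

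\emph{Part (ii).} Here $(a,b)=(0,b)$ with $2b=0$. If $b\neq 0$, exactly one of $a,b,a+b$ vanishes, so $p_N^{0,b}=0$ by Proposition \ref{lem:mot_decomp}. If $b=0$, then $p_N^{0,0}=e^0+e^2$ cuts out $h(\mathbb{P}^1)$, so the class $p_N^{0,0}r_3(2)(\xi_N^{-2,1})$ lies in $(e^0+e^2)^*H^1(X_N(\C),\R(2))=0$. Alternatively one can invoke \eqref{eq:K4P1} together with Theorem \ref{thm:reg_comp}. Either way $R_N^{0,b}=0$.

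\emph{Part (iii).} Expand
\[
p_N^{-2b,b}\xi_N^{-2,1}=\frac1{N^2}\sum_{r,s}\zeta_N^{2br-bs}\,g_N^{r,s}\xi_N^{-2,1}.
\]
By Proposition \ref{prop:trace}(i), $g_N^{r,s}\xi_N^{-2,1}=\xi_N^{-2,1}(\zeta_N^{-2r+s})$. Substituting $u'=-2r+s$ in the inner sum over $s$ makes the $r$-dependence cancel, giving
\[
p_N^{-2b,b}\xi_N^{-2,1}=\frac1N\sum_{u'}\zeta_N^{-bu'}\,\xi_N^{-2,1}(\zeta_N^{u'}).
\]
Now integrate over $\gamma_N$ and apply Proposition \ref{prop:regnum} to each twist with $\zeta=\zeta_N^{u'}$. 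Reindex by $u=u'+2k$ in the first term and $u=u'+2k+1$ in the second. The coefficient of $\int_{\delta_N}r_3(2)(\xi_N^{1,1}(\e\zeta_N^{u}))$ then becomes
\[
\frac{\zeta_N^{-bu}}{N}\,(1-\zeta_N^{b})\,\frac1N\sum_{k=1}^{N-1}k\,\zeta_N^{2bk}.
\]
Since $2b\neq0$, Lemma \ref{lem:sumroots} evaluates the last sum as $1/(\zeta_N^{2b}-1)$. Using $(1-\zeta_N^b)/(\zeta_N^{2b}-1)=-1/(\zeta_N^b+1)$ yields exactly the claimed formula.

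The main obstacle is the bookkeeping in (iii): keeping the signs of the exponents straight in Proposition \ref{prop:trace}(i) and in the two reindexings. One must also make sure that the two shifted sums combine cleanly modulo $N$. This works because $u$ ranges over all of $\Z/N\Z$ for each fixed $k$, so the reindexing is a bijection.
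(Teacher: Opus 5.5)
Your arguments for (i) and (iii) are the paper's. For (i), you use orthogonality of the projectors together with Proposition \ref{projection}. For (iii), you reduce to $p_N^{-2b,b}\xi_N^{-2,1}=\frac1N\sum_{u'}\zeta_N^{-bu'}\xi_N^{-2,1}(\zeta_N^{u'})$, apply Proposition \ref{prop:regnum}, perform the two reindexings (your coefficient is exactly \eqref{eq:regR}), and finish with Lemma \ref{lem:sumroots}.

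For (ii) you take a genuinely different route. The paper stays inside \eqref{eq:regR}. For $b=0$ the prefactor $1-\zeta_N^b$ vanishes. For $b=N/2$ it splits the $u$-sum by parity and uses Proposition \ref{dist:cocycle} to rewrite both halves as $\pi_{N,2}^*\xi_2^{1,1}(\pm i)$. It then concludes from $|1-i\sqrt{t(1-t)}|=|1+i\sqrt{t(1-t)}|$, so the vanishing is checked by hand, already at the level of the $\delta_N$-integrals.

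You argue instead that the projector kills $H^1$. This is correct, but state the mechanism. The identity $p_N^{0,b}=0$ holds in $\mathrm{Corr}^0(X_N,X_N)$, i.e. modulo rational equivalence. It does not hold in the group ring through which $p_N^{0,b}$ acts on $B_2(F)\otimes F^\times_{\Q}$: there $p_N^{0,N/2}\xi_N^{-2,1}=\frac1N\sum_t(-1)^t\xi_N^{-2,1}(\zeta_N^t)$, which there is no reason to expect to be zero. The vanishing appears only after applying $r_3(2)$, for three reasons:
\begin{itemize}
\item since $\xi_N^{-2,1}$ has trivial residues (proof of Theorem \ref{prop:res1}), $r_3(2)(\xi_N^{-2,1})$ is a class in $H^1(X_N(\C),\R(2))$;
\item the explicit formula \eqref{regulator} is natural under pull-back by the $g_N^{r,s}$;
\item the action of $p$ on cohomology depends only on its class.
\end{itemize}

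More directly, the computation in the proof of Lemma \ref{lem:proj_1form} is valid for every $(a,b)\in G_N$. It shows that $p_N^{a,b}$ kills every $\omega_N^{c,d}$ with $(c,d)\in I_N$ whenever $(a,b)\notin I_N$. By \eqref{H1dR}, this handles $b=0$ and $b=N/2$ at once, without invoking Proposition \ref{lem:mot_decomp}.

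Your route is shorter and more general: it gives $\int_\gamma r_3(2)(p_N^{a,b}\xi)=0$ for all $(a,b)\notin I_N$, all cycles $\gamma$, and all cocycles with trivial residues. The paper's route is a self-contained check of \eqref{eq:regR} that uses only the explicit integrals.
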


\begin{proof}
    If $a+2b \neq 0$ then the projector $p_N^{a,b}$ is orthogonal to every projector $p_N^{a',b'}$ with $a'+2b'=0$. Proposition \ref{projection} then implies that $p_N^{a,b} \xi_N^{-2,1}=0$, which proves (i).
    
    Assume now $a+2b=0$. Using the definition of $p_N^{a,b}$ (Definition \ref{def:pNab}) and Proposition \ref{prop:trace}(i), we compute
    \begin{equation*}
        R_N^{a,b} = \frac{1}{N^2} \sum_{r,s \in \Z/N\Z} \zeta_N^{-ar-bs} \int_{\gamma_N} r_3(2)(\xi_N^{-2,1}(\zeta_N^{-2r+s})).
    \end{equation*}
    Making the change of indices $s \mapsto t = s-2r$, we deduce
    \begin{align*}
        R_N^{a,b} & = \frac{1}{N^2} \Bigl(\sum_{r \in \Z/N\Z} \zeta_N^{-(a+2b)r} \Bigr) \sum_{t \in \Z/N\Z} \zeta_N^{-bt} \int_{\gamma_N} r_3(2)(\xi_N^{-2,1}(\zeta_N^t)) \\
        & = \frac{1}{N} \sum_{t \in \Z/N\Z} \zeta_N^{-bt} \int_{\gamma_N} r_3(2)(\xi_N^{-2,1}(\zeta_N^t)).
    \end{align*}
    Applying Proposition \ref{prop:regnum}, we get
    \begin{align*}
        R_N^{a,b} & = \frac{1}{N} \sum_{t \in \Z/N\Z} \zeta_N^{-bt} \sum_{k=1}^{N-1} \frac{k}{N} \int_{\delta_N} r_3(2)(\xi_N^{1,1}(\e \zeta_N^{t+2k})) - r_3(2)(\xi_N^{1,1}(\e \zeta_N^{t+2k+1})).
    \end{align*}
    Splitting the integral and making the change of indices $u = t+2k$, respectively $u = t+2k+1$, we obtain
    \begin{align}
    \nonumber
    R_N^{a,b} & = \frac{1}{N} \sum_{k=1}^{N-1} \frac{k}{N} \sum_{u \in \Z/N\Z} \bigl(\zeta_N^{-b(u-2k)} - \zeta_N^{-b(u-2k-1)}\bigr) \int_{\delta_N} r_3(2)(\xi_N^{1,1}(\e \zeta_N^u)) \\
    \label{eq:regR}
    & = \frac{1-\zeta_N^b}{N} \Bigl(\sum_{k=1}^{N-1} \frac{k}{N} \zeta_N^{2bk} \Bigr) \sum_{u \in \Z/N\Z} \zeta_N^{-bu} \int_{\delta_N} r_3(2)(\xi_N^{1,1}(\e \zeta_N^u)).
    \end{align}
    In case (iii) we conclude by applying Lemma \ref{lem:sumroots}.
    
    It remains to prove (ii). If $b=0$, the expression \eqref{eq:regR} shows that $R_N^{0,0}=0$. Assume $b \neq 0$, so that $N$ is even and $b=N/2$. Write $N=2M$. Note that $\z_{M}=e^{\frac{2\pi i}{M}}=\z_N^2$.
    Splitting the sum \eqref{eq:regR} according to the parity of $u$, it suffices to show that 
    \begin{equation}\label{eq:sumvM}
    \sum_{v\in \Z/M\Z} \int_{\delta_N} r_3(2)(\xi^{1,1}_N(\e \zeta_N^{2v}))=\sum_{v\in \Z/M\Z} \int_{\delta_N} r_3(2)(\xi^{1,1}_N(\e \zeta_N^{2v+1})).
    \end{equation}
    By Proposition \ref{dist:cocycle}, we have 
    \[
    \sum_{v\in \Z/M\Z} \xi_N^{1,1}(\e\z_N^{2v})=\sum_{v\in \Z/M\Z} \xi_N^{1,1}(\e\z_M^v)=\pi_{N,2}^*(\xi_{2}^{1,1}(\e^M))
    \]
    on the one hand and
    \[
    \sum_{v\in \Z/M\Z} \xi_N^{1,1}(\e\z_N^{2v+1})=\sum_{v\in \Z/M\Z} \xi_N^{1,1}(\e\z_M^v \z_N)=\pi_{N,2}^*(\xi_{2}^{1,1}(\e^M \z_N^M))
    \]
    on the other hand.
    Observe that $\e^M=i$ and $\z_N^M=-1$.
    We are thus trying to show that 
    \[
    \int_{\delta_N} r_3(2)(\pi_{N,2}^*(\xi_{2}^{1,1}(i)))=\int_{\delta_N} r_3(2)(\pi_{N,2}^*(\xi_{2}^{1,1}(-i))).
    \]
    This equality holds since the regulator map $r_3(2)$ is defined as a real part of some differential form. 
    
    For the benefit of the reader, we include the details. According to Lemma \ref{lem:reg_xi2}, given any root of unity $\z$, we have
    \begin{align*}
        \int_{\delta_N} r_3(2)(\xi_N^{1,1}(\zeta)) = \int_0^1 \rho_{N,\z}^1 + \rho_{N,\z}^2,
    \end{align*}
    where
    \begin{align*}
        \rho_{N,\z}^1 & = - \frac13 \log \bigl|1-\zeta (t(1-t))^{1/N}\bigr| \Bigl(-\frac{\log(1-t)}{t} - \frac{\log t}{1-t}\Bigr) dt, \\
        \rho_{N,\z}^2 & = \frac13 \log t \Bigl(-\log \bigl|1-\zeta (t(1-t))^{1/N}\bigr| \dlog(t(1-t)) + \log(t(1-t)) \dlog \bigl|1-\zeta (t(1-t))^{1/N}\bigr| \Bigr).
    \end{align*}
    Note that 
    \begin{multline*}
\sum_{v\in \Z/M\Z} \log \bigl|1-\e \zeta_N^{2v} (t(1-t))^{1/N}\bigr| = \log \left\vert \prod_{v\in \Z/M\Z} \Big( 1- \zeta_M^{v} \e(t(1-t))^{1/(2M)}\Big)  \right\vert \\  = \log \left\vert 1- \e^M \sqrt{t(1-t)}  \right\vert  = \log \left\vert 1- i \sqrt{t(1-t)}  \right\vert
\end{multline*}
and 
\begin{multline*}
\sum_{v\in \Z/M\Z} \log \bigl|1-\e \zeta_N^{2v+1} (t(1-t))^{1/N}\bigr| = \log \left\vert \prod_{v\in \Z/M\Z} \Big( 1- \zeta_M^{v} \e \z_N (t(1-t))^{1/(2M)}\Big)  \right\vert \\  = \log \left\vert 1- \e^M \z_N^M \sqrt{t(1-t)}  \right\vert  = \log \left\vert 1 + i \sqrt{t(1-t)}  \right\vert.
\end{multline*}
The desired equality \eqref{eq:sumvM} then follows from the equality
\[
\log \left\vert 1- i \sqrt{t(1-t)}  \right\vert = \log \left\vert 1 + i \sqrt{t(1-t)}  \right\vert. \qedhere
\]
\end{proof}

\subsection{Checking the conjecture} \label{subsec:conjnum}

The aim of this section is to prove Theorem \ref{thm7}.
The element $\Xi_N^{-2,1}$ lives in a specific part of the motivic cohomology of $X_N$. Indeed, by Proposition \ref{projection} and functoriality of $\varphi_{\mathrm{dJ}}$, we have in $K_4^{(3)}(X_N) \otimes \C$
\begin{equation*}
    \Xi_N^{-2,1} = \sum_{b=0}^{N-1} p_N^{-2b,b} \Xi_N^{-2,1}.
\end{equation*}
The most interesting part of the decomposition above comes from the terms $b \in (\Z/N\Z)^\times$, contributing to the element $p_N^{[-2,1]}(\Xi_N^{-2,1})$ of $K_4^{(3)}(X_N^{[-2,1]})$. The other terms come from a Fermat curve of lower degree. Namely, let $d$ be a positive divisor of $N$. Then
\begin{equation*}
    \sum_{\substack{b = 0 \\ (b,N)=d}}^{N-1} p_N^{-2b,b} = p_N^{[-2d, d]}
\end{equation*}
and the motive $X_N^{[-2d,d]}$ is isomorphic to $X_{N/d}^{[-2,1]}$ (see \cite[(2.3)]{Ots15}). This motive is trivial for $d=N$, as well as for $d=N/2$ in the case $N$ is even, since the Fermat curves $X_1$ and $X_2$ have genus $0$.

    For the numerical verification, we focus on those motives $X_N^{[-2,1]}$ that have rank $2$ (see \eqref{list:rank2} where all rank $2$ motives of the form $X_N^{[a,b]}$ are listed). 
    Then there exist elliptic curves $E_{N}$ over $\Q$ with CM by $\Q(\z_N)$ such that $X_N^{[-2,1]}\simeq h^1(E_N)$ as motives with $\Q$-coefficients. 
    In order to compute the $L$-functions in the relevant cases for us, we will give explicit decriptions of $E_3$, $E_4$, and $E_6$ below (up to isogeny, which is enough to compute the $L$-functions). Alternatively, we could use \eqref{eq:HeckeL} to compute them as Hecke $L$-functions (using existing implementations in MAGMA \cite{Magma} for example). The latter approach is better suited for higher rank cases.

\begin{prop} \label{elliptic_curve_isom}    
    There exist isogenies defined over $\Q$: 
    \begin{align*}
        & E_3 \sim 27a1  : v_0^2w_0=u_0^3-432 w_0^3,\\
        & E_4 \sim 32a2  : v_0^2w_0=u_0^3 - u_0w_0^2,\\
        & E_6 \sim 108a1  : v_0^2w_0= u_0^3+4w_0^3, \\
    \end{align*}
    where we use the labels from Cremona's tables of elliptic curves \cite{Cre97}.
\end{prop}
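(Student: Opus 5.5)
The plan is to realize $E_N$, for $N\in\{3,4,6\}$, as the Jacobian of an explicit genus one quotient of $X_N$ through which $p_N^{[-2,1]}$ factors. Then I will bring that curve to Weierstrass form over $\Q$ and match it with the listed Cremona curve, using conductor and $j$-invariant (all three are CM curves with $j=0$ or $1728$) plus a few Frobenius traces. By Faltings' isogeny theorem, or simply because two isogenous CM curves over $\Q$ share conductor and $L$-function, it suffices to identify $h^1$ of the quotient with $X_N^{[-2,1]}$ and then compare $L$-functions. Via \eqref{eq:HeckeL}, that comparison reduces to comparing the Hecke characters $j_N^{-2,1}$ with those attached to the curves.

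\emph{Choosing the quotient.} The orbit $[-2,1]$ for $N=3$ is $[1,1]$. For $N=4$ it is $[2,1]$, which is $\sim_4[1,2]$, and for $N=6$ it is $[4,1]=[-2,1]$. I will use the cyclic quotient $C_N^{r,s}$ from \eqref{map:quot_rs} with $(r,s)\equiv(-2,1)$ modulo $N$. By the genus formula, $C_3^{1,1}$, $C_4^{2,1}$ and $C_6^{4,1}$ each have genus $1$. The exponents are chosen so that $\gcd(N,r,s)=1$, and the choice is made up to $\sim_N$. The invariant holomorphic forms computed before Remark \ref{rmk:Cmot} are then exactly the $\omega_N^{a,b}$ with $(a,b)$ in the relevant orbit and $\langle a\rangle+\langle b\rangle<N$. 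This shows $h^1(C_N^{r,s})\simeq X_N^{[-2,1]}$, in the manner of Remark \ref{rmk:Cmot} and \cite[Remark 2.13]{Ots11}. I expect the case $N=6$, which is not prime, to need an explicit check that the pullback $(\pi_N^{r,s})^*$ is an isomorphism onto $p_N^{[-2,1]}$, using Lemma \ref{lem:proj_1form} on de Rham cohomology.

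\emph{Weierstrass models.}
\begin{itemize}
\item For $N=3$: the curve $v^3=u(1-u)$ is a plane cubic, namely the Fermat cubic $X_3$ itself. It is classically isomorphic over $\Q$ to $y^2=x^3-432$, which is 27a1.
\item For $N=4$: $v^4=u^2(1-u)$. I will set $w=v^2/u$, so $w^2=1-u$ and $v^2=uw$. This makes the curve birational to $v^2=w(1-w^2)$. After $w\mapsto -w$ it becomes $v^2=w^3-w$, which is 32a2.
\item For $N=6$: $v^6=u^4(1-u)$. Substitutions of the form $v\mapsto v^a u^b$ should lead to a model $Y^2=X^3+c$ with $c$ a power of $2$ up to sixth powers. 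I will then check directly that it is $\Q$-isomorphic, or else $\Q$-isogenous, to $y^2=x^3+4$.
\end{itemize}

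\emph{Main obstacle.} Twists are the hard part, especially for $N=6$. $j=0$ curves have sextic twists, so the naive model may be the wrong twist. I will pin down the twist by counting points modulo a few small primes $p\equiv1\pmod 6$ on both the quotient curve and 108a1, and checking that the traces agree. Agreement of conductor and of enough traces then gives isogeny over $\Q$, by Faltings together with Sturm-type bounds on the associated weight-$2$ newforms.
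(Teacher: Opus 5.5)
Your treatment of $N=4$ is correct: $w=v^2/u$ turns $C_4^{2,1}$ into $v^2=w^3-w$, i.e.\ 32a2. Your $N=3$ case has a slip. Over $\Q$, $v^3=u(1-u)$ is not the Fermat cubic: $Y=4(2u-1)$, $X=-4v$ give $Y^2=X^3+16$, which is 27a3 and only $3$-isogenous to 27a1. This is harmless for an isogeny statement, and you could simply take $E_3=X_3$, since $h^1(X_3)=X_3^{[1,1]}$. (The paper just cites Otsubo for these two cases.)

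The genuine gap is $N=6$. The genus formula gives $g(C_6^{4,1})=\bigl(8-(\gcd(6,4)+\gcd(6,1)+\gcd(6,5))\bigr)/2=2$, not $1$. Correspondingly, the $G_6^{4,1}$-invariant holomorphic forms are the $\omega_6^{a,b}$ with $a\equiv 4b \bmod 6$ and $\langle a\rangle+\langle b\rangle<6$, namely $\omega_6^{4,1}$ \emph{and} $\omega_6^{2,2}$. So $h^1(C_6^{4,1})\simeq X_6^{[-2,1]}\oplus X_6^{[2,2]}$, where the second summand is the old part $X_6^{[-4,2]}\simeq X_3^{[-2,1]}$. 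Your claimed isomorphism $h^1(C_6^{4,1})\simeq X_6^{[-2,1]}$ is therefore false. No substitution $v\mapsto v^au^b$ can bring $v^6=u^4(1-u)$ to a model $Y^2=X^3+c$, and the sextic twist is not the real obstacle. Choosing another cyclic quotient does not help either: invariance of $\omega_6^{-2t,t}$ under $G_6^{r,s}$ forces $r\equiv -2s$, and then $\gcd(6,r,s)=1$ forces $s\in(\Z/6\Z)^\times$, which always gives this same genus-$2$ curve.

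What is missing is a further quotient by an involution defined over $\Q$; the genus-$2$ curve is bielliptic. In the coordinates $u=x^6$, $v=x^4y$ of $C_6^{4,1}$, put $X=v/u=y/x^2$ and $Y=2/u-1$; this gives the model $Y^2=1+4X^6$. Its quotient by $X\mapsto -X$ is $Y^2=1+4Z^3$, again 27a3 (the old part). Its quotient by $(X,Y)\mapsto(-X,-Y)$, with coordinates $T=X^{-2}=x^4/y^2$ and $W=YX^{-3}=y^3+y^{-3}$, is $W^2=T^3+4$, which is 108a1. The composite $(x,y)\mapsto(x^4/y^2,\,y^3+y^{-3})$ is exactly the paper's map $p\colon X_6\to E$. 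The paper then checks that $p^*(du/2v)=\omega_6^{4,1}$ and $p^*(u\,du/2v)=\omega_6^{8,-1}\equiv-2\omega_6^{2,5}$ modulo exact forms. Hence $p^*H^1_{\rm dR}(E)=H^1_{\rm dR}(X_6^{[-2,1]})$ and $X_6^{[-2,1]}\simeq h^1(E)$, with no twist determination, point counts, or Faltings argument needed.

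Your fallback of matching $L(j_6^{-2,1},s)$ with the $L$-function of 108a1 via Faltings could in principle pin down the isogeny class without a model. However, it requires an effective bound you do not provide, and your plan as written rests on the false genus-$1$ claim.
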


\begin{proof}
The first two cases are well-known (see \cite[Sections 5.2 and 5.3]{Ots11_2}), so we will prove the last case. Let $E$ be the elliptic curve given by the affine equation $v^2=u^3+4$.
Consider the morphism
\begin{align*}
& p\colon X_6 \longrightarrow E, \qquad  (x,y) \longmapsto \left(\frac{x^4}{y^2}, y^3+\frac{1}{y^3}\right).
\end{align*}
Observe that
$$p^*\left(\dfrac{du}{2v}\right) = - y^1 x^{-2} \frac{dy}{y} =  \omega_6^{4,1} \qquad \text{and} \qquad p^*\left(\dfrac{u \, du}{2v}\right) = -y^{-1} x^2 \frac{dy}{y} = \omega_6^{8, -1} \equiv -2\omega_6^{2, 5},$$
where we used \cite[Lemma 4.19]{Ots11}. By \eqref{basiseq}, we see that $p^*(H^1_{\rm dR}(E))= H^1_{\rm dR}(X_6^{[-2, 1]})$. Since $p$ is defined over $\Q$, this implies that it induces an isomorphism $X_6^{[-2,1]}\simeq h^1(E)$.
\end{proof}

\subsubsection{Case $N=3$}

In this case $X_3$ is isomorphic to the elliptic curve 27a1 and isogenous to the elliptic curve $E_{3}$ by Proposition \ref{elliptic_curve_isom}. Using the functions \texttt{lfun} and \texttt{lindep} of PARI/GP \cite{PARI}, we find the relation
\begin{equation*}
    R_3 \stackrel{?}{=} -\frac45 \pi^2 L'(E_{3},-1).
\end{equation*}
Here $\stackrel{?}{=}$ means an equality up to at least 35 digits. Taking into account the factor $\frac12$ in de Jeu's Theorem \ref{thm:reg_comp}, this confirms numerically Beilinson's conjecture for $L'(E_{3},-1)$ and proves the first case of Theorem \ref{thm7}.

\begin{rmk}\label{rmk:jeuX3}
    In \cite[p.~19]{JeuSlides}, de Jeu considers the elliptic curve $E \colon u(1-u)=(-v)^3$ (Cremona label 27a3), which is isomorphic to $C^{1,1}_3$ over $\Q$ and $3$-isogenous to the elliptic curve $X_3$ (Cremona label 27a1) over $\Q$. An explicit $3$-isogeny $\psi \colon X_3 \to E$ is given by $(x,y)\mapsto (u,v)=(x^3, -xy)$. De Jeu produces an element in $K_4^{(3)}(E)$ by constructing an explicit degree $2$ cocycle in $\widetilde{\mathcal{M}}_{(3)}^{\bullet}(\Q(E))$. Translated into the Goncharov complex, the cocycle constructed by de Jeu is:
\[
\beta:=\{ u \}_2 \otimes (1-v)-3\{ v \}_2 \otimes u\in B_2(\Q(E))\otimes \Q(E)^\times_{\Q}.
\]
Pulling it back to $X_3$ via $\psi$ recovers our cocycle
\[
\xi_3^{1,1}(-1)=\{ x^3 \}_2 \otimes (1+xy)-3\{ -xy \}_2 \otimes x^3 \in B_2(\Q(X_3))\otimes \Q(X_3)^\times_{\Q},
\]
and this is the observation that originally inspired our construction in Section \ref{s:2coc}.

De Jeu integrates $r_B(\varphi_{\rm dJ}(\beta))$ over a $\Z$-basis of $H_1({E(\C), \Z})^+$. 
By \cite[Corollary 2.3]{Ots}, $\gamma_3$ is not integral but we have
$3 \gamma_3 \in H_1(X_3(\C), \Z)$. This element gives the desired $\Z$-basis of $H_1({E(\C), \Z})^+$ after applying $\psi_*$. In fact, it can be shown that $$\{(\pi_3^{1,1})_* (3\gamma_3), \zeta_3\cdot (\pi_3^{1,1})_*(3\gamma_3)\}$$ is a symplectic basis of $H_1(C_3^{1,1}(\C), \Z)$ as we now explain. Here, $\zeta_3\cdot (\pi_3^{1,1})_*(3\gamma_3)$ is the cycle induced by the action $\zeta_3 \cdot (u, v):=(u, \zeta_3 v)$.  
For $\gamma$, $\gamma' \in H_1(X_3(\C), \Z)$, let $\gamma \sharp \gamma' \in \Z$ denote the intersection number.  
Note that $X_3$ is generically Galois over $C_3^{1,1}$ and $\operatorname{Gal}(X_3/C_3^{1,1}) =\langle g_3^{1, 2}\rangle$. 
Therefore we have 
\begin{align} \label{int1}
(\pi_3^{1,1})_*(3\gamma_3) \sharp (\zeta_3 \cdot (\pi_3^{1,1})_*(3\gamma_3) )&= 9\sum_{r \in \Z/3\Z}(g_3^{r, 2r} \gamma_3) \sharp (g_3^{0, 1} \gamma_3) 
\end{align}
by the projection formula. 
It follows from \cite[Corollary 3.4]{Ots} that the right hand side of \eqref{int1} is equal to $1$, 
which means that 
$\{(\pi_3^{1,1})_* (3\gamma_3), \zeta_3\cdot (\pi_3^{1,1})_*(3\gamma_3)\}$ is a symplectic basis of $H_1(C_3^{1,1}(\C), \Z)$. 
Therefore, de Jeu's computation shows that
$$(2\pi i)^{-2} \int_{\psi_*(3 \gamma_3)} r_B(\varphi_{\rm dJ}(\beta))\overset{?}{=} \frac{6}5 L'(E, -1), $$
hence 
$$\int_{\gamma_3} r_B(\varphi_{\rm dJ}(\xi_3^{1,1}(-1)))=\int_{\psi_*(\gamma_3)} r_B(\varphi_{\rm dJ}(\beta))\overset{?}{=}-\frac{8}5 \pi^2 L'(E, -1). $$
This matches our numerical calculation of $R_3$ by Theorem \ref{thm:reg_comp}, which is valid up to sign.   
\end{rmk}

\subsubsection{Case $N=4$}

By the above discussion $\xi_4^{-2,1} = p_4^{[-2,1]} \xi_4^{-2,1}$. The motive $X_4^{[-2,1]}$ has rank $2$ by Corollary \ref{cor:rank} and $L(X_4^{[-2,1]},s)=L(E_4, s)$ by Proposition \ref{elliptic_curve_isom}. We therefore expect a relation between $R_4$ and $L'(E_{4}, -1)$, and indeed we find, using PARI/GP's functions \texttt{lfun} and \texttt{lindep}, that
\begin{equation*}
    R_4 \stackrel{?}{=} -\frac45 \pi^2 L'(E_{4},-1),
\end{equation*}
thereby proving the second case of Theorem \ref{thm7}.

\subsubsection{Case $N=6$}

In this case $\xi_6^{-2,1} = \bigl(p_6^{[-2,1]} + p_6^{[-4,2]}\bigr)(\xi_6^{-2,1})$. The motive $X_6^{[-2,1]}$ has rank $2$ by Corollary \ref{cor:rank} and $L(X_6^{[-2,1]},s)=L(E_6, s)$ by Proposition \ref{elliptic_curve_isom}. The old part $X_6^{[-4,2]} \simeq X_3^{[-2,1]}$ is isomorphic to $h^1(E_{3})$ as in the case $N=3$. In accordance with this decomposition of $\xi_6^{-2,1}$, we find that
\begin{equation*}
    R_6 \stackrel{?}{=} -\frac2{15} \pi^2 L'(E_{6},-1) - \frac25 \pi^2 L'(E_{3},-1).
\end{equation*}

We can be more precise, and check numerically Conjecture \ref{conj:weakBei} for $E_{6}$. 
To this end, we use Proposition \ref{prop:regR}(iii) in the case $N = 6$ and $(a,b)= \pm (-2,1)$ to find that
\begin{equation*}
    \int_{\gamma_6} r_3(2)(p_6^{[-2,1]} \xi_6^{-2,1}) = R_6^{-2,1} + R_6^{2,-1} = 19.9132\dots \stackrel{?}{=} - \frac2{15} \pi^2 L'(E_{6},-1),
\end{equation*}
which completes the proof of Theorem \ref{thm7}. 

\begin{rmk}We end the paper with a few of remarks.
\begin{itemize}  
    \item In the case $N=5$, the rank of the motive $X_5^{[-2,1]}$ is $4$ by Corollary \ref{cor:rank}. Therefore checking Beilinson's conjecture requires two linearly independent elements of $K_4^{(3)}(X_5^{[-2,1]})$, and we only constructed one element. By Remark \ref{rmk:Cmot} (valid for $N$ prime), we have $X_5^{[-2,1]}\simeq h^1(C_5^{1,3})$. Here, $C_5^{1,3}$ is the cyclic Fermat quotient $v^5=u(1-u)^3$ of genus $2$. In particular, it is hyperelliptic. Since its Jacobian $J^{1,3}_5$ is absolutely simple (see \cite{koblitzrohrlich} for an easy criterion for simplicity), we cannot hope to construct another element using endomorphisms. 
    \item In \cite[Theorem 7.2]{Bru20}, the first-named author constructed elements $\xi_1(a, b)\in K_4^{(3)}(X_1(N))$ for $(a,b) \in G_N$. These can be used to define elements of $K_4^{(3)}(E)$ for all elliptic curves $E$ over $\Q$ via modular parametrizations. An interesting but difficult question would be to relate these elements to ours for the elliptic curves $E_3$, $E_4$ and $E_6$. 
\item When the motives $X_N^{[a,b]}$ have rank $2$ (listed in \eqref{list:rank2}), Conjecture \ref{conj:weakBei} is known due to work of Deninger \cite{Deninger90}. This situation is similar to the one encountered by Otsubo in the setting of $K_2$ of Fermat curves. Indeed, he makes a short remark \cite[Remark 4.18]{Ots11} stating that the surjectivity of the regulator map $r_B(2)$ for these rank $2$ motives was already known. Our computations in Section \ref{subsec:conjnum} numerically recover Deninger's results for the motives $X^{[1,1]}_3$, $X_4^{[2,1]}$, and $X^{[4,1]}_6$ using a different construction of $K_4$ elements.
It would be interesting (but probably difficult) to compare Deninger's construction with ours. 
\end{itemize}
\end{rmk}

\end{document}